\newtheorem{thm}{Theorem}[section]
\newtheorem{cor}[thm]{Corollary}
\newtheorem{lem}[thm]{Lemma}
\newtheorem{prp}[thm]{Proposition}
\theoremstyle{definition}
\newtheorem{exa}[thm]{Example}
\newtheorem{rem}[thm]{Remark}
\newtheorem{defn}{Definition}[section]
\newcommand{\scr}[1]{\mathscr #1}
\numberwithin{equation}{section} \theoremstyle{remark}
\newcommand{\ua}{\uparrow}\def\lll{\lambda}
\title{{\bf Markov Processes and  Stochastic Extrinsic Derivative Flows
		on the Space of  Absolutely Continuous Measures}
\footnote{ Feng-Yu Wang is supported in part by the National Key R\& D Program of China (No. 2022YFA1006000, 2020YFA0712900) and NNSFC (11921001).  Panpan Ren is supported by NNSFC (12301180) and Research Centre for Nonlinear Analysis at Hong Kong PolyU.  Simon Wittmann is supported by Research Centre for Nonlinear Analysis at Hong Kong PolyU.
The authors thank Professor Tong Yang for his valuable comments, suggestions and support.}
}
 \author{{\bf  Panpan Ren$^{b)}$,  Feng-Yu Wang$^{a)}$,  Simon Wittmann$^{c)}$}\\
\footnotesize{ $^{a)}$ Center for Applied Mathematics, Tianjin University, Tianjin 300072, China}\\
\footnotesize{$^{b)}$ Department of Mathematics, City University of  Hong Kong, Tat Chee Avenue, Hong Kong,  China}\\
	\footnotesize{ $^{c)}$ Department of Mathematics,
	The Hong Kong Polytechnic University,  Yuk Chio Road, Hong Kong,  China}\\
\footnotesize{    panparen@cityu.edu.hk, wangfy@tju.edu.cn, simon.wittmann@polyu.edu.hk}}
\date{}
\begin{document}

\allowdisplaybreaks
\def\eins{\boldsymbol 1}
\def\R{\mathbb R}  \def\ff{\frac} \def\ss{\sqrt} \def\B{\mathbf B}
\def\N{\mathbb N} \def\kk{\kappa} \def\m{{\bf m}}
\def\ee{\varepsilon}\def\ddd{D^*}
\def\dd{\delta} \def\DD{\Delta} \def\vv{\varepsilon} \def\rr{\rho}
\def\<{\langle} \def\>{\rangle}
\def\nn{\nabla} \def\pp{\partial} \def\E{\mathbb E}
\def\d{\,\text{\rm{d}}} \def\bb{\beta} \def\aa{\alpha} \def\D{\scr D}
\def\si{\sigma} \def\ess{\text{\rm{ess}}}\def\s{{\bf s}}
\def\F{\scr F}
\def\e{\text{\rm{e}}} \def\ua{\underline a} \def\OO{\Omega}  \def\oo{\omega}
\def\tt{\widetilde}\def\[{\lfloor} \def\]{\rfloor}
\def\ti{\tilde}
\def\cut{\text{\rm{cut}}} \def\P{\mathbb P} \def\ifn{I_n(f^{\bigotimes n})}
\def\C{\scr C}      \def\aaa{\mathbf{r}}     \def\r{r}
\def\gap{\text{\rm{gap}}} \def\prr{\pi_{{\bf m},\varrho}}  \def\r{\mathbf r}
\def\Z{\mathbb Z} \def\vrr{\varrho}
\def\ac{\ll}
\def\lam{\lambda}
\def\L{\scr L}\def\Tt{\tt} \def\TT{\tt}\def\II{\mathbb I}
\def\M{\mathbb M}\def\Q{\mathbb Q} \def\texto{\text{o}} \def\LL{\Lambda}
\def\to{\rightarrow}\def\l{\ell}\def\iint{\int}\def\gg{\gamma}
\def\EE{\scr E} \def\W{\mathbb W}
\def\A{\scr A} \def\Lip{{\rm Lip}}\def\S{\mathbb S}
\def\B{\scr B}\def\Ent{{\rm Ent}} \def\i{{\rm i}}\def\itparallel{{\it\parallel}}
\def\g{{\mathbf g}}\def\Sect{{\mathcal Sec}}\def\T{\mathbb T}\def\BB{{\bf B}}
\def\f{\mathbf f} \def\g{\mathbf g}\def\BL{{\bf L}}  \def\BG{{\mathbb G}}
\def\Bd{{D^E}} \def\BdP{D^E_\phi} \def\Bdd{{\bf \dd}} \def\Bs{{\bf s}} \def\GA{\scr A}
\def\Bg{{\bf g}}  \def\Bdd{\psi_B} \def\supp{{\rm supp}}\def\div{{\rm div}}
\def\ddiv{{\rm div}}\def\osc{{\bf osc}}\def\1{{\bf 1}}\def\BD{\mathbb D}\def\GG{\Gamma}
\def\H{\mathbb H} \def\beg{\begin} \def\Ric{{\rm Ric}}
\def\beq{\beg{equation}}
\def\De{D^{\textit{E}}}
\def\Det{{\tilde D}^{\textit{E}}}
\def\Ce{{C}^{\textit{E},1}}
\def\mac{{\mathbb M^{\textnormal{ac}}}}
\def\pac{{\scr P^{\textnormal{ac}}}}
\def\cyl{\scr X}\def\vrr{\varrho}
 \def\lll{\lambda}
\def\sgn{\textnormal{sign}}
\def\en{\textnormal{Ent}}

\maketitle

\begin{abstract}
Let $\mac$ (resp.~$\pac$)  be the class    of  finite (resp.~probability) measures   absolutely continuous with respect to   a $\sigma$-finite  Radon measure   on a Polish space. We present a  criterion on the quasi-regularity of Dirichlet forms on $\mac$ and $\pac$ respectively,
in terms of  upper bound conditions given by  the uniform $(L^1+L^\infty)$-norm  of the extrinsic derivative,
see Theorem \ref{thm:QR}.
As applications, we construct a class   of general type Markov processes on $\mac$ and $\pac$ via   quasi-regular Dirichlet forms containing  the diffusion, jump and killing  terms,   see   Corollary \ref{3.2}.
Moreover,   stochastic extrinsic derivative flows  on $\mac$ and $\pac$  are studied by using quasi-regular Dirichlet forms,  which  in particular provide  martingale solutions to
  SDEs on these two spaces, with drifts given by the   extrinsic derivative of entropy functionals,  see Example \ref{exa:entropy} and Example \ref{4.9}.    \end{abstract}

\vskip 2cm

\tableofcontents

\section{Introduction}

To develop stochastic analysis on the space of measures, it is crucial to construct reasonable measure-valued   processes which may play the role of Brownian motion to drive  SDEs.
By constructing (quasi-)regular local Dirichlet forms in the $L^2$-space of  reference probability measures supported on singular distributions,
several type measure-valued diffusion processes have been constructed in the literature, see  \cite{KLV, ORS, RW20, Sch, Shao, Sturm, ST} and references therein.

Recently, O-U type Dirichlet forms was  introduced  in \cite{RW22} by using the integral of intrinsic derivative with respect to Gaussian type measures on $2$-Wasserstein space,
and in our previous joint work \cite{RWW24} we proved the quasi-regularity with  extensions to $p$-Wasserstein spaces  over Banach spaces, for $p\ge 1$.
From analysis point of view, it is natural to ask for diffusion processes on the space of absolutely continuous distributions, and   to  solve SDEs
for   density functions driven by these processes as noises.

In this paper, we fix a reference measure  $\lll$  on the base space, and  link the Hilbert space $L^2(\lll)$ to  the space of finite (respectively, probability) measures   by using the map
\beq\label{PSI} L^2(\lll)\ni f\mapsto   f^2\lll\, \Big(\text{respectively, } \
\ff{f^2\lll}{\lll(f^2)}\  \text{for}\  f\ne 0\Big), \end{equation}
where and in the following,  $f^2\lll$ is the   measure  with $\ff{\d(f^2\lll)}{\d\lll}= f^2.$

   It turns out that under this map   the Fr\'echet  derivative in $L^2(\lll)$ corresponds  to the extrinsic derivative of measures introduced in \cite{ORS}. This observation indicates  us to study quasi-regular Dirichlet forms and   SDEs
on the space of absolutely continuous measures,  by using  the extrinsic derivative, as a counterpart of \cite{RWW24} where the intrinsic derivative is adopted to construct quasi-regular Dirichlet forms on  Wasserstein spaces.

\

Let $(M,\rho)$ be a Polish space,
let $\M$ be the set of finite Borel measures on    $M$, and let
 $\scr P $ be the class of Borel probability measures on $M$. Both $\M$ and $\scr P$ are complete under the total variation distance
 $$\rr_{var}(\mu,\nu):= |\mu-\nu|(M),$$
 where $|\mu-\nu|:=(\mu-\nu)^++(\mu-\nu)^-$ is the total variation of $\mu-\nu.$

 For a fixed reference measure $0\ne \lam $ on $M$, which is assumed to be $\sigma$-finite and Radon,
 we denote
 $$h_\mu:=\ff{\d\mu}{\d\lam},\ \ \mu\in \M,\ \mu\ll\lam.$$
We aim to develop stochastic analysis on the space of absolutely continuous finite measures
 $$\mac:=\big\{\mu\in \M:\ \mu\ll \lam\big\},$$
 as well as the space of absolutely continuous probability measures
$$\pac:=\big\{\mu\in \scr P:\ \mu\ll \lam\big\}.$$
Both $\mac$ and $\pac$ are Polish under the total variation distance,  which  is reformulated as
\beq\label{VAR} \rr_{var}(\mu_1,\mu_2):= \lam(|h_{\mu_1}- h_{\mu_2}|),\ \ \mu_1,\mu_2\in \mac {\rm \ or\ }\pac,\end{equation}
and $\pac$ is a closed subspace of $\mac$.  Here and throughout the paper, we simply denote $$\lll(f)=\int_M f\d\lll$$ for   a measurable function $f$ such that the integral exists.

We now  recall the   (convexity)  extrinsic derivative  for functions of measures, which goes  back to \cite{ORS} where   the space of discrete measures is concerned.

 	\begin{itemize}
 		\item A continuous function $u:\M\to \R$ is called extrinsically differentiable  with extrinsic derivative $\De u$, if
 		\begin{equation*}
 			\De u(\mu)(x):=\lim_{\varepsilon\downarrow 0}\ff{u(\mu+\varepsilon\delta_x)-u(\mu)}{\varepsilon}\in\R
 		\end{equation*}
 		exists for all $(\mu,x)\in\M\times M$, where $\dd_x$ is the Dirac measure at $x$. We write $u\in C^{E,1}(\M)$, if additionally $\De u$ is continuous on $\M\times M$. The
 		set $\Ce_b(\M)$ comprises all $u\in C^{E,1}(\M)$ with
 		\begin{equation*}
 			\sup_{(\mu,x)\in\M\times M}\big\{|u(\mu)|+|\De u(\mu)(x)|\big\}<\infty.
 		\end{equation*}
 		\item A continuous function $u:\scr P\to \R$ is called extrinsically differentiable on $\scr P$ with convexity extrinsic derivative $\Det u$, if
 		\begin{equation*}
 			\Det u(\mu)(x):=\lim_{\varepsilon\downarrow 0}\ff{u((1-\varepsilon)\mu+\varepsilon\delta_x)-u(\mu)}{\varepsilon}\in\R
 		\end{equation*}
 		exists for all $(\mu,x)\in\scr P\times M$. We write $u\in \Ce_b(\scr P)$, if additionally $\Det u$ is continuous on $\scr P\times M$ with
 		\begin{equation*}
 			\sup_{(\mu,x)\in\scr P\times M}\big\{|u(\mu)|+|\Det u(\mu)(x)|\big\}<\infty.
 		\end{equation*}
 \item $\De$ and  $\Det$ are called,  respectively, the extrinsic derivative operator and the convexity extrinsic derivative operator.
 	\end{itemize}
Regarding a finite measure as the distribution of a particle system,  the (convexity) extrinsic derivative describes   the  rate   giving birth a small weighted particle at point $x$ (meanwhile killing the same weight uniformly from the system to keep the total mass),  while  the intrinsic derivative  introduced in \cite{AKR} refers to the rate of particles moving along vector fields.
The reader shall be referred to \cite{BRW}, \cite{RW21} and the references therein for applications of the intrinsic, respectively extrinsic, derivative and links of different  derivatives for functions of measures.

\

The remainder of this paper is organized as follows.

In Section 2, we establish a general criterion
on the quasi-regularity of Dirichlet forms on $\mac$ and  $\pac$    using    upper bound
conditions given by  the uniform $(L^1+L^\infty)$-norm of the (convexity) extrinsic derivative. Since the Dirac measure $\dd_x$ appearing to the definition of    extrinsic derivative  is beyond
$\mac$,  to make analysis on $\mac$ and $\pac$ with this derivative  we will choose   reasonable  classes of functions defined on the larger spaces $\M$ and $\scr P$, for instance the class of cylindrical functions, see \eqref{CL} below, or the $C_b^{E,1}$ class introduced above.

Using our new quasi-regularity  criterion,    we construct in Section 3
   a large class of Markov processes on the space of absolutely continuous measures,  and solve in
    Section 4      the following SDEs on $\mac$ and $\pac$:
$$\d \mu_t= -\De \bb(\mu_t) \d t + \d R_t,$$
where $\bb$ is a real function on $\M$ (respectively, $\scr P$), and the noise $R_t$ is an Ornstein-Uhlenbeck (O-U for short) type process on
$\mac$ (respectively, $\pac$).  Let $\LL$ be the Gaussian type invariant probability   measure of  $R_t$,
the semigroup of $\mu_t$ is symmetric with respect to the weighted Gaussian measure (or Gibbs measure with Hamiltonian $\bb$)  formulated as
$$\d\LL^\beta(\mu):=\ff{\e^{-\bb(\mu)} \d \LL(\mu)}{\LL(\e^{-\bb})}.$$
 Of particular interest is the entropy extrinsic derivative flows addressed in Example \ref{exa:entropy} and Example \ref{4.9},
 where the drift
    is the (weak)  extrinsic derivative   of the entropy functional
 $$\bb(\mu)=   \lll(h_\mu\ln h_\mu)-\mu(M).$$ Noting that this  function
   is not extrinsically differentiable  by the above definition, but the weak extrinsic derivative can be defined as an element in the Sobolev space $\D(\De)$, where $(\De,\D(\De))$ is the closure of
   $(\De, C_b^{E,1}(M))$ in the $L^2(\LL)$,    see the explanation after  \eqref{eq:gradFl}.

\section{A criterion for quasi-regularity}

We first recall the definition of quasi-regular Dirichlet form, which can be found in e.g. \cite{MR92}, then establish a   quasi-regularity criterion for   Dirichlet forms on $L^2(\mac,\LL)$ or $L^2(\pac,\LL)$,   for a reference probability measure $\LL$ on $\mac$ or $\pac$.

\subsection{Quasi-regular Dirichlet form}

Let $(E,\rr_E)$ be a Polish (or slightly more general, Lusin) space,  and let $\LL$ be a  probability measure on the Borel $\si$-algebra $\scr B(E)$.

 A   Dirichlet form  $(\EE,\D(\EE))$ on $L^2(E,\LL)$  is a densely defined   bilinear form having  the following properties (see   \cite[Definition I.4.5]{MR92}):
\beg{enumerate} \item[$\bullet$]  Positivity: $\EE(u,u)\ge 0, u\in \D(\EE),$
 \item[$\bullet$]  Coercivity:  there exists a constant $K>0$ such that
$$|\EE_1(u,v)|:= \big|\EE(u,v)+\LL(u,v)|\le K \ss{\EE_1(u,u)\EE_1(v,v)},\ \ u,v\in \D(\EE),$$
  \item[$\bullet$]  Markov property:    if $u\in \D(\EE),$ then $1\land u^+\in \D(\EE)$ and
  \beq\label{*}  \EE(u+u^+\land 1, u-u^+\land 1) \ge 0,\ \ \EE(u-u^+\land 1, u+u^+\land 1) \ge 0.\end{equation}
  \end{enumerate}
  In this paper, we only consider symmetric Dirichlet forms, i.e.
  $$\EE(u,v)=\EE(v,u),\ \ u,v\in \D(\EE).$$
In this case,    \eqref{*}   is equivalent to $\EE(u^+\land 1 )\le \EE( u),$  where we simply denote $\EE(u)=\EE(u,u),$
 and    $\D(\EE)$ is a separable Hilbert space with the inner product $\EE_1$, so that the $\EE_1$-norm
\begin{equation*}
	\EE_1(u)^{\ff{1}{2}}:=\big(\LL(u^2)+\EE(u,u)\big)^{\ff{1}{2}},\quad u\in\D(\EE)
\end{equation*} is complete.

We repeat some potential theoretic concepts for Dirichlet forms, found in \cite[Chapt.~III]{MR92}, and we always assume $\eins_E\in\D(\EE)$.
For further reading on the terminology and statements regarding the local property of Dirichlet forms we refer to \cite[Chapt.~V]{MR92} and \cite[Chapt.~I]{BH91}.
For an open set $O\subseteq E$ the $1$-capacity associated to $\EE$ is defined as
\begin{equation*}
	{\rm Cap}_1(O):=\inf \big\{\EE_1 (u,u)\,:\, u\in \D(\EE),\, u(z)\ge 1\text{ for }\LL\text{-a.e.~}z\in O\big\}
\end{equation*}
with the convention of $\inf(\emptyset):=\infty$. For an arbitrary set $A\subseteq E$, let
$${\rm Cap}_1(A):= \inf \big\{{\rm Cap}_1(O)\,:\, A\subseteq O,\, O \  \text{is\ an open set in }  E\big\}.   $$
An $\EE$-nest  (or nest for short) is a sequence of closed subsets $\{K_n\}_{n\in\N}$ of $E$ such that
$$\lim_{n\to\infty}{\rm Cap}_1(E\setminus K_n)= 0.$$
A measurable function $u: E \to\R$ is called quasi-continuous,  if there exists a nest $\{K_n\}_{n\in\N}$ such that the restriction $u|_{K_n}$ is continuous for each $n\in\N$.

A sequence $\{u_k\}_{k\in\N}$ of measurable functions is said to converge quasi-uniformly to a function $u: E\to\R$,  if there exists a nest $\{K_n\}_{n\in\N}$ such that the sequence of restricted functions $u_k|_{K_n}$, $k\in\N$, converge to $u|_{K_n}$ uniformly on $K_n$ as $k\to\infty$ for each $n\in\N$.

If a property, which an element $z\in E$ either has or doesn't, holds for all $z$ in the complement of a set $N\subseteq E$ with ${\rm Cap}_1(N)=0$, then this property is said to hold quasi-everywhere (q.-e.) on $E$.

For any measurable function $u$ on $E$, the smallest closed subset
$$\supp[u]:=\supp[|u|\LL]$$ of $E$ such that $(|u|\LL)(E\setminus\supp[u])=0,$
 where  $|u|\LL$ is the measure on $E$ with density $|u|$ with respect to $\LL$,
 is well-defined since $E$ is Polish and in particular strongly Lindelöf.

\begin{defn}
	\begin{itemize}
	\item[$(1)$] A  Dirichlet form $(\EE,\D(\EE))$ in $L^2(E,\LL)$ is called  local,  if
	\begin{equation}\label{eq:localPr}
		\EE(u,v)=0\quad\text{for }u,v\in\D(\EE)\ \text{with}\ \supp[u]\cap\supp[v]=\emptyset.
	\end{equation}
	\item[$(2)$] A continuous bilinear map $\Gamma:\D(\EE)\times\D(\EE)\to L^1(E,\LL)$ is called the square-field operator of $(\EE,\D(\EE))$, if
	$$\EE(u,v)=\int_E \GG(u,v)\d\LL,\ \ u,v\in \D(\EE),$$
	\begin{equation}\label{eq:squareF}
		\EE(uv,u)-\frac{1}{2}\EE(v,u^2)=\int_E v\Gamma(u,u)\d\LL,\quad u,v\in\D(\EE)\cap L^\infty(E,\LL).
	\end{equation}
	\end{itemize}
\end{defn}

To verify the local property or the existence of a square-field operator, it is enough to confirm \eqref{eq:localPr} respectively \eqref{eq:squareF} on a dense subset of $(\D(\EE),\EE_1^{1/2})$.

\beg{defn}[Definition IV.3.1 in \cite{MR92}]\label{def:QR} The  Dirichlet form $(\EE,\D(\EE))$ is called quasi-regular, if the following three conditions hold.
\beg{enumerate}
\item[Q1)]  There exists an  $\EE$-nest of compact sets (i.e.~the $1$-capacity is tight).
\item[Q2)]  $(\D(\EE),\EE_1^{1/2})$ has a dense subspace consisting of quasi-continuous functions.
\item[Q3)] There exists $N\subseteq M$ with ${\rm Cap}_1(N)=0$  and a
sequence $\{u_n\}_{n\ge 1} \subseteq \D(\EE)$ of   quasi-continuous functions  which
separate points in $E\setminus N$, i.e.~for any two different points $z_1,z_2\in E\setminus N$ there exists $n\in\N$ such that $u_n(z_1)\neq u_n(z_2)$.
\end{enumerate}\end{defn}

Note that    $\EE(u):=\EE(u,u)$ for  $u\in\D(\EE)$  can be extended to all $u\in L^2(E,\LL)$ by letting
\begin{equation*}
	\EE(u)  := \infty,\quad  \ \  u\in L^2(E,\LL)\setminus (\D(\EE).
\end{equation*}

In \cite{Fu} a correspondence between regular Dirichlet forms and strong Markov processes is built,
see  \cite{FOT11} for a complete theory and more references. This is extended in \cite{AM}, \cite{MR92} to the quasi-regular setting on a Lusin space $(E,\scr B(E))$. It was observed in  \cite{CMR} that a quasi-regular Dirichlet form $(\EE,\D(\EE))$ becomes regular under one-point compactification.

According to  \cite[Definitions IV.1.8,  IV.1.13,  V.1.10]{MR92},   a standard Markov process $$\mathbf M=(\Omega,\scr F,(X_t)_{t\geq 0},(\P_z)_{z\in E} )$$  with natural filtration $\{\scr F_t\}_{t\geq 0}$ is called a non-terminating diffusion process on $E$ if
$$\P_z\big(X_\cdot\in C([0,\infty),E)\big)=1\quad\text{for } z\in E.$$
It is called $\LL$-tight for a $\sigma$-finite measure $\LL$ if there exists a sequence $\{K_n\}_{n}$  of compact sets in $E$ such that stopping times $$\tau_n:=\inf\{t\ge 0: X_t\notin K_n\},\ \ n\in\N$$ satisfy
$$\P_z\big(\lim_{n\to\infty} \tau_n=\infty\big)=1\quad\text{for }\LL\text{-a.e.~}z\in E.$$
The diffusion process is called properly associated with a Dirichlet form $(\EE,\D(\EE))$ in $L^2(E,\LL)$, if for any bounded measurable function $u:E\to\R$ and $t> 0$,
\begin{equation}\label{eq:sg}
E\ni z\mapsto\E_z[u(X_t)]:=\int_{\Omega}u(X_t)\,\d\P_z
\end{equation}
is a quasi-continuous $\LL$-version of $P_t u$, where $(P_t)_{t\ge 0}$ is the associated Markov semigroup in $L^2(E,\LL)$ associated with $(\EE,\D(\EE))$.

\beg{defn}[Extension of the generator domain]
Let $(\EE,\D(\EE))$ be a Dirichlet form in $L^2(E,\LL)$.
The generator $(L,\D(L))$ of $(\EE,\D(\EE))$ extends to all
\begin{equation}\label{eq:extGen}
	u\in\D(\EE):\quad\exists f\in L^1(E,\LL):\quad -\EE(v,u)=\LL(vf)\quad \forall v\in\D(\EE)\cap L^\infty(E,\LL)
\end{equation}
by $Lu:=f$.
The linear space defined by \eqref{eq:extGen} is called the extended generator domain $\D_e(L)$ of $\EE$.
\end{defn}

Every $u\in\D(\EE)$ admits a quasi-continuous modification $\tilde u:E\to\R$ and defines a continuous additive functional of the process ${(X_t)}_{t\ge 0}$ via
$A^{[u]}_t:= \tilde u(X_t)- \tilde u(X_0)$, $t\ge 0$.
This and the existence of a decomposition
\begin{equation}\label{eq:fuku}
	A^{[u]}_t=M^{[u]}_t+N^{[u]}_t,\quad t\ge 0,
\end{equation}
into a martingale additive functional ${(M^{[u]}_t)}_{t\ge 0}$ of finite energy and an additive functional of zero energy  ${(N^{[u]}_t)}_{t\ge 0}$ is a consequence of
\cite[Thm.~5.2.2]{FOT11} and the method of regularization (see \cite[Chapt.~VI, in part.~Thm.~VI.2.5]{MR92}, \cite{CMR} and also \cite[Sect.~4]{Eb}).

\begin{cor}\label{cor:C1} Let $(\EE,\D(\EE))$ be a local quasi-regular   Dirichlet form  in $L^2(E,\LL)$ and $(L,\D(L))$ be its generator.
	We assume conservativity, i.e.~$\EE(\eins_E,\eins_E)=0$.
	 It holds:
	\beg{enumerate}
	\item[$(1)$] 	There exists a non-terminating diffusion process $\mathbf M=(\Omega,\scr F,(X_t)_{t\geq 0},(\P_z)_{z\in E} )$ on $E$ which is  properly associated with $(\EE,\D(\EE))$.  In particular,  $\LL$ is an invariant probability measure of  $\mathbf M$.
	\item[$(2)$] $\mathbf M$  solves the martingale problem for $(L,\D_e(L))$, i.e.~for $u\in\D_e(L)$, the additive functional
	\begin{equation*}
		 \tilde u (X_t)- \tilde u(X_0)-\int_0^t Lu(X_s)\d s,\quad t\geq 0,
	\end{equation*}
	is a $\{\scr F_t\}_{t}$-martingale under $\P_z$ for q.-e.~$z\in E$.
\end{enumerate}
\end{cor}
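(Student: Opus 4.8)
The plan is to obtain both assertions from the general correspondence between quasi-regular Dirichlet forms and Markov processes in \cite{MR92} and \cite{FOT11}, using locality and conservativity to fix the qualitative features of the associated process. For part $(1)$ I would first invoke the association theorem of the theory (\cite[Chapt.~IV]{MR92}): since $(\EE,\D(\EE))$ is quasi-regular on the Polish (in particular Lusin) space $E$, there exists an $\LL$-tight special standard process $\mathbf M=(\Omega,\scr F,(X_t)_{t\ge 0},(\P_z)_{z\in E})$ properly associated with $(\EE,\D(\EE))$, so that \eqref{eq:sg} holds. Next I would use the equivalence between the local property of the form and the sample-path continuity of the associated process (\cite[Chapt.~V]{MR92}): the assumed locality \eqref{eq:localPr} forces $\mathbf M$ to be a diffusion up to its lifetime. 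To upgrade this to a non-terminating diffusion I would exploit conservativity: the hypothesis $\EE(\eins_E,\eins_E)=0$ together with the standing assumption $\eins_E\in\D(\EE)$ gives $P_t\eins_E=\eins_E$ for all $t$, which is exactly the statement that the lifetime is infinite q.-e., so that $\P_z(X_\cdot\in C([0,\infty),E))=1$ holds after passing to the properly associated process. Finally, invariance of $\LL$ is a short computation from symmetry and conservativity: for bounded $u$,
\[
\LL(P_t u)=\int_E (P_t u)\,\eins_E\,\d\LL=\int_E u\,(P_t\eins_E)\,\d\LL=\LL(u),
\]
and since $\LL$ is a probability measure it is an invariant probability measure of $\mathbf M$.

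For part $(2)$ the starting point is the Fukushima decomposition \eqref{eq:fuku}, which is available for every $u\in\D(\EE)$ and in particular for $u\in\D_e(L)\subseteq\D(\EE)$: one has $A^{[u]}_t=M^{[u]}_t+N^{[u]}_t$, with $M^{[u]}$ a martingale additive functional of finite energy and $N^{[u]}$ of zero energy. The finite-energy martingale additive functional $M^{[u]}$ is, under $\P_z$ for q.-e.~$z$, a genuine $\{\scr F_t\}$-martingale, so the whole claim reduces to identifying the zero-energy part as
\[
N^{[u]}_t=\int_0^t Lu(X_s)\,\d s,\qquad t\ge 0.
\]
Once this is established, $\tilde u(X_t)-\tilde u(X_0)-\int_0^t Lu(X_s)\,\d s=M^{[u]}_t$ is a martingale under $\P_z$ for q.-e.~$z$, which is exactly the assertion.

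To carry out the identification I would use the Revuz correspondence. Writing $f:=Lu\in L^1(E,\LL)$ and splitting $f=f^+-f^-$, the measures $f^\pm\,\LL$ are ($\sigma$-finite) smooth measures and correspond to positive continuous additive functionals $A^{f^\pm}$, so that $t\mapsto\int_0^t f(X_s)\,\d s=A^{f^+}_t-A^{f^-}_t$ is a continuous additive functional of bounded variation, hence of zero energy. It then remains to show that this additive functional coincides with $N^{[u]}$, which I would verify by comparing their associated signed Revuz measures: the defining identity \eqref{eq:extGen}, namely $-\EE(v,u)=\LL(vf)$ for all $v\in\D(\EE)\cap L^\infty(E,\LL)$, is precisely the weak integration-by-parts characterization stating that the Revuz measure of $N^{[u]}$ equals $f\,\LL$, and this determines $N^{[u]}$ uniquely among continuous additive functionals of bounded variation.

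The main obstacle I anticipate is in part $(2)$, specifically the passage to the extended generator domain $\D_e(L)$, where $Lu$ is only assumed to lie in $L^1(E,\LL)$ rather than $L^2$. In the $L^2$ case $u\in\D(L)$ the identification $N^{[u]}_t=\int_0^t Lu(X_s)\,\d s$ is classical (\cite[Thm.~5.2.4]{FOT11} together with the regularization of \cite[Chapt.~VI]{MR92}); the real work is to ensure that the Revuz correspondence and the resulting additive functional remain meaningful and finite under the weaker $L^1$-integrability, and that the identification of the zero-energy part survives the weakening. I would handle this either by approximating $u$ in the $\EE_1$-norm by elements $u_n\in\D(L)$ with $Lu_n\to f$ controlled in $L^1$ and passing to the limit in the decomposition, or, more directly, by appealing to the Revuz uniqueness theorem for $\sigma$-finite smooth measures, which applies verbatim under the $L^1$-hypothesis.
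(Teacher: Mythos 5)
Your proposal is correct, and for part $(1)$ it is essentially the paper's argument: the paper simply cites \cite[Thm.~V.1.11]{MR92}, which packages exactly the chain you spell out (quasi-regularity yields an $\LL$-tight special standard process, locality upgrades it to a diffusion, and conservativity --- via $|\EE(\eins_E,v)|\le\EE(\eins_E,\eins_E)^{1/2}\EE(v,v)^{1/2}=0$ for all $v\in\D(\EE)$, hence $P_t\eins_E=\eins_E$ --- removes the killing); your symmetry computation for the invariance of $\LL$ is the standard one. For part $(2)$, however, you take a genuinely different route. The paper extends the semigroup \eqref{eq:sg} to a strongly continuous contraction semigroup on $L^1(E,\LL)$ (this is where conservativity and invariance of $\LL$ enter), observes that the $L^1$-generator agrees with $(L,\D_e(L))$ on $\D(\EE)$, and then identifies $\int_0^t Lu(X_s)\d s$ with the zero-energy part $N^{[u]}$ of \eqref{eq:fuku} by combining \cite[Thm.~5.2.4]{FOT11} with the identity $\E_z\big[\int_0^t Lu(X_s)\d s\big]=\E_z[\tilde u(X_t)]-\tilde u(z)$ for $\LL$-a.e.~$z$. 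You instead go through the Revuz correspondence: since $\LL$ charges no set of zero capacity, $f^\pm\LL$ are smooth measures, $\int_0^t f(X_s)\d s$ is the CAF with signed Revuz measure $f\LL$, and the defining relation \eqref{eq:extGen} is precisely the weak equation in the characterization of bounded-variation zero-energy parts (\cite[Thm.~5.4.2]{FOT11}, transferred from the regular to the quasi-regular setting by the same regularization method the paper invokes for \eqref{eq:fuku} itself). Both bridges across the $L^1$-versus-$L^2$ gap are legitimate; yours is more flexible (it would equally handle compensators given by smooth measures singular with respect to $\LL$), while the paper's is shorter here because the $L^1$-semigroup comes for free once $\LL$ is an invariant probability measure.

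One logical wrinkle you should repair: the step ``continuous additive functional of bounded variation, hence of zero energy'' is not a valid implication in general. For the trivial (local, quasi-regular, conservative) form $\EE\equiv 0$ on $L^2(E,\LL)$ one has $X_t\equiv X_0$, and for $f\in L^1(E,\LL)\setminus L^2(E,\LL)$ the bounded-variation CAF $\int_0^t f(X_s)\d s=tf(X_0)$ satisfies $\E_\LL[(tf(X_0))^2]=\infty$ for every $t>0$, so it has no finite energy at all. (There, of course, $\D_e(L)$ consists only of $u$ with $Lu=0$, which is exactly the point: it is the hypothesis $u\in\D_e(L)$, fed into the characterization theorem, that yields the zero-energy property of $\int_0^t Lu(X_s)\d s$; it is not a consequence of bounded variation per se.) Since your identification ultimately rests on precisely that uniqueness/characterization statement --- your second proposed remedy --- this is an ordering slip in the exposition rather than a genuine gap; your first remedy (approximating by $u_n\in\D(L)$ with $Lu_n\to f$ in $L^1$) also works, but requires the same care when passing to the limit in the energy estimates.
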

\begin{proof}
	$(1)$ The first statement follows from \cite[Thm.~V.1.11]{MR92}.
	
	$(2)$ The map in \eqref{eq:sg} uniquely determines  a strongly continuous contraction semigroup with parameter $t\ge 0$ in $L^1(E,\LL)$.
	Furthermore, the generator $(L^{(1)},\D(L^{(1)}))$ in $L^1(E,\LL)$ of the extended semigroup is an extension of $(L,\D_e(L))$, more precisely
	$$\D(L^{(1)})\cap \D(\EE)=\D_e(\EE).$$
	Now, we assume $u\in\D_e(L)$. Then $Lu\in \D_e(\EE)$ and
	the additive functional $t\mapsto \int_0^t {Lu}(X_s)\d s$
	coincides with ${(N^{[u]}_t)}_{t\ge 0}$ because of \cite[Thm.~5.2.4]{FOT11} and
	\begin{equation*}
		\E_z\Big[\int_0^t {Lu}(X_s)\d s\Big]=\E_z[\tilde u(X_t)]-\tilde u(z),\quad \LL\text{-a.e.~}z\in E.
	\end{equation*}
\end{proof}

\subsection{A quasi-regularity criterion with extrinsic derivative }

We now consider $(E,\rr_E)=  (\mac,\rho_{var})$ with $\rho_{var}$ as in \eqref{VAR}, and let $\LL$ be a Borel probability measure on $\mac$.
Given a measurable function $u$ on $E$ with $\LL(u^2)<\infty$, we simply denote by $u$ the corresponding element   in $L^2(E,\LL)$, which is the class of functions $\LL$-a.e.  equal to $u$.

 We will present   a handy criterion for the quasi-regularity of a Dirichlet form $(\EE,\D(\EE))$ in $L^2(\mac,\LL)$, which
  is formulated in terms of upper bounds for $\EE(\cdot,\cdot)$ by uniform norms of extrinsic derivatives, for the class of cylinder functions
 \begin{equation}\label{CL}
 	\scr X:=\big\{\M\ni \mu\mapsto g(\mu(f_1),\cdots, \mu(f_n)):n\in\N,\, f_i\in C_b(M),\, g\in C_b^1(\R^n)\big\}.
 \end{equation}

 Clearly, for such a cylindrical function $u$,  we have $u\in C^{E,1}(\M)$ with
 \begin{equation}\label{eq:extDer}
 	\De u(\mu)(x):= \sum_{i=1}^n (\pp_i g) (\mu(f_1),\cdots, \mu(f_n))  f_i(x),
 	\quad  (\mu,x)\in\M\times M,
 \end{equation}
 and $u\in C_b^{E,1}(\scr P)$ with
 \begin{equation}\label{eq:extDer2}
 	\Det u(\mu)(x):= \sum_{i=1}^n (\pp_i g) (\mu(f_1),\cdots, \mu(f_n))  (f_i(x)-\mu(f_i)),
 	\quad  (\mu,x)\in\scr P\times M.
 \end{equation}


\begin{thm}\label{thm:QR}  A Dirichlet form  $(\EE,\D(\EE))$ in $L^2(\mac,\LL)$ is quasi-regular, if $(\D(\EE),\EE^{1/2}_1)$ has a dense subspace consisting of  quasi-continuous functions and additionally one of the following holds true:
\begin{itemize}
	\item[$(1)$] There exists   a constant $C\in (0,\infty)$ such that
\begin{equation*}
	\EE(u,u)\leq C\Big(\sup_{\mu\in\mac}\|\De u(\mu)\|^2_{L^1(\mu)+L^\infty(\mu)}  + \| u\|_\infty^2 \Big),  \quad u\in \cyl.
\end{equation*}
\item[$(2)$] $\LL(\pac)=1$ and there exists a constant $C\in (0,\infty)$ such that
\begin{equation*}
	\EE(u,u)\leq C\Big(\sup_{\mu\in\pac}\|\Det u(\mu)\|^2_{L^\infty(\mu)}+ \| u\|_\infty^2\Big),\quad u\in \cyl.
\end{equation*}
In this case, $(\EE,\D(\EE))$ is also quasi-regular as a Dirichlet form in  $L^2(\pac,\LL)$.
\end{itemize}
\end{thm}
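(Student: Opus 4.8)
The plan is to verify the three conditions Q1)--Q3) of Definition \ref{def:QR}. Since the density of a subspace of quasi-continuous functions is assumed, Q2) is free and the task reduces to producing a tight $\EE$-nest of compacta (Q1) and a countable family of quasi-continuous functions in $\D(\EE)$ separating points (Q3). The common starting point, in either case (1) or (2), is the observation that the stated energy bound forces $\cyl\subseteq\D(\EE)$: for $u\in\cyl$ the extrinsic derivative is given by \eqref{eq:extDer} (resp.\ \eqref{eq:extDer2}), and since $g\in C_b^1(\R^n)$ and $f_i\in C_b(M)$ the quantities $\|\De u(\mu)\|_{L^1(\mu)+L^\infty(\mu)}\le\sum_i\|\pp_i g\|_\infty\|f_i\|_\infty$ and $\|u\|_\infty$ are finite, so the right-hand side of the bound is finite, $\EE(u)<\infty$, and hence $u\in\D(\EE)$ by the convention $\EE(u):=\infty$ off $\D(\EE)$. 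Moreover each such $u$ is $\rr_{var}$-Lipschitz, because $|\mu(f_i)-\nu(f_i)|\le\|f_i\|_\infty\,\rr_{var}(\mu,\nu)$ by \eqref{VAR}; being continuous on all of $E=\mac$, its restriction to any closed set is continuous, so $u$ is quasi-continuous.

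For Q3 I would fix a countable family $\{f_k\}\subseteq C_b(M)$ which is measure-determining on $\M$ (such a family exists because $M$ is Polish) and set $u_k(\mu):=\arctan(\mu(f_k))\in\cyl\subseteq\D(\EE)$. These are quasi-continuous by the previous paragraph, and if $u_k(\mu)=u_k(\nu)$ for all $k$ then $\mu(f_k)=\nu(f_k)$ for all $k$, whence $\mu=\nu$. Thus $\{u_k\}$ separates every pair of points and Q3) holds with $N=\emptyset$.

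The heart of the matter is Q1). Here I would invoke the tightness criterion for capacities of \cite[Chapt.~IV]{MR92} (the same route as in \cite{RWW24}): combined with the tightness of the Borel probability measure $\LL$ on the Polish space $(\mac,\rr_{var})$, it suffices to exhibit a countable family $\{g_n\}\subseteq\D(\EE)$ of bounded $\rr_{var}$-Lipschitz functions with $\sup_n\EE(g_n)<\infty$ that recovers the (bounded) metric, i.e.\ $\rr_{var}(\mu,\nu)\wedge 1=\sup_n|g_n(\mu)-g_n(\nu)|$ for all $\mu,\nu$. To build $\{g_n\}$ I would fix a dense sequence $\{\nu_n\}\subseteq\mac$ and put $g_n:=\rr_{var}(\cdot,\nu_n)\wedge 1$; a triangle-inequality argument, letting $\nu_n\to\mu$ along the dense set, yields the metric-recovery identity. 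The crucial feature is that $g_n$ has a uniformly controlled extrinsic derivative: since adding an $\varepsilon$-atom changes the total variation by at most $\varepsilon$, one has $|\De[\rr_{var}(\cdot,\nu_n)](\mu)(x)|\le 1$, so $\|\De g_n(\mu)\|_{L^1(\mu)+L^\infty(\mu)}\le 1$ (resp.\ $\|\Det g_n(\mu)\|_{L^\infty(\mu)}$ bounded in case (2)), which is exactly the quantity the energy bound controls.

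The remaining, and I expect most delicate, step is to place these truncated total-variation distance functions inside $\D(\EE)$ \emph{with} the energy bound, since $g_n\notin\cyl$. Using the isometry of $(\mac,\rr_{var})$ with $\{h\lam:h\in L^1_+(\lam)\}$ under the $L^1(\lam)$-norm and the separability of $L^1(\lam)$, I would select a countable norming family $\{f_k\}\subseteq C_b(M)$ with $\|f_k\|_\infty\le 1$ such that $\rr_{var}(\mu,\nu)=\sup_k(\mu-\nu)(f_k)$, approximate $g_n$ from below by finite maxima of the Lipschitz cylinder functions $\mu\mapsto\big((\mu-\nu_n)(f_k)\big)\wedge 1$ (smoothing the max into $\cyl$), and pass to the limit. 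The approximants converge to $g_n$ in $L^2(\LL)$, and their $\EE$-energies are bounded uniformly by $C(1+1)$ through the hypothesis, because $\|f_k\|_\infty\le1$; the closedness of $(\EE,\D(\EE))$, via lower semicontinuity of $\EE$ on $L^2(\LL)$ together with a Banach--Saks argument, then places $g_n\in\D(\EE)$ with $\EE(g_n)\le C$. This approximation is the main obstacle: it is where the $L^1(\mu)+L^\infty(\mu)$ (resp.\ $L^\infty(\mu)$) structure of the extrinsic derivative is decisive, and it is what bridges the gap between the weak topology seen by cylinder functions and the finer total-variation topology in which the nest must be compact. Case (2) is handled along the same lines on the closed subspace $(\pac,\rr_{var})$, using \eqref{eq:extDer2} and $\LL(\pac)=1$ to regard $(\EE,\D(\EE))$ as a Dirichlet form in $L^2(\pac,\LL)$.
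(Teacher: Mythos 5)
Your blueprint is the same as the paper's: Q2) is assumed, Q3) comes from a countable separating family of reparametrized cylinder functions, and Q1) is reduced to showing that truncated total-variation distances to a countable dense subset of $\mac$ lie in $\D(\EE)$ with uniformly bounded energies, after which \cite[Lem.~I.2.12 \& Prop.~III.3.5]{MR92} give quasi-uniform convergence and hence a nest of totally bounded sets; your case (2) follows the paper's part (a) in all essentials. However, there are two genuine gaps, and the first invalidates your case (1). You read $\|\cdot\|_{L^1(\mu)+L^\infty(\mu)}$ as the norm of the sum space, which is dominated by $\|\cdot\|_{L^\infty(\mu)}$; this is the only reason you can claim both that every $u\in\cyl$ has a finite right-hand side in hypothesis (1) and that the approximants of $\rr_{var}(\cdot,\nu_n)\wedge 1$ have energies at most $2C$ merely because $\|f_k\|_\infty\le 1$. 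The norm intended in the theorem dominates the $L^1(\mu)$-norm of $\De u(\mu)$: this is what the remark introducing $\cyl_{1}$ says (for $u(\mu)=\mu(M)$ the right-hand side is declared unbounded), it is what makes \eqref{eq:QRwl} a computation rather than a triviality, and it is what the application in Theorem \ref{TA}(1) requires, via $\|f\|_{L^2(\mu)}^2\le\|f\|_{L^1(\mu)}\|f\|_{L^\infty(\mu)}$, to verify the hypothesis for the diffusion forms. Under that reading your case (1) breaks down: recovering $\rr_{var}(\mu,\nu_n)$ forces test functions $f_k$ whose $L^1(\mu)$-norms are not controlled uniformly in $\mu\in\mac$ --- indeed $\|\De[\chi((\cdot-\nu_n)(f_k))](\mu)\|_{L^1(\mu)}$ is of order $\mu(|f_k|)$, which is unbounded because total mass is unbounded on $\mac$ --- so the hypothesis yields no uniform energy bound for your approximants. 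This is precisely the obstruction that part (b) of the paper's proof removes: it first constructs a nest of bounded-mass sets from $w_l(\mu)=\kappa(\ln(1+\mu(M))-l)$, whose derivative is admissible only thanks to the factor $(1+\mu(M))^{-1}$, and then multiplies the TV-approximants by the cutoffs $1-w_l$ before invoking the hypothesis. Your proposal has no substitute for this step; the same misreading also means your Q3 functions $\arctan(\mu(f_k))$ lie in $\D(\EE)$ only if the $f_k$ are taken nonnegative, as in Lemma \ref{lem:Psep}.

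Second, the tightness criterion you invoke --- metric recovery by a countable family $\{g_n\}\subseteq\D(\EE)$ with $\sup_n\EE(g_n)<\infty$ --- is not a theorem for abstract Dirichlet forms. What the argument needs is a bound on $\EE\big(\inf_{n\leq m}g_n\big)$ uniform in $m$; from individual bounds one only gets $\EE\big(\inf_{n\leq m}g_n\big)^{1/2}\leq\sum_{n\leq m}\EE(g_n)^{1/2}$, which grows with $m$. The paper obtains the uniform bound by applying the hypothesis, through Lemma \ref{lem:Lip} (Lipschitz images $g(u_1,\dots,u_n)$ with $\big\|\sum_i|\partial_i g|\big\|_\infty\leq 1$), directly to the smoothed finite inf--max combinations of the cylinder functions $u_{i,j,k}$, and only afterwards passes to the $L^2$-limit by lower semicontinuity. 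You apply this mechanism to each $g_n$ separately, but never to the infima over the dense set, so the quasi-uniform convergence step is unjustified as written. This second gap can be closed with the tools you already set up; the first cannot.
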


It is easy to see for any $u\in\cyl$,
$$\sup_{\mu\in\M}\|\Det u(\mu)\|^2_{L^\infty(\mu)}<\infty,\ \ \sup_{\mu\in\M}\|\De u(\mu)\|^2_{L^\infty(\mu)}<\infty.$$
However,   $\|\De u(\mu)\|^2_{L^1(\mu)}$ may be  unbounded over $\mu\in\mac$,   as   demonstrated by $u(\mu):=\mu(M)$ for which $\De u=1$. Hence, the condition in Theorem  \ref{thm:QR}(1)  is imposed   only for $u$ in the subclass
$$ \cyl_{1}:=\bigg\{u\in \cyl:\ \sup_{\mu\in\mac}\|\De u(\mu)\|_{L^1(\mu)}  <\infty\bigg\}.$$

To prove Theorem \ref{thm:QR}, we   present the following  lemma which ensurs property Q3) for the situations of Theorem \ref{thm:QR}, due to the continuity of cylindrical functions.

\begin{lem}\label{lem:Psep}
	  There exists a sequence  $\{u_n\}_{n\ge 1}\subset \cyl_{1}$   separating points in $\mac\ ($hence in $\pac)$.
	 \end{lem}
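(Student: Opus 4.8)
The plan is to realize the separating sequence by one–dimensional cylinder functions of the form $u(\mu)=g(\mu(F))$, with a single \emph{nonnegative} test function $F\in C_b(M)$ and a fixed bounded, strictly increasing $g\in C_b^1(\R)$. The problem then splits into two essentially independent tasks: first, produce a \emph{countable} family $\{F_k\}_{k\ge1}\subset C_b(M)$ with $0\le F_k\le 1$ that is measure-determining on $\mac$, i.e.\ $\mu_1(F_k)=\mu_2(F_k)$ for all $k$ forces $\mu_1=\mu_2$; second, compose with a well-chosen $g$ so that the resulting functions both separate and lie in $\cyl_1$. Concretely I would set
\[
u_k(\mu):=\arctan\big(\mu(F_k)\big),\qquad \mu\in\M,\ k\ge1,
\]
so that each $u_k\in\cyl$ with $n=1$, $f_1=F_k\in C_b(M)$ and $g=\arctan\in C_b^1(\R)$.

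Next I would check the two required properties. Separation is immediate: if $\mu_1\neq\mu_2$ in $\mac$, then $\mu_1(F_k)\neq\mu_2(F_k)$ for some $k$ by the measure-determining property, and since $\arctan$ is injective, $u_k(\mu_1)\neq u_k(\mu_2)$. For membership in $\cyl_1$ I would invoke \eqref{eq:extDer}, which gives $\De u_k(\mu)(x)=\dfrac{F_k(x)}{1+\mu(F_k)^2}$; using $F_k\ge0$, so that $|F_k|=F_k$ and $\mu(|F_k|)=\mu(F_k)$, one obtains
\[
\|\De u_k(\mu)\|_{L^1(\mu)}=\frac{\mu(F_k)}{1+\mu(F_k)^2}\le\frac12,\qquad \mu\in\mac,
\]
uniformly in $\mu$, whence $\sup_{\mu\in\mac}\|\De u_k(\mu)\|_{L^1(\mu)}\le\tfrac12<\infty$ and $u_k\in\cyl_1$. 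Since $\pac\subset\mac$, the same sequence automatically separates points of $\pac$.

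The one genuine point requiring care — and the main obstacle — is the construction of a countable, \emph{nonnegative}, measure-determining family $\{F_k\}$. Nonnegativity is not cosmetic: it is exactly what turns the $L^1(\mu)$–norm into $\mu(F_k)$ and makes the bound $t/(1+t^2)\le 1/2$ applicable, even though the total mass $\mu(M)$ is unbounded on $\mac$ (this is why the naive choice of sign-changing separating functions would fail the $\cyl_1$ requirement, and it is the reason the decaying factor $g'(t)=1/(1+t^2)$ must be used). To build the family I would exploit that $(M,\rho)$ is Polish, hence by Urysohn metrization admits a homeomorphic embedding $\iota=(\phi_k)_{k\ge1}\colon M\hookrightarrow[0,1]^\N$ into the Hilbert cube with $[0,1]$–valued continuous coordinate maps $\phi_k$. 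By the Lusin--Souslin theorem $\iota$ is a Borel isomorphism onto its image, so any $\mu\in\mac$ is determined by its pushforward $\iota_*\mu$ on the compact metric space $[0,1]^\N$; the latter is in turn determined by its integrals against the countable algebra of coordinate monomials, which is dense in $C([0,1]^\N)$ by Stone--Weierstrass. Pulling back, the countable family of finite products $\{\,\phi_{k_1}\cdots\phi_{k_m}:m\ge1,\ k_1,\dots,k_m\ge1\,\}\subset C_b(M)$, all taking values in $[0,1]$, is measure-determining on $\mac$; enumerating it as $\{F_k\}_{k\ge1}$ completes the construction and hence the proof.
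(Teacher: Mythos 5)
Your overall strategy is structurally the same as the paper's: build a countable family of $[0,1]$-valued functions in $C_b(M)$ whose integrals determine measures in $\mac$, then compose with a fixed scalar function whose derivative decays so that the resulting one-dimensional cylinder functions land in $\cyl_{1}$. Your damping device is in fact cleaner than the paper's: since $\arctan$ is injective, separation transfers instantly, and the uniform bound $\|\De u_k(\mu)\|_{L^1(\mu)}=\mu(F_k)/(1+\mu(F_k)^2)\le\tfrac12$ is correct. (The paper instead builds a multiplicative family $f_m=\prod_i(\tfrac12+\rho_i)^{m_i}$ from the metric, \emph{including} the constant $f_0=\eins$, cites \cite[Theorem~11(b)]{BK10} for the separation of measures, and truncates with compactly supported $\chi_l$, transferring separation through a pointwise limit.)

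However, the step you yourself single out as the main obstacle contains a genuine error. The linear span of the \emph{non-constant} coordinate monomials $\phi_{k_1}\cdots\phi_{k_m}$, $m\ge 1$, is not dense in $C([0,1]^{\N})$: it is a point-separating algebra containing no constants, and every element of it vanishes at $\mathbf{0}:=(0,0,\dots)$, so by the non-unital form of Stone--Weierstrass its uniform closure is exactly $\{f\in C([0,1]^{\N}):f(\mathbf{0})=0\}$. Hence a finite signed measure annihilating all your $F_k$ need only push forward to a multiple of $\delta_{\mathbf{0}}$, and this multiple can be nonzero when $\mathbf{0}\in\iota(M)$. This is a real failure, not a technicality, because the embedding is unspecified and $\lam$ may have atoms: take $M=[0,1]$, the legitimate embedding $\iota(x)=(x,x,x,\dots)$, and $\lam=\d x+\delta_0$; then $\mu_1=\delta_0$ and $\mu_2=2\delta_0$ are distinct elements of $\mac$, yet every $F_k$ (here $F_k(x)=x^{m}$ for various $m\ge1$) vanishes at $0$, so $\mu_1(F_k)=\mu_2(F_k)=0$ for all $k$ and your $u_k$ do not separate them. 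The repair is one line: adjoin the constant function $\eins_M$ (the empty product $m=0$) to the family. The algebra is then unital and point-separating, Stone--Weierstrass applies in its standard form, and---this is exactly where your damping earns its keep---the corresponding cylinder function $\arctan(\mu(M))$ still lies in $\cyl_{1}$ because $\mu(M)/(1+\mu(M)^2)\le\tfrac12$, even though the undamped $\mu\mapsto\mu(M)$ is precisely the paper's example of a cylinder function outside $\cyl_{1}$. With that correction your proof is complete and is a valid, self-contained alternative to the paper's argument, replacing the citation of \cite{BK10} by a Hilbert-cube embedding plus Stone--Weierstrass duality.
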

\begin{proof} Let $\{x_i\}_{i\ge 1}$ be dense in $E$, and for each $i\ge 1$ define
$$\rr_i(x):= \rr(x_i,x)\land \ff 12,\ \ x\in E.$$
Then the family $\{\rr_i\}_{i\ge 1}$ separates points in $E$. On the other hand,
it is easy to see that the set
$$\mathbb I:=\Big\{m\in \Z_+^{\mathbb N}:\ N_m:=\sup\{i\in\mathbb N: m_i>0\}<\infty\Big\}$$
is countable, where we denote $m=0$ and $N_m=0$  if  $m_i=0$ for all $i\ge 1$. For any $m\in \mathbb I$,   define
	 \begin{equation*}
		f_m(x):= \prod_{i=1}^{N_m}\Big(\ff 1 2 +\rr_i(x)\Big)^{m_i},\quad x\in \E,
	\end{equation*} under the convention that   $f_0=\eins_E$.   So, $\{f_m: m\in \mathbb I\}$ is closed under multiplication, and separates points in $E$ since it contains $\{\ff 1 2+\rr_i\}_{i\ge 1}$. By  \cite[Theorem~11(b)]{BK10},
	 the countable family $\{\mu\mapsto u_m(\mu):=\mu(f_m)\}_{m\in \mathbb I}$ separates points in $\pac$, and also in $\mac$ since $f_0=\eins_{E}$ separates the total mass of finite measures.
	
  To approximate $u_m$ by functions in $\bigcap_{1\le p <\infty}\cyl_{p}$,  for each $l\in \mathbb N$, let $\chi_l\in C^1_0([0,\infty))$ such that
 \beq \label{Q} \chi|_{[0,2l]^c}=0,\ \   \ \chi_l(s)=s\ \text{for}\ s\in [0,l].\end{equation}
We   define
$$u_{m,l}(\mu):=\chi_l \big(u_m(\mu) \big)=  \chi_l \big(\mu(f_m)\big), \ \ l\in\mathbb N,\ m\in \mathbb I.$$
By  $\chi_l(s)=s$ for $s\in[0,l],$ we have
$$\lim_{l\to\infty} u_{m,l} =  u_m,\ \ m\in\mathbb I.$$
Since $\{u_m: m\in \mathbb I\}$ separates points in $\mac$,
the family $\{u_{m,l}: m\in \mathbb I, l\in\mathbb N\}$ separates points in $\mac$ as well. It remains to verify that each $u_{m,l}\in  \cyl_{1}$.

By  \eqref{Q} we have
\begin{align*}
		 \|\De u_{m,l}(\mu)\|_{L^1(\mu)}^2 = \big[\chi_l'\big(\mu(f_m)\big)\big]^2\mu(f_m)^{2}
    \leq
		 \|\chi_l'\|_\infty^2\eins_{[0,2l]}\big( \mu(f_m)\big)  \mu(f_m)^{2}  <\infty,\ \ \mu\in \mac.
	\end{align*} Then the proof is finished.
\end{proof}

The next lemma extends the condition in  Theorem \ref{thm:QR} to the class of Lipschitz cylindrical functions.

\begin{lem}\label{lem:Lip}
	Let $n\in\N$, $u_1,\dots,u_n\in\cyl$ and $g:\R^n\to\R$ be a Lipschitz continuous function.
	In the situation of  Theorem \ref{thm:QR}(1)  it holds
	\begin{align*}
		&\EE(g(u_1,\dots,u_n))\\
		&\leq C\bigg(\Big\|\sum_{i=1}^n|\partial_ig|\Big\|_{\infty}^2 \max_{1\leq i\leq n}\sup_{\mu\in\mac}\|\De u_i(\mu)\|^2_{L^1(\mu)+L^\infty(\mu)} +\|g(u_1,\cdots,u_n)\|_\infty^2\bigg).
	\end{align*}
	In the situation of  Theorem \ref{thm:QR}(2)  it holds
	\begin{equation*}
		\EE(g(u_1,\dots,u_n))\leq C\bigg(\Big\|\sum_{i=1}^n|\partial_ig|\Big\|_{\infty}^2 \max_{1\leq i\leq n}\sup_{\mu\in\pac}\|\Det u_i(\mu)\|_{L^\infty(\mu)}^2 +\|g(u_1,\cdots,u_n)\|_\infty^2\bigg).
	\end{equation*}
\end{lem}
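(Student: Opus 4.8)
The plan is to reduce the Lipschitz case to the bound already assumed on $\cyl$ by smoothing $g$, and then to pass to the limit using the lower semicontinuity of a Dirichlet form. We may assume the right-hand side is finite, since otherwise the asserted inequality is vacuous; in the situation of Theorem \ref{thm:QR}(1) this means precisely that each $u_i\in\cyl_{1}$ (recall that for cylinder functions the $L^\infty(\mu)$-part of the derivative is always uniformly bounded, so finiteness of the $L^1(\mu)+L^\infty(\mu)$-supremum is equivalent to $u_i\in\cyl_1$), while in case (2) the relevant supremum is automatically finite. First I would mollify: fix a standard mollifier $\phi_\ee$ on $\R^n$ and set $g_\ee:=g*\phi_\ee\in C^\infty(\R^n)$. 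Since each $u_i\in\cyl$ is a bounded cylinder function, the map $(u_1,\dots,u_n)$ takes values in a fixed compact set $K\subset\R^n$, on which $g_\ee\to g$ uniformly; moreover $g_\ee(u_1,\dots,u_n)$ is again an element of $\cyl$, being $g_\ee$ composed with the $C_b^1$ inner functions of the $u_i$ (boundedness of the inner functions makes the resulting outer function $C_b^1$). The key point is the gradient estimate $\sum_i|\pp_i g_\ee|\le\big(\sum_i|\pp_i g|\big)*\phi_\ee\le\big\|\sum_i|\pp_i g|\big\|_\infty$, obtained by pulling absolute values inside the convolution; this is exactly what keeps the Lipschitz constant of $g$ under control along the approximation, uniformly in $\ee$.

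Next I would invoke the chain rule for the extrinsic derivative, which for these cylinder functions follows directly from \eqref{eq:extDer} together with the finite-dimensional chain rule, namely $\De[g_\ee(u_1,\dots,u_n)](\mu)(x)=\sum_{i=1}^n(\pp_i g_\ee)(u_1(\mu),\dots,u_n(\mu))\,\De u_i(\mu)(x)$. Because the coefficients $(\pp_i g_\ee)(u_1(\mu),\dots,u_n(\mu))$ are constants in $x$, the subadditivity and positive homogeneity of the $L^1(\mu)+L^\infty(\mu)$-norm give
\begin{align*}
\big\|\De[g_\ee(u_1,\dots,u_n)](\mu)\big\|_{L^1(\mu)+L^\infty(\mu)}
&\le\Big(\sum_{i=1}^n\big|(\pp_i g_\ee)(u_1(\mu),\dots,u_n(\mu))\big|\Big)\max_{1\le i\le n}\|\De u_i(\mu)\|_{L^1(\mu)+L^\infty(\mu)}\\
&\le\Big\|\sum_{i=1}^n|\pp_i g|\Big\|_\infty\,\max_{1\le i\le n}\sup_{\nu\in\mac}\|\De u_i(\nu)\|_{L^1(\nu)+L^\infty(\nu)}.
\end{align*}
Substituting this into the hypothesis of Theorem \ref{thm:QR}(1), applied to $g_\ee(u_1,\dots,u_n)\in\cyl$, yields the target bound with $g_\ee$ in place of $g$.

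Finally I would let $\ee\downarrow0$. Uniform convergence on $K$ gives $g_\ee(u_1,\dots,u_n)\to g(u_1,\dots,u_n)$ in $L^\infty$, hence—since $\LL$ is a probability measure—in $L^2(\mac,\LL)$, and also $\|g_\ee(u_1,\dots,u_n)\|_\infty\to\|g(u_1,\dots,u_n)\|_\infty$; meanwhile the gradient bound above is uniform in $\ee$. Since $\EE$, extended by $+\infty$ off $\D(\EE)$, is lower semicontinuous on $L^2(\mac,\LL)$, taking $\liminf_{\ee}$ transfers the $\ee$-bounds to the limit and produces exactly the claimed inequality (and incidentally shows $g(u_1,\dots,u_n)\in\D(\EE)$ whenever the right-hand side is finite). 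The situation of Theorem \ref{thm:QR}(2) is handled identically, replacing $\De$, $\mac$ and the $L^1(\mu)+L^\infty(\mu)$-norm by $\Det$, $\pac$ and the $L^\infty(\mu)$-norm, and using the analogous chain rule from \eqref{eq:extDer2}. I expect the only genuinely delicate step to be the mollification gradient estimate combined with the norm-subadditivity observation, as this is what lets the Lipschitz constant survive the smoothing; the passage to the limit is then routine once lower semicontinuity is in hand.
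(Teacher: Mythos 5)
Your proof is correct and follows essentially the same route as the paper: establish the bound for smooth $g$ via the chain rule \eqref{eq:extDer}/\eqref{eq:extDer2} and the subadditivity of the norms, then handle Lipschitz $g$ by mollification and lower semicontinuity of the closed form (the paper cites \cite[Lem.~II.2.12]{MR92} for exactly this limiting step, which it leaves as ``standard''). You merely spell out the mollification details -- the uniform gradient bound $\sum_i|\partial_i g_\ee|\le\big\|\sum_i|\partial_i g|\big\|_\infty$ and the membership $g_\ee(u_1,\dots,u_n)\in\cyl$ -- that the paper omits.
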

\begin{proof}
	If $g$ is continuously differentiable, then $g(u_1,\dots,u_n)\in\cyl$ and \eqref{eq:extDer} yields
	\begin{align*}
		\big\|\De(g(u_1,\dots,u_n))(\mu)\big\|_{L^1(\mu)+L^\infty(\mu)}&=
		\Big\|\sum_{i=1}^n\partial_ig(u_1,\dots,u_n)(\mu)\De u_i(\mu)\Big\|_{L^1(\mu)+L^\infty(\mu)}\\
		&\leq\sum_{i=1}^n |\partial_ig(u_1,\dots,u_n)(\mu)|\big\|\De u_i(\mu)\big\|_{L^1(\mu)+L^\infty(\mu)}
		\\&\leq \Big\|\sum_{i=1}^n|\partial_ig|\Big\|_{\infty}\Big(\max_{1\leq i\leq n}\|\De u_i(\mu)\|_{L^1(\mu)+L^\infty(\mu)}\Big),\quad\mu\in\mac.
	\end{align*}
	Analogously, \eqref{eq:extDer2} yields
	\begin{equation*}
		\big\|\Det(g(u_1,\dots,u_n))(\mu)\big\|_{\infty}\leq \Big\|\sum_{i=1}^n|\partial_ig|\Big\|_{L^\infty(\R^n,\d x)}\Big(\max_{1\leq i\leq n}\|\Det u_i(\mu)\|_{L^\infty(\mu)}\Big),\quad\mu\in\pac.
	\end{equation*}
	So, by the assumptions of Theorem \ref{thm:QR}, the claim holds for $g$ as chosen.
	The generalization to the case in which $g$ is merely a Lipschitz continuous function is achieved in a straight-forward manner by standard mollification techniques
	and an application of \cite[Lem.~II.2.12]{MR92}.
\end{proof}

\begin{proof}[Proof of Theorem \ref{thm:QR}]
	By Lemma \ref{lem:Psep},   only Q1), tightness of the $1$-capacity associated with $\EE$, needs proof.
	
	(a) We first prove  Q1) in the situation of  Theorem \ref{thm:QR}(2).
	
	Since $L^1(\lam)$ is separable, we choose  a dense sequence $\{f_i\}_{i\in\N}$   in $L^1_+(\lam):=\{f\in L^1(\lam): f\ge 0\}$.
	Noting that
	$$\lam(|f_i|)=\lam(f_i)=\sup_{\varphi\in L_+^\infty(\lam), \|\varphi\|_\infty\le 1} \lam(f_i\varphi),$$
	    there exists a sequence $\{\varphi_{i,j}\}_{j\ge 1}\subset  C_b(M),$ such that $0\le \varphi_{i,j} \leq 1$ and
	\begin{equation*}
		\sup_{j\ge 1} \lam(f_i\varphi_{i,j})=\lam(|f_i|)=\lam(f_i).
	\end{equation*} Since $\{f_i\}_{i\in\N}$   is dense in $L^1_+(\lam),$ this implies
	\begin{equation*}
		\sup_{i,j\in\N}\lam(g\varphi_{i,j})= \lam(g),\quad \ g\in L^1_+(\lam),
	\end{equation*}
	and in particular,
	\begin{equation}\label{eq:QR1}
		\rho_{var}(f\lam,f_k\lam)= \lam(|f-f_k|)=\sup_{i,j\in\N}\lam(\varphi_{i,j}(f-f_k)),\quad k\in\N,\,f\in L^1_+(\lam).
	\end{equation}
	Let
	\begin{equation}\label{eq:QRchi}
		\chi(s):=-\frac{3}{2} +\int_{-2}^{[s\lor (-2)]\land 2} \big[(t+2)^+ \land[(2-t)^+\land 1\big]\,\d t,\quad s\in\R,
	\end{equation}
	and  define
	\begin{equation}\label{eq:QRvijm}
		u_{i,j,k}(\mu):=\chi\big(\mu(\varphi_{i,j})-\lam(f_k\varphi_{i,j})\big), \quad \mu\in \mac, \,i,j,k\in\N.
	\end{equation}
	Because of \eqref{eq:QR1} and the fact that $\chi$ is a continuous, monotone increasing function, it holds
	\begin{equation}\label{eq:QR2}
		  \sup_{ i,j\in\N}u_{i,j,k}(\mu)
		= \chi(\rho_{var}(\mu,f_k\lam)),\quad \mu\in \mac,\,k\in\N.
	\end{equation}
	Since $0\le\varphi_{i,j}\le 1$ and $0\le \chi' \leq 1$, it holds
	\begin{equation*}
		\|\Det u_{i,j,k}(\mu)\|_{L^\infty(\mu)}\leq
		\big\|\varphi_{i,j}-\mu(\varphi_{i,j})\big\|_{L^\infty(\mu)}\leq 1,\quad \mu\in\pac.
	\end{equation*}
	Combining this with $\|u_{i,j,k}\|_\infty\leq\|\chi\|_\infty\le \ff 3 2$  and Lemma \ref{lem:Lip}, we obtain that in the situation of Theorem \ref{thm:QR}(2),
	\begin{equation}\label{eq:QR3}
		\EE\Big(\inf_{1\leq k\leq k'}\,\sup_{1\leq i\leq i'}\,\max_{1\leq j\leq j'}u_{i,j,k}\Big)\leq  4C,\quad i',j',k'\in\N.
	\end{equation}
	By \eqref{eq:QR2}, we obtain
	\begin{equation*}
	\lim_{i',j'\to\infty}	\inf_{1\leq k\leq k'}\,\max_{1\leq i\leq i'}\,
		\max_{1\leq j\leq j'}u_{i,j,k} =
		\inf_{1\leq k\leq k'}\chi(\rho_{var}(\,\cdot,\,f_k\lam))  \quad \text{in}\  \mac,\ \ \ \,  k\in\N.
	\end{equation*}  Since $\|\chi\|_\infty<\infty,$ by the dominated convergence theorem, the convergence holds in $L^2(\mac, \LL) (=L^2(\pac, \LL)$ since  $\LL(\pac)=1$).
	Thus,  \eqref{eq:QR3} and \cite[Lem.~I.2.12]{MR92} imply
	\begin{equation}\label{eq:QR4}
		\EE\Big(\inf_{1\leq k\leq k'}\chi\circ\rho_{var}(\,\cdot\,,f_k\lam)\Big)\leq 4C.
	\end{equation}
	By choice, $\{f_k\}_{k\geq 1}$ is dense in $L^1_+(\lam),$  and noting that $\chi(s)\downarrow 0$ as $s\downarrow 0$,  for $\mu=f\lam$ we have
	\beq\label{PO} \inf_{1\le k\le k'} \chi\circ\rho_{var}(\mu, f_k\lam)=   \chi\Big(\inf_{1\le k\le k'} \lam(|f-f_k|)\Big)\downarrow 0\ \  \text{as}\ \ k'\uparrow\infty.\end{equation} 	
	Then, 	as a consequence of \eqref{eq:QR4}, \cite[Lem.~I.2.12 \& Prop.~III.3.5]{MR92} and the monotone decreasing nature of that sequence,
	\begin{equation*}
		\lim_{k'\to\infty} \inf_{1\leq k\leq k'}\chi\circ\rho_{var}(\,\cdot\,,f_k\lam)
		 = 0\quad\text{quasi-uniformly on } \ \mac.
	\end{equation*}
	In view of $\chi(s)=s$, $s\in[-1,1]$, there exists a nest ${\{K_n\}}_{n\geq 1}$ in $\mac$ such that
	\begin{equation*}
	\lim_{k'\to \infty} \sup_{\mu\in K_n}\inf_{1\leq k\leq k'}\rho_{var}(\mu,f_k\lam)=0,\quad\  n\in\N.
	\end{equation*}
	This shows that ${\{K_n\}}_{n\geq 1}$ is a nest of totally bounded (closed) sets, i.e.~a nest of compact sets.
	Since $\pac$ is a closed set in $\mac$ and  $\LL(\pac)=1$,  we have  ${\rm Cap}_1(\mac\setminus\pac)=0$ and ${\{K_n\cap\pac\}}_{n\geq 1}$ is a nest of compact sets in $\pac$. This completes the proof.

	(b) Proof of Q1) in the situation of Theorem \ref{thm:QR}(1).
	
	We first  construct a nest ${\{K_n\}}_{n\geq 1}$ such that
	\begin{equation*}
		\sup_{\mu\in K_n}\mu(M)< \infty,\quad n\in\N.
	\end{equation*}
	To this end,   let
	\begin{equation*}
		\kappa(s):=\int_{-\infty}^st^+\land (2-t)^+\d t,\qquad  s\in\R,
	\end{equation*}
	and define
	$$w_l(\mu):=\kappa\big(\ln (1+\mu(M))-l\big),\ \ \ \mu\in\mac,\ l\in\N.$$
	Since $|\kappa'(\cdot)|\leq 1$,  it holds
	\begin{equation}\label{eq:QRwl}
		\|\De w_l(\mu)\|_{L^1(\mu)+L^\infty(\mu)}^2\leq \frac{\|\eins_\M\|_{L^1(\mu)+L^\infty(\mu)}^2}{(1+\mu(M))^2} = 1.
	\end{equation}
	So, the assumptions and $|\kappa(\cdot)|\leq 1$ imply $\sup_{l\in\N}\EE_1(w_l)\leq 2C$.
Combining this with the fact that $w_l\ge 0$ is decreasing in $l$ and $\inf_lw_l(\mu)=0$ for $\mu\in\mac$, 	 ${\{w_{l}\}}_{l\geq 1}$   converges to zero quasi-uniformly as $l\to\infty$
 by \cite[Lem.~II.2.12 \& Prop.~III.3.5]{MR92}.
	For $\mu\in\mac$ and $l\in\N$ it holds $w_l(\mu)<1/2$ if and only if $\log(1+\mu(M))< l+1$.
	So, we can choose a nest $\{K_n\}_{n\geq 1}$ such that for fixed $n\in\N$ there exists $l_n\in\N$ with
	\begin{equation}\label{*Y}
		K_n\subseteq \{\mu:w_{l_n}<1/2\}.
	\end{equation}
To show that the $1$-capacity associated with $\EE$ is tight, it suffices to fix $n\in\N$ and find another nest ${\{F^n_m\}}_{m\geq 1}$  such that each intersection $K_n\cap F^n_m$ is compact.
	Indeed, the claim would follow from
	\begin{equation*}
		\inf\big\{{\rm Cap}_1( (K_n\cap F_m^n)^c):n,m\in\N\big\}\leq
		\inf\big\{{\rm Cap}_1( K_n^c)+{\rm Cap}_1(( F_m^n)^c):n,m\in\N\big\}=0.
	\end{equation*}
	Now, let $n\in \mathbb N$ be fixed, we intend to construct the desired $\{F_m^n\}_{m\ge 1}$. From here on, we also fix $l:=l_n$   in \eqref{*Y}
	
	 	 By \eqref{eq:QRvijm}, \eqref{eq:QRwl}  and  the fact that
		 $$|\chi|\le \ff 3 2,\ \ \   |\chi'|\le 1,\ \ \ |1-w_l(\mu)|\le \eins_{[0,\exp({l+2})]}(\mu(M)),$$
		 we obtain
	\begin{align*}
		&\|\De \big((1-w_l)u_{i,j,k}\big)(\mu)\|_{L^1(\mu)+L^\infty(\mu)}\\
		&\leq \Big\|\|\chi\|_\infty\De (1-w_l)(\mu)(\cdot)+(1-w_l(\mu))\|\chi'\|_\infty\varphi_{i,j}(\cdot)\,\Big\|_{L^1(\mu)+L^\infty(\mu)}\\
		 &\leq \ff 3 2+ \e^{l+2}+1= \ff 5 2 +\e^{l+2},\quad\mu\in\mac,\, i,j,k\in\mathbb N,
	\end{align*}
	Then Lemma \ref{lem:Lip} and the fact that $|(1-w_l)u_{i,j,k}|\le \ff 3 2$ yield
	\begin{equation}\label{eq:QR6}
		\EE\Big(\inf_{1\leq k\leq k'}\,\inf_{1\leq i\leq i'}\,\max_{1\leq j\leq j'}\big(
		(1-w_l)u_{i,j,k}\big)\Big)\leq C\Big(\big(\tfrac{5}{2}+ \e^{l+2}\big)^2+\tfrac 9 4\Big)^2=:\tilde C,\quad \ i',j',k'\in\N,
	\end{equation}
	with constant $\tilde C\in(0,\infty)$ (since $l$ is fixed).
	Thanks to \eqref{eq:QR2} and Lebesgue's dominate convergence, we obtain  in $L^2(\mac,\LL)$ that
	\begin{equation*}
	\lim_{i',j'\to\infty}	\inf_{1\leq k\leq k'}\,\max_{1\leq i\leq i'}\,
		\max_{1\leq j\leq j'}\big((1-w_l)u_{i,j,k}\big)=
		\inf_{1\leq k\leq k'}\big((1-w_l)\chi(\rho_{var}(\,\cdot\,,f_k\lam))\big),\quad  \,k'\in\N.
	\end{equation*}
	So, \eqref{eq:QR6} and \cite[Lem.~I.2.12]{MR92} imply
	\begin{equation}\label{eq:QR7}
		\EE\Big(\inf_{1\leq k\leq k'}\big((1-w_l)\chi\circ\rho_{var}(\,\cdot\,,f_k\lam)\big)\Big)\leq \tilde C,\ \ k'\in\mathbb N.
	\end{equation}
	By \eqref{PO},  \eqref{eq:QR7} and \cite[Lem.~I.2.12  \& Prop.~III.3.5]{MR92},
		\begin{equation*}
	\lim_{k'\to\infty} 	\inf_{1\leq k\leq k'}\big((1-w_l)\chi(\rho_{var}(\,\cdot\,,f_k\lam))\big)=0
		 \quad\text{quasi-uniformly on }\mac.
	\end{equation*}
	In view of $\chi(s)=s$, $s\in[-1,1]$, there exists a nest in $\mac$,
	which we denote by ${\{F^n_m\}}_{m\geq 1}$, such that
	\begin{equation*}
	\lim_{k'\to\infty} 	\sup_{\mu\in F^n_m}\,\inf_{1\leq k\leq k'}\big((1-w_l(\mu))\rho_{var}(\mu,f_k\lam)\big)=0,\quad m\in\N.
	\end{equation*}
	As by choice $l=l_n$    in \eqref{*Y}, we have $1-w_{l}\ge \ff 12 $ on $K_n$,  so that this implies  	
		\begin{equation*}
			\lim_{k'\to\infty} \sup_{\mu\in K_n\cap F^n_m}\,\inf_{1\leq k\leq k'}\rho_{var}(\mu,f_k\lam)=0,\quad m\in\N.
		\end{equation*}
	Therefore,   $K_n\cap F^n_m$ is a totally bounded (closed) set, i.e.~a compact set,
	so that  ${\{F^n_m\}}_{m\geq 1}$ is as desired.
\end{proof}

\section{Quasi-regular Dirichlet forms on $\mac$ and $\pac$}\label{sec:imageF}	

In this part, we   construct general type quasi-regular Dirichlet forms on $\mac$ and $\pac$, which include the jumping, diffusion and killing parts.
To this end, we will choose reference probability measures $\LL$ as image of the map $\Psi$ defined in \eqref{PSI}, which links the Fr\'echet derivative in $L^2(M,\lam)$ and the (convexity) extrinsic derivative in $\mac$ ($\pac$).

 \subsection{Jump type quasi-regular Dirichlet forms  }

As a first application of Theorem \ref{thm:QR}, we consider jump type Dirichlet forms on $\mac$ or $\pac$ and show their quasi-regularity.

\begin{thm}\label{thm:Ju}
	\begin{enumerate}
	\item[$(1)$] Let $\LL_\M$ be a Borel probability measure on $\mac$ and $J$ be a $\sigma$-finite Radon measure on $\mac\times\mac$ such that
	$$\int_{\mac\times\mac}\big[1\land \rr_{var}(\gamma,\eta)^2\big]\d J(\gamma,\eta)<\infty,\ \ J\ll \LL_\M\times\LL_\M.$$
	Then the  bilinear form
	\begin{equation*}
		\EE^J_\M(u,v)=\int_{\mac\times\mac}(u(\gamma)-u(\eta))(v(\gamma)-v(\eta))\d J(\gamma,\eta),\quad u,v\in \Ce_b(\M),
	\end{equation*}
	is closable in $L^2(\mac,\LL_\M)$, and its closure is a quasi-regular Dirichlet form.
	
\item[$(2)$] Let $\LL_\scr P$ be a Borel probability measure on $\pac$ and $J$ is a $\sigma$-finite Radon measure on $\pac\times\pac$ such that
	$$\int_{\pac\times\pac}\big[1\land \rr_{var}(\gamma,\eta)^2\big]\d J(\gamma,\eta)<\infty,\ \ J\ll \LL_\scr P\times\LL_\scr P.$$
	Then the bilinear form
	\begin{equation*}
	\EE^J_\scr P(u,v)=\int_{\pac\times\pac}(u(\gamma)-u(\eta))(v(\gamma)-v(\eta))\d J(\gamma,\eta),\quad u,v\in \Ce_b(\scr P),
	\end{equation*}
	is closable in $L^2(\pac,\LL_\scr P)$, and its closure is a quasi-regular Dirichlet form.
	\end{enumerate}
\end{thm}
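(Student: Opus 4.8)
The plan is to establish the three assertions in turn: finiteness and closability of the pre-form on $\Ce_b(\M)$ (resp.\ $\Ce_b(\scr P)$), the Markov property of its closure, and finally quasi-regularity via the criterion of Theorem \ref{thm:QR}. The workhorse throughout is the elementary increment bound coming from the fundamental theorem of calculus along the linear interpolation $\mu_t:=(1-t)\eta+t\gamma$, $t\in[0,1]$: extrinsic differentiability gives $u(\gamma)-u(\eta)=\int_0^1\int_M\De u(\mu_t)\,\d(\gamma-\eta)\,\d t$ (explicit via \eqref{eq:extDer} for cylinder functions), whence $|u(\gamma)-u(\eta)|\le\rr_{var}(\gamma,\eta)\sup_{(\mu,x)}|\De u(\mu)(x)|$. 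Combined with $|u(\gamma)-u(\eta)|\le2\|u\|_\infty$ this yields $(u(\gamma)-u(\eta))^2\le C_u\,(1\land\rr_{var}(\gamma,\eta)^2)$, so the Lévy-type hypothesis $\int(1\land\rr_{var}^2)\,\d J<\infty$ already makes $\EE^J_\M(u,u)$ finite on $\Ce_b(\M)$.

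For closability I would realise the form on $L^2(\mac\times\mac,J)$ through $u\mapsto\bar u$, $\bar u(\gamma,\eta):=u(\gamma)-u(\eta)$, so that $\EE^J_\M(u,u)=\|\bar u\|_{L^2(J)}^2$. If $u_n\to0$ in $L^2(\mac,\LL_\M)$ and $(u_n)$ is $\EE^J_\M$-Cauchy, then along a subsequence $u_n\to0$ $\LL_\M$-a.e., hence $\bar u_n\to0$ $(\LL_\M\times\LL_\M)$-a.e., and by $J\ll\LL_\M\times\LL_\M$ also $J$-a.e.; on the other hand $\bar u_n\to\bar w$ in $L^2(J)$, so comparing the two limits forces $\bar w=0$ $J$-a.e., i.e.\ $\EE^J_\M(u_n,u_n)\to0$. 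This is the standard closability argument for nonlocal forms and uses $J\ll\LL_\M\times\LL_\M$ in an essential way. The Markov property of the closure is immediate from the same picture: any normal contraction $T$ (in particular $u\mapsto 0\lor u\land1$) satisfies $|Tu(\gamma)-Tu(\eta)|\le|u(\gamma)-u(\eta)|$ pointwise, hence $\EE^J_\M(Tu,Tu)\le\EE^J_\M(u,u)$, which extends to $\D(\EE^J_\M)$ by approximation.

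It remains to prove quasi-regularity via the criterion of Theorem \ref{thm:QR}. The density of a set of quasi-continuous functions holds because $\Ce_b(\M)$ is dense in the closure by construction and consists of $\rr_{var}$-continuous, hence quasi-continuous, functions; the separation property Q3) is supplied by Lemma \ref{lem:Psep}. For the capacity-tightness input I would run the construction from the proof of Theorem \ref{thm:QR}: the auxiliary functions $u_{i,j,k}$, $w_l$ and their lattice combinations have globally bounded extrinsic derivatives, so the Lipschitz bound of the first paragraph, fed into Lemma \ref{lem:Lip}, controls their $\EE^J_\M$-energies uniformly exactly as in \eqref{eq:QR3} and \eqref{eq:QR6}, and the same argument produces a nest of compact sets. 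Part (2) is analogous, with $\Det$ replacing $\De$; here the increment identity $u(\gamma)-u(\eta)=\int_0^1\int_M\Det u(\mu_t)\,\d(\gamma-\eta)\,\d t$ together with the mean-zero property $(\gamma-\eta)(M)=0$ on $\pac$ gives at once the clean bound $|u(\gamma)-u(\eta)|\le\rr_{var}(\gamma,\eta)\sup_{\mu}\|\Det u(\mu)\|_{L^\infty(\mu)}$, so part (2) is routine.

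The step I expect to be the main obstacle is the energy estimate on $\mac$ in part (1). In contrast with the $\pac$ case, the criterion there is phrased in the weak $(L^1+L^\infty)$-norm, and a naive uniform Lipschitz bound $|u(\gamma)-u(\eta)|\lesssim\rr_{var}(\gamma,\eta)\,\sup_\mu\|\De u(\mu)\|_{L^1(\mu)+L^\infty(\mu)}$ cannot be expected for all cylinder functions: when $\gamma-\eta$ concentrates on a set carrying little $\mu_t$-mass --- that is, when mass is added in a direction nearly singular to the interpolating measure --- the increment is controlled only through the $L^1$-part, which pairs badly with the total-variation norm, and the endpoint contributions $t\downarrow0$, $t\uparrow1$ in the interpolation do not integrate. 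The way around this is to use the criterion only for the specific tightness functions, whose derivatives are uniformly bounded, so that there the crude $L^\infty$-Lipschitz bound is enough; making this reduction precise, and checking that the infima, suprema and maxima appearing in the construction preserve the requisite uniform bounds through Lemma \ref{lem:Lip}, is the technical heart of the argument.
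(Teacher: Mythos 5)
Your proposal is correct, and its skeleton coincides with the paper's proof: the interpolation-based increment bound (the paper quotes \cite[Lem.~3.3 \& 3.4]{RW21} for general $u\in\Ce_b(\M)$, resp.\ $\Ce_b(\scr P)$, which you also need---the explicit formula \eqref{eq:extDer} only covers $\cyl$, while finiteness of $\EE^J_\M$ must hold on all of $\Ce_b(\M)$); closability via the map $u\mapsto u(\gamma)-u(\eta)$ into $L^2(J)$ combined with $J\ll\LL_\M\times\LL_\M$ and an a.e.\ convergent subsequence; the standard contraction argument for Markovianity; and quasi-regularity through Lemma \ref{lem:Psep} and Theorem \ref{thm:QR}. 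The one genuine divergence is the step you call the main obstacle. The paper applies Theorem \ref{thm:QR}(1) directly to the bound $\EE^J_\M(u,u)\le C'\big(\sup_{\mu\in\mac}\|\De u(\mu)\|^2_{L^\infty(\mu)}+\|u\|^2_\infty\big)$ coming from \eqref{AA1}, with no further work. This is consistent because the paper uses $\|\cdot\|_{L^1(\mu)+L^\infty(\mu)}$ to mean the sum $\|\cdot\|_{L^1(\mu)}+\|\cdot\|_{L^\infty(\mu)}$, not the infimum norm of the Banach space $L^1+L^\infty$ that you (reasonably) assume: this is visible in \eqref{eq:QRwl}, where $\|\eins_\M\|_{L^1(\mu)+L^\infty(\mu)}$ is evaluated as $1+\mu(M)$, and in the remark that the condition of Theorem \ref{thm:QR}(1) is effective only on $\cyl_{1}$, i.e.\ vacuous when the $L^1$-part is unbounded. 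With that convention $\|\cdot\|_{L^1(\mu)+L^\infty(\mu)}\ge\|\cdot\|_{L^\infty(\mu)}$, so \eqref{AA1} verifies the hypothesis of the criterion at once and no workaround is needed. Under your reading of the norm the obstacle is real---the jump form need not obey such a bound, essentially for the reason you give---and your fix, namely running the tightness construction of Theorem \ref{thm:QR}(1) only on $u_{i,j,k}$, $w_l$ and their Lipschitz/product combinations, whose extrinsic derivatives are uniformly bounded in $L^\infty(\mu)$, fed through Lemma \ref{lem:Lip}, is valid and makes the argument independent of which norm the criterion is stated in. So both routes succeed: the paper's is a one-line application of its criterion under its notational convention, while yours costs a partial re-run of the criterion's proof but is robust to the ambiguity.
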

\begin{proof}
(a) By \cite[Lem.~3.3]{RW21} we get
   \begin{align*}
   	   \big|u(\gamma)-u(\eta)\big|&=\bigg|\int_0^1\int_M\De u\big((1-r)\eta-r\gamma\big)\d(\gamma-\eta)\d r\bigg|\\
	   &\leq \rho_{var}(\gamma,\eta)\sup_{\mu\in\mac}\|\De u(\mu)\|_{L^\infty(\mu)},\ \ \ u\in \Ce_b(\M).\ \gamma,\eta\in\mac,
   \end{align*}
   while due to   \cite[Lem.~3.4]{RW21} we have
   \begin{align*}
   	\big|u(\gamma)-u(\eta)\big|&=\bigg|\int_0^1\int_M\Det u\big((1-r)\eta-r\gamma\big)\d(\gamma-\eta)\d r\bigg|\\
	&\leq \rho_{var}(\gamma,\eta)\sup_{\mu\in\pac}\|\Det u(\mu)\|_{L^\infty(\mu)},\ \ \  u\in \Ce_b(\scr P),\ \gamma,\eta\in\pac.
   \end{align*}
 Therefore,
\begin{equation}\label{AA1}\beg{split} 
	&\big|u(\gamma)-u(\eta)\big|\\
	&\leq \big[1\land \rho_{var}(\gamma,\eta)\big]\sup_{\mu\in\mac}\big[2|\mu(\mu)|+ \|\De u(\mu)\|_{L^\infty(\mu)}\big],\ \ \gamma,\eta\in\mac,\    u\in \Ce_b(\M),\end{split} \end{equation}
\begin{equation}\label{AA2}\beg{split} 
	&\big|u(\gamma)-u(\eta)\big|\\
	&\leq \big[1\land\rho_{var}(\gamma,\eta)\big] \sup_{\mu\in\pac}\big[2|u(\mu)|+\|\Det u(\mu)\|_{L^\infty(\mu)}\big],\ \ \gamma,\eta\in\pac,\ u\in \Ce_b(\scr P).\end{split} 
\end{equation}

(b)	  Let ${\{u_n\}}_{n\ge 1}\subset \Ce_b(\M)$ such that
	$$
	\lim_{n\to\infty} \LL_\M(u_n^2) = \lim_{n,m\to\infty}  \EE_\M^J(u_n-u_m,u_n-u_m)=  0.
	$$
	We have to show $\EE_\M^J(u_n,u_n)\to 0$ as $n\to\infty$. By $ \lim_{n,m\to\infty} \EE_\M^J(u_n-u_m,u_n-u_m)\to 0$, we see that 
	$$w_n(\gg,\eta):= u_n(\gg)- u_n(\eta),\ \ n\ge 1$$ is a Cauchy sequence in   $L^2(\mac\times\mac, J)$. So,  there exists $w\in L^2(\mac\times\mac, J)$ such that
	\begin{equation}\label{*D}
		\lim_{n\to\infty} \int_{\mac\times\mac}\big|u_{n}(\gamma)-u_{n}(\eta)-w(\gamma,\eta)|^2\d J(\gamma,\eta)= 0.
	\end{equation}
	Moreover, by $u_n\to 0$ in $L^2(\mac,\LL_\M)$, we find a    subsequence ${\{u_{n_k}\}}_{k\ge 1}$ such that
	\begin{equation*}
		u_{n_k}(\mu) \to 0,\ \ \LL_\M\text{-a.e.~}\mu\in\mac,\,k\to\infty.
	\end{equation*}
	Hence, \eqref{*D} and $J\ll \LL_\M\times\LL_\M$ imply that $w(\gamma,\eta)=0$ for $J$-a.e.~$(\gamma,\eta)\in\mac\times\mac$, and moreover
	$$\lim_{n\to\infty} \EE_\M^J(u_n,u_n)=0,$$ so that $(\EE_\M^J, C_b^{E,1}(\M))$ is closable.
	 The Markovian property of $\EE^J_\M$ follows from a standard argument (see e.g.~\cite[Ex.I.1.2.1]{FOT11}), while the quasi-regularity is a consequence of Theorem \ref{thm:QR} and  \eqref{AA1}.
	 Therefore,   assertion (1) is proved. The second assertion can be proved similarly, by using \eqref{AA2} in place of \eqref{AA1}.
\end{proof}

\subsection{Diffusion type quasi-regular   Dirichlet forms  }
Let
$$H:=L^2(M,\lam),\ \ H_0:= H\setminus \{0\}.$$  We formulate the map $\Psi$ as follows with different image spaces:
\begin{equation}\label{MP1}
	\Psi_\M:H\ni h\mapsto h^2\lam\in\mac,
\end{equation}
\begin{equation}\label{MP2}
	\Psi_\scr P: H_0 \ni h\mapsto \frac{h^2\lam}{\lam(h^2)}\in\pac.
\end{equation}
Then, fixing a Borel probability measure $\LL_0$ on $H$ with $\LL_0(\{0\})=0$, we obtain the following reference probability measures on $\mac$ and $\pac$ respectively:
\begin{equation*}
	\LL_\M:=\LL_0\circ\Psi^{-1}_\M,\qquad\LL_\scr P:=\LL_0\circ\Psi_{\scr P}^{-1},
\end{equation*}
where  the latter can also be regarded as a probability  on $\mac$ with $\LL_\scr P(\pac)=1$, or as a probability on $\pac$. Topics related to the closability and quasi-regularity of bilinear forms are not affected by which point of view we choose, since $\pac$ is a closed subset of $\mac$.

Let
$$C_{b,2}^{E,1}(\M):=\Big\{u\in C_b^{E,1}(\M):\ \sup_{\mu\in \mac} \|\De u(\mu)\|_{L^2(\mu)}<\infty\Big\}.$$
Obviously,  $C_{b,2}^{E,1}(\M)$ contains the class of functions
$$\M\ni  \mu\mapsto  \varphi(\mu(M)) u(\mu),\ \ u\in C_b^{E,1}(\M), \varphi\in C_0^1([0,\infty)),$$
which is dense in $L^2(\mac, \LL_\M)$ for any probability measure $\LL$ on $\mac$.  Here, $C_0^1$ is the class of $C^1$ functions with compact support.

We will present conditions on $\LL_0$ such that the bilinear form
\beq\label{M}  \EE_\M(u,v):= \int_\mac  \mu\big((D^E u)(D^Ev)\big)\d\LL_\M(\mu),\ \
   u,v\in C_{b,2}^{E,1}(\M) \end{equation}
is closable in $L^2(\mac,\LL_\M)$ and its closure is a quasi-regular Dirichlet form, and correspondingly,
\beq\label{P} \EE_{\scr P} (u,v):= \int_\pac \mu\big((\tt D^E u)(\tt D^Ev)\big)\d\LL_{\scr P} (\mu),\ \ u,v\in C_b^{E,1}(\scr P)\end{equation}
is closable in $L^2(\pac,\LL_\M)$ and  its closure is a quasi-regular Dirichlet form.

More generally, we  consider   weighted bilinear forms with  a  $``$diffusion coefficient" $ A$ defined as the following two families.

\beg{enumerate}\item[$(A_\M)$]   {\bf Family $\{A(\mu)\}_{\mu\in \mac}$}. Each $A(\mu)$   is a  positively definite bounded linear operator on $L^2(M,\mu)$, such that
 for any $\eta\in C_b(\mac\times M),$  \beq\label{BB}   (A  \eta)(\mu,x):= A(\mu) \eta(\mu,\cdot)(x)  \end{equation}
is measurable in $(\mu,x)\in \mac\times M,$  and
$$ \int_{\mac}   \|A(\mu)\|_{L^2(\mu)}   \d\LL_\M(\mu)<\infty.$$
 \item[$(A_{\scr P})$] {\bf Family  $\{A(\mu)\}_{\mu\in \pac}$}.   Each $A(\mu)$ is a positively definite bounded linear operator on $L^2(M,\mu)$
 such that
for any $\eta\in C_b(\pac\times M),$  $ (A  \eta)(\mu,x)$ defined in \eqref{BB}
is measurable in $(\mu,x)\in \pac\times M,$  and
$$ \int_{\pac}   \|A(\mu)\|_{L^2(\mu)}   \d\LL_\scr P(\mu)  <\infty.$$   \end{enumerate}

 We now consider the bilinear forms
\beg{align*}& \EE_\M^A(u,v):= \int_{\mac}  \big\<A(\mu)  D^E u(\mu),  D^Ev(\mu) \big\>_{L^2(\mu)} \d\LL_\M(\mu),\ \ u,v\in C_{b,2}^{E,1}(\M),\\
& \EE_{\scr P}^A(u,v):= \int_{\pac }  \big\<A(\mu)  \tt D^E u(\mu),  \tt D^Ev(\mu) \big\>_{L^2(\mu)} \d\LL_{\scr P} (\mu),\ \ u,v\in C_b^{E,1}(\scr P).\end{align*}
 It is easy to see that for any $h,h'\in H$ with $h\ne 0$, we have $h'h^{-1}\in H:=L^2(M,\mu)$ for $\mu=h^2\lam$, and
\beq\label{A0} \beg{split} &h'\mapsto A^\M_h h':=h A\big(\Psi_\M(h)\big) (h'h^{-1}),\\
& h'\mapsto A_h^{\scr P}  h':=h A\big(\Psi_{\scr P}(h)\big) (h'h^{-1}) \end{split}  \end{equation}  give  two  families  $\{A_h^\M\}_{h\in H_0}$ and $\{A_h^{\scr P}\}_{h\in H_0}$
of positive definite bounded linear operators on $H.$

Let  $C^1(H)$  be the space of   Fr\'echet differentiable  functions $f$ on $H$   with continuous     derivative $\nabla f: H\to H$, and let
\beg{align*}&C_{b}^1(H):= \big\{f\in C^1(H):\ \|f\|_\infty+\big\|\|\nn f(\cdot)\|_H\big\|_\infty<\infty\big\},\\
& C_{b,0}^1(H)=\Big\{f\in C^1(H):\ \|f\|_\infty+\big\|\|\cdot\|_H \|\nn f(\cdot)\|_H \big\|_\infty  <\infty\Big\}. \end{align*}
 Then in    situations   $(A_{\M})$ and  $(A_{\scr P})$ we may define    bilinear forms
 \beq\label{D0} \beg{split}&\EE^{0,A}_{\M}(f,g):= \int_{H} \<A_h^\M \nn f(h), \nn g(h)\>_H\d\LL_0(h),\ \ f,g\in C_b^1(H),\\
&\EE^{0,A}_{\scr P}(f,g):= \int_{H} \<A_h^{\scr P}   \nn f(h), \nn g(h)\>_H\|h\|^2_H\d\LL_0(h),\ \ f,g\in C_{b,0}^1(H).\end{split}\end{equation}
 The following result links $\EE_\M^A$ and $\EE_{\scr P}^A$ to these two  bilinear forms   respectively.

\beg{thm}\label{TA}
  Let $H, H_0, \LL_0,   \EE^{0,A}_\M, \EE^{0,A}_\scr P, \EE_\M^A$ and $\EE_{\scr P}^A$ be introduced above.
 \beg{enumerate} \item[$(1)$] In situation $(A_\M)$, for any  $u\in C_{b,2}^{E,1}(\M),$ we have $u\circ \Psi_\M\in C_b^1(H)$ and
\beq\label{DM1} \EE_\M^A(u,v)= \ff 1 4 \EE_\M^{0,A} (u\circ\Psi_\M, v\circ\Psi_\M),\ \ \ \ u,v\in C_{b,2}^{E,1}(\M).  \end{equation}
Consequently, if $(\EE_\M^{0,A},C_b^1(H))$ is closable in $L^2(H,\LL_0)$, so is $(\EE_\M^A, C_{b,2}^{E,1}(\M))$ in $L^2(\mac,\LL_\M),$ and its closure $(\EE_{\M}^A,\D(\EE_{\M}^A))$ is a local quasi-regular Dirichlet form.

\item[$(2)$] In situation $(A_{\scr P})$, for any  $u\in C_b^{E,1}(\scr P),$ we have $u\circ \Psi_{\scr P}  \in C_{b,0}^1(H)$ and
\beq\label{DM2} \EE_{\scr P}^A(u,v)= \ff 1 4 \EE_{\scr P}^{0,A} (u\circ\Psi_{\scr P}, v\circ\Psi_{\scr P}),\ \ \ \ u,v\in C_b^{E,1}(\scr P).  \end{equation}
Consequently, if $(\EE_{\scr P}^{0,A},C_{b,0}^1(H))$ is closable in $L^2(H,\LL_0)$, so is $(\EE_{\scr P}^A, C_b^{E,1}(\scr P))$ in $L^2(\pac,\LL_{\scr P}),$
and its closure $(\EE_{\scr P}^A,\D(\EE_{\scr P}^A))$ is a local quasi-regular Dirichlet form.
\end{enumerate}
  \end{thm}

To prove Theorem \ref{TA}, we present  the following lemma on chain rules  of  the Fr\'echet derivative for composed functions with $\Psi_\M$ and $\Psi_\scr P$, which lead  to \eqref{DM1} and \eqref{DM2} respectively.

\begin{lem}\label{lem:chainR}
	\begin{itemize}
	\item[$(1)$] If $u\in \Ce_{b,2}(\mac)$, then $u\circ \Psi_{\M}\in C^1(H) $ with
	\beq\label{*P0}
		 \nabla (u\circ\Psi_\M)(h) = 2 h (\De u)(\Psi_\M(h)), \, \quad h \in H.
	\end{equation}
	In particular,
	\begin{equation*}
		\big\|\nabla (u\circ\Psi_\M)(h)\big\|_{H}=2\big\|\De u(\Psi_\M(h))\big\|_{L^2(\Psi_\M(h))},\quad h\in H.
	\end{equation*}
	\item[$(2)$] If $u\in\Ce_b(\pac)$, then
	$u\circ \Psi_\scr P\in C_{b,0}^1(H) $ with
	\beq\label{*P1}
		 \nabla (u\circ\Psi_\scr P)(h)  =  \ff{2 h}{\|h\|_H^2} \Det u(\Psi_\scr P(h)), \, \quad h\in H_0.
	\end{equation}
	In particular,
	\begin{equation*}
		\big\|\nabla (u\circ\Psi_\scr P)(h)\big\|_{H}=\frac{2\big\|\Det u(\Psi_\scr P(h))\big\|_{L^2(\Psi_\scr P(h))}}{\|h\|_{H}},\quad h\in H_0.
	\end{equation*}
	\end{itemize}
\end{lem}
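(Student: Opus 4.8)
The plan is to compute the Gâteaux derivatives of $u\circ\Psi_\M$ and $u\circ\Psi_{\scr P}$ by means of the fundamental theorem of calculus for the (convexity) extrinsic derivative recorded in \cite[Lem.~3.3]{RW21} and \cite[Lem.~3.4]{RW21}, and then to upgrade these to Fréchet derivatives by controlling the first-order remainder with the uniform $L^\infty$-bounds on $\De u$ and $\Det u$ built into the class $C_b^{E,1}$. For part $(1)$, I fix $h,g\in H$ and set $\eta:=\Psi_\M(h)=h^2\lam$ and $\gamma_t:=\Psi_\M(h+tg)=(h+tg)^2\lam$, so that $\gamma_t-\eta$ has $\lam$-density $2tgh+t^2g^2$. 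Applying \cite[Lem.~3.3]{RW21} along the segment $\mu_r:=(1-r)\eta+r\gamma_t\in\mac$ gives
\[\frac{u(\gamma_t)-u(\eta)}{t}=\int_0^1\int_M \De u(\mu_r)(x)\big(2gh+tg^2\big)(x)\d\lam(x)\d r.\]
Since $\Psi_\M$ is continuous in total variation (indeed $\rho_{var}(\gamma_t,\eta)\le |t|\|g\|_H(2\|h\|_H+|t|\|g\|_H)$), the measures $\mu_r$ tend to $\eta$ uniformly in $r$ as $t\to 0$; using continuity of $\De u$ on $\M\times M$, the bound $\|\De u\|_\infty<\infty$ and $gh,g^2\in L^1(\lam)$, dominated convergence identifies the limit as $\<2h\De u(\eta),g\>_H$, which is \eqref{*P0}. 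The norm identity is then immediate, and since $\|2h\De u(\Psi_\M(h))\|_H=2\|\De u(\Psi_\M(h))\|_{L^2(\Psi_\M(h))}$ is bounded on $H$ by the defining bound of $\Ce_{b,2}(\mac)$, it follows that $u\circ\Psi_\M\in C_b^1(H)$.

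Part $(2)$ runs along the same lines with $\Psi_{\scr P}$, except that differentiating the normalization $\lam(h^2)$ creates one extra term. Writing $\eta:=\Psi_{\scr P}(h)$, the $t$-derivative of the density of $\Psi_{\scr P}(h+tg)$ at $t=0$ equals $2gh\|h\|_H^{-2}-2h^2\<h,g\>_H\|h\|_H^{-4}$, so \cite[Lem.~3.4]{RW21} together with dominated convergence yields
\[\big\<\nn(u\circ\Psi_{\scr P})(h),g\big\>_H=\frac{2}{\|h\|_H^2}\int_M \Det u(\eta)\,gh\,\d\lam-\frac{2\<h,g\>_H}{\|h\|_H^4}\int_M \Det u(\eta)\,h^2\,\d\lam.\]
The second integral equals $\|h\|_H^2\,\eta(\Det u(\eta))$, and here I invoke the key normalization $\eta(\Det u(\eta))=0$: because the $\eta$-average of the perturbing measures $(1-\vv)\eta+\vv\dd_x$ is again $\eta$, the linear interpolation $\vv\mapsto(1-\vv)\eta+\vv\eta$ is constant, forcing $\int_M\Det u(\eta)\,\d\eta=0$ (this is transparent for cylinder functions from \eqref{eq:extDer2}). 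Hence the second term vanishes and \eqref{*P1} follows, and since $\|h\|_H\|\nn(u\circ\Psi_{\scr P})(h)\|_H=2\|\Det u(\eta)\|_{L^2(\eta)}\le 2\|\Det u(\eta)\|_{L^\infty(\eta)}$ is uniformly bounded, we get $u\circ\Psi_{\scr P}\in C_{b,0}^1(H)$.

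The step I expect to be the main obstacle is the passage from Gâteaux to Fréchet differentiability, i.e.~showing that the linear functionals found above are the genuine Fréchet derivatives. I would prove this by estimating the remainder at increment $g$ directly through the same calculus: in the $\M$-case, $u(\Psi_\M(h+g))-u(\Psi_\M(h))-\<2h\De u(\Psi_\M(h)),g\>_H$ splits into a $g^2$-part bounded by $\|\De u\|_\infty\|g\|_H^2=o(\|g\|_H)$ and a part dominated by $2\big(\int_0^1\|[\De u(\mu_r^g)-\De u(\Psi_\M(h))]h\|_H\,\d r\big)\|g\|_H$, where $\mu_r^g:=(1-r)\Psi_\M(h)+r\Psi_\M(h+g)$. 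As $\|g\|_H\to 0$ one has $\mu_r^g\to\Psi_\M(h)$ uniformly in $r$, so by continuity of $\De u$ and dominated convergence with majorant $2\|\De u\|_\infty|h|\in L^2(\lam)$ the bracketed prefactor tends to $0$, which gives the required $o(\|g\|_H)$ estimate. The $\scr P$-case is handled analogously, the only genuinely new ingredient being the vanishing-mean identity $\eta(\Det u(\eta))=0$ exploited above.
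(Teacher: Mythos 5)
Your proof is correct and takes essentially the same approach as the paper's: both rest on the integral representation of increments via the (convexity) extrinsic derivative from \cite{RW21}, linearization of the (normalized) squared density, and dominated convergence based on boundedness and joint continuity of $\De u$, resp.\ $\Det u$, with the normalization identity $\Psi_{\scr P}(h)\big(\Det u(\Psi_{\scr P}(h))\big)=0$ cancelling the extra term in part $(2)$ exactly as in the paper's display \eqref{eq:chainR7} (where the paper likewise asserts it without proof). The only difference is presentational: you exhibit the G\^ateaux derivative first and then verify the Fr\'echet remainder estimate, whereas the paper bounds the Fr\'echet remainder in a single step.
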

\begin{proof} $(1)$ Let $u\in\Ce_{b,2}(\M)$ and $h\in H$. For $\phi\in H_0$, we set
	\begin{equation*}
		\mu_{\phi,r}:=(1-r)\Psi_\M (h)+r\Psi_\M (h+\phi),\quad r\in(0,1).
	\end{equation*}	
	Then, by  \cite[Lemma 3.2]{RW21}, it holds
	\beq\label{eq:chainR1}\beg{split}
		&(u\circ\Psi_\M)(h+ \phi)-(u\circ\Psi_\M)(h)=\int_0^1\ff{\d u(\mu_{\phi,s})}{\d s}\Big|_{s=r}\d r\\
		&=\int_0^1\d r \int_{M}\De u(\mu_{\phi,r})  \d\big(\Psi_\M(h+\phi)-\Psi_\M(h)\big) \\
		&=\int_0^1\d r \int_{M}\De u(\mu_{\phi,r})\cdot\big((h+\phi)^2-h^2\big)\d\lam.
	\end{split}\end{equation}
	Since $u\in C_b^{E,1}(\M)$, $\De u$ is bounded and
	$$
		\lim_{\|\phi\|_{H}\downarrow   0}\De u(\mu_{\phi,r}) =\De u(\Psi_\M(h)).$$
	By combining this with \eqref{eq:chainR1} and
	$$
		\lim_{\|\phi\|_{H}\downarrow   0}\ff{\|(h+\phi)^2-h^2-2h\phi\|_{L^1(\lam)}}{\|\phi\|_{H}}=0,$$ 	we may apply   the triangular inequality and the dominated convergence theorem to get
	\beg{align*}
		&\limsup_{\|\phi\|_{H}\downarrow   0} \bigg|\frac{(u\circ\Psi_\M)(h+ \phi)-(u\circ\Psi_\M)(h)- 2{\big\langle} \De u(\Psi_\M(h))h,
			\phi{\big\rangle}_{H}} {\|\phi\|_{H}} \bigg| \\
			& \leq
		\limsup_{\|\phi\|_{H}\downarrow   0}	\int_0^1\lam\bigg(|\De u(\mu_{\phi,r})|\cdot\bigg|\ff{(h+\phi)^2-h^2-2h\phi}{\|\phi\|_{H}}\bigg|\bigg)\d r\\
		&\quad +\limsup_{\|\phi\|_{H}\downarrow   0}\int_0^1\bigg|2\bigg\langle h\De u(\mu_{\phi,r})-h\De u(\Psi_\M(h)),\ff{\phi}{\|\phi\|_{H}}{\bigg\rangle}_H\bigg|\d r=0.
	\end{align*}
	So, $u\circ \Psi_\M$ is Frech\'et differentiable on $H$ and \eqref{*P0} holds. This together with the continuity of $\Psi_\M$ and  $u \in C_{b,2}^1(\mac)$ implies that
	$u\circ\Psi_\M\in C_b^1(H).$

	$(2)$ Let $u\in\Ce_b(\scr P)$ and $  h\in H_0$. For $\phi\in H_0\setminus\{-h\}$, we set
	\begin{equation*}
		\mu_{\phi,r}:=(1-r)\Psi_\scr P(h)+r\Psi_\scr P(h+\phi),\quad r\in (0,1),
	\end{equation*}	
	and
	\begin{equation*}
		\nu_{\phi,r}:=(1+r)\Psi_\scr P(h+\phi)-r\Psi_\scr P(h),\quad r\in(0,1).
	\end{equation*}	
	Then, by  \cite[Lemma 3.3]{RW21}, it holds
	\beq\label{eq:chainR4}\beg{split}
		&(u\circ\Psi_\scr P)(h+ \phi)-(u\circ\Psi_\scr P)(h)
		 =\int_0^1\ff{\d u(\mu_{s,\phi})}{\d s}\Big|_{s=r}\d r\\
		 &= \int_0^1\ff{\d u(\mu_{r,\phi}+s(\nu_{r,\phi}-\mu_{r,\phi}))}{\d s}{\Big|}_{s=0}\d r
		 =\int_0^1\d r \int_{M}\Det u(\mu_{r,\phi}) )\d(\nu_{r,\phi}-\mu_{r,\phi}) \\
		&=\int_0^1\d r \int_{M}\Det u(\mu_{r,\phi})\bigg(\ff{(h+\phi)^2}{\|h+\phi\|_H^2}-\ff{h^2}{\|h\|_H^2}\bigg)\d\lam.
	\end{split}\end{equation}
	It is easy to see that  	\begin{equation}\label{eq:chainR5}
		\lim_{\|\phi\|_{H}\downarrow   0}\ff{1}{\|\phi\|_H}\bigg\|\ff{(h+\phi)^2}{\|h+\phi\|_H^2}-\ff{h^2}{\|h\|_H^2}-
		\Big(\ff{2h\phi}{\|h\|_H^2}-\ff{2h^2\langle h,\phi\rangle_H}{\|h\|_H^4}\Big)\bigg\|_{L^1(\lam)}=0
	\end{equation}
	Besides, $u\in C_b^{E,1}(\scr P)$ implies that $\Det u$ is bounded and
	$$
		\lim_{\|\phi\|_{H}\downarrow   0}\Det u(\mu_{\phi,r}) =\Det u(\Psi_\scr P(h)).$$
	Combining this with \eqref{eq:chainR4},  \eqref{eq:chainR5} and the fact that
	\begin{equation}\label{eq:chainR7}
		\Big\langle h\Det u(\Psi_\scr P(h)),\ff{h\langle h,\phi\rangle_H}{\|h\|_H^4}\Big\rangle_H=
		\ff{\langle h,\phi\rangle_H}{\|h\|_H^2}\int_{M}\Det u\Big(\ff{h^2}{\|h\|_H^2}\lam\Big)\ff{h^2}{\|h\|_H^2}\d \lam=0,
	\end{equation}
	we may apply  the triangular inequality and the dominated convergence theorem to derive
	\begin{align*}
		&\limsup_{\|\phi\|_{H}\downarrow   0}\ff{1}{\|\phi\|_{H}} \bigg|(u\circ\Psi_\scr P)(h+ \phi)-(u\circ\Psi_\scr P)(h)-\frac{ 2{\big\langle} \Det u(\Psi_\scr P(h))h,\phi\rangle_H}{\|h\|_H^2}
			  \bigg| \\
	&\leq
		\limsup_{\|\phi\|_{H}\downarrow   0}
			\int_0^1\lam\bigg(|\Det u(\mu_{\phi,r})|\cdot\ff{1}{\|\phi\|_{H}}\bigg|\ff{(h+\phi)^2}{\|h+\phi\|_H^2}-\ff{h^2}{\|h\|_H^2}-
		\Big(\ff{2h\phi}{\|h\|_H^2}-\ff{2h^2\langle h,\phi\rangle_H}{\|h\|_H^4}\Big)\bigg|\bigg)\d r\\
		&\quad +\limsup_{\|\phi\|_{H}\downarrow   0}\int_0^1\bigg|2\bigg\langle h\Det u(\mu_{\phi,r})-h\Det u(\Psi_\scr P(h)),
		\ff{\phi}{\|h\|_H^2\|\phi\|_{H}}-\ff{h\langle h,\phi\rangle_H}{\|h\|_H^4\|\phi\|_{H}}{\bigg\rangle}_H\bigg|\d r=0.
	\end{align*}
	The   remainder of the proof is similar to case (1).
	 \end{proof}

\beg{proof}[Proof of Theorem \ref{TA}]
(1)  Let $u,v\in C_{b,2}^{E,1}(\M)$. By Lemma \ref{lem:chainR}(1),
 \eqref{A0} and \eqref{D0}, we obtain
\beg{align*}&\EE_\M^{0,A} (u\circ\Psi_\M, v\circ\Psi_\M)
 =4\int_H \big\<A(\Psi_\M(h))   D^Eu(\Psi_\M(h)),   D^E v(\Psi_\M(h))\big\>_{L^2(\Psi_\M(h))} \d\LL_0(h)\\
&=  4\int_{\mac} \big\<A(\mu)  D^Eu(\mu),   D^E v(\mu)\big\>_{L^2(\mu)} \d\LL_\M(\mu)
=4 \EE_\M^A(u,v).\end{align*}
So,   \eqref{DM1} holds.

If $(\EE_\M^{0,A},C_b^1(H))$ is closable in $L^2(H,\LL_0)$, then the closability of $(\EE_\M^A, C_{b,2}^1(\mac))$ in $L^2(\mac,\LL_\M)$ is an immediate consequence of the equalities \eqref{DM1} and
	\begin{equation}\label{eq:EEmageL2}
	\|u\|_{L^2(\mac,\LL_\M)}=\|u\circ\Psi_\M\|_{L^2(H,\LL_0)},\quad u\in L^2(\mac,\LL_\M),
	\end{equation} while the coercivity and
	the  Markovian property is inherited from $\EE_\M^{0,A}$ to $\EE_\M^A$ by   \eqref{DM1} as a consequence of \cite[Prop.~I.4.7 \& I.4.10]{MR92}. So,
	  the closure $(\EE_\M^A,\D(\EE_\M^A))$ is a symmetric Dirichlet from with
	 	\begin{equation}\label{eq:EEmageDomain}
		\EE_\M^A(u,v)=\EE_\M^{0,A}(u\circ\Psi_\M,v\circ\Psi_\M),\ \  u,v\in \D(\EE_\M^A)\subseteq \big\{u:u\circ\Psi_\M\in\D(\EE_\M^{0,A})\big\}.
	\end{equation}
	Hence,  by the continuity of $\Psi_\M$,  the local property is inherited from $\EE_\M^{0,A}$ to $\EE_\M^A$ as well.
	
	Moreover, the quasi-regularity follows since the condition in Theorem \ref{thm:QR}(1) holds for $$ C= \int_{\mac}  \|A(\mu)\|_{L^2(\mu)} \d\LL_\M(\mu).$$

	(2) Let $u,v\in\Ce_b(\pac)$. By Lemma \ref{lem:chainR}(2),  \eqref{A0} and \eqref{D0}, we obtain
\beg{align*}&\EE_{\scr P}^{0,A} (u\circ\Psi_{\scr P}, v\circ\Psi_{\scr P})= 4\int_H \ff 1 {\lam(h^2)} \big\<h A(\Psi_{\scr P}(h))  \tt D^Eu(\Psi_{\scr P}(h)), h \tt D^E v(\Psi_{\scr P}(h))\big\>_H\d\LL_0(h)\\
&=4\int_H   \big\<A(\Psi_{\scr P}(h))   \tt D^Eu(\Psi_{\scr P}(h)),   \tt D^E v(\Psi_{\scr P}(h))\big\>_{L^2(\Psi_{\scr P}(h))} \d\LL_0(h)  \\
&=  4\int_{\pac} \big\<A(\mu)   \tt D^Eu(\mu),  \tt D^E v(\mu)\big\>_{L^2(\mu)} \d\LL_{\scr P}(\mu)
=4 \EE_{\scr P}^A(u,v).\end{align*}   So, \eqref{DM2} holds.
 Then the remainder of the proof is similar to case (1).
\end{proof}

\begin{rem}\label{rem:predom}  When   $\int_{\mac} \mu(M) \d \LL_\M(\mu)<\infty$, in   situation  $(A_\M)$ we have $C_b^{E,1}(\M)\subset \D(\EE_\M^A)$
and
 \begin{equation}\label{eq:extension1}
		\EE_\M^A(u,v):= \int_{\mac}  \big\<A(\mu)  D^E u(\mu),  D^Ev(\mu) \big\>_{L^2(\mu)} \d\LL_\M(\mu),\ \ u,v\in C_{b}^{E,1}(\M).
	\end{equation}
To see this, let  $ u\in C_{b}^{E,1}(\M)$ and choose  $\varphi_n\in C^1_0([0,\infty))$ for $n\in\N$ such that
 $$\eins_{[0,n]}\leq \varphi_n\leq \eins_{[0,n+1]},\ \
	 \varphi'_n\leq \eins_{[n,n+1]}.$$  Then for each $n\ge 1$   the function
	$$
	\mu\mapsto 	u_n(\mu):=\varphi_n(\mu(M))u(\mu)$$ is in the class $C_{b,2}^{E,1}(\M),$ such that
		  \beg{align*}
		&\limsup_{n\to\infty}\int_{\mac}   \Big(|u(\mu)-u_n(\mu)|^2+\big\|\De u(\mu,\cdot)-\De u_n(\mu,\cdot)\big\|_{L^2(\mu)}^2\Big)\d\LL_\M(\mu)\\
		&\leq\limsup_{n\to\infty}\Big( \big\|\De u(\cdot,\cdot)\|_\infty^2+2\|u(\cdot)\|^2_\infty\Big)\LL_\M\big(\mu(M)\eins_{[n,\infty)}(\mu(M))\big)=0.
	\end{align*}
	Therefore,
	$u\in\D(\EE_\M^A)$ and   \eqref{eq:extension1} holds.
\end{rem}

\beg{exa}\label{exa:Ksign}
Let $K$ be a bounded, positively definite linear operator on $H$ with
\begin{equation}\label{eq:Knorm}
	k_1\|f\|_H^2\leq\langle Kf,f\rangle_H\leq k_2\|f\|_H^2,\ \ f\in H
\end{equation}
for constants $k_1,k_2\in(0,\infty),$ and let $\LL_0$ be a non-degenerate Gaussian measure on $H$, i.e.~having full topological support.
If we set
\begin{equation*}
	A(\mu)g:=h_\mu^{-\ff 1 2}K\big( g {h_\mu}^{\ff 1 2 } \big), \ \ g\in L^2(\mu),\,\mu\in\mac,
\end{equation*}
then the assumptions of Theorem \ref{TA}  are satisfied. Indeed,  by  \eqref{A0} we have
\begin{align*}
	 \langle K(\sgn(h)h'),\sgn(h)h'\rangle_H&=\langle A_h^{\M} h',h'\rangle_H,\\
	 \langle K(\sgn(h)h'),\sgn(h)h'\rangle_H&=\langle A_h^{\scr P} h',h'\rangle_H,
\end{align*}  so that
it suffices to apply \eqref{eq:Knorm} to $f:=\sgn(h)h'$ and use the closability in $L^2(H,\LL_0)$ of the Gaussian Dirichlet forms
\begin{equation*}
C_b^1(H)\times C_b^1(H)\ni (f,g)\mapsto \int_{H} \< \nn f(h), \nn g(h)\>_H\d\LL_0(h),
\end{equation*}
\begin{equation*}
C_{b,0}^1(H)\times C_{b,0}^1(H)\ni (f,g)\mapsto	\int_{H} \< \nn f(h), \nn g(h)\>_H\|h\|^2_H\d\LL_0(h),
	\end{equation*}
according to   \cite{AR} or \cite[Chapt.~II]{MR92}. The generators of these forms are explicit on dense subspace of $L^2(\LL_0)$, as remarked in Example \ref{3.6} below.
	
	Hence, by  Theorem \ref{TA} and Remark \ref{rem:predom}, the bilinear forms
	\beg{align*}& \EE_\M^A(u,v)= \int_{\mac}  \lambda\Big(\sqrt{h_\mu} D^Ev(\mu)  K\big(\sqrt {h_\mu}  D^E u(\mu)\big)  \Big) \d\LL_\M(\mu),\ \ u,v\in C_{b}^{E,1}(\M),\\
	& \EE_{\scr P}^A(u,v)= \int_{\pac }    \lambda\Big(\sqrt{h_\mu} \Det v(\mu)  K\big(\sqrt {h_\mu}  \Det u(\mu)\big)  \Big)  \d\LL_{\scr P} (\mu),\ \ u,v\in C_b^{E,1}(\scr P) \end{align*}
	are closable in $L^2(\mac,\LL_\M)$, respectively $L^2(\pac,\LL_{\scr P})$, and their closures are quasi-regular Dirichlet forms.
	
\end{exa}

\beg{exa}[{\bf O-U type processes}]  \label{3.6}  \

Let  $\LL_0 $ be  a non-degenerate Gaussian measure on $H:=L^2(M,\lll)$
with covariance operator $Q^{-1}$, which is positive definite operator on $H$ of trace class.
Let
$\LL_\M$ and $\LL_{\scr P}$ be in \eqref{MP1} and \eqref{MP2}.

Then the bilinear forms defined in \eqref{M} and \eqref{P} are closable in $L^2(\mac,\LL_\M)$ and $L^2(\pac,\LL_{\scr P} )$ respectively, and their closures
\beg{align*} & \EE_\M(u,v)=\int_{\mac}\big\<\De u(\mu), \De v(\mu)\big\>_{L^2(\mu)} \LL_\M(\d\mu),\ \ u,v\in \D(\EE_\M),\\
 &\EE_{\scr P}(u,v)=\int_{\pac}\big\<\Det u(\mu), \Det v(\mu)\big\>_{L^2(\mu)} \LL_{\scr P} (\d\mu),\ \ u,v\in \D(\EE_{\scr P})\end{align*}
are local quasi-regular   Dirichlet forms.
The associated diffusion processes via Corollary \ref{cor:C1} are called O-U type processes on $\mac$ and $\pac$ respectively.

Let $\{q_n>0\}_{n\ge 1}$ be all eigenvalues of $Q$ listed in the increasing order. Then the proofs of \cite[Corollary 4.1]{RWW24} and \cite[Theorem 3.2]{RW22} imply that
$   (\EE_\M,\D(\EE_\M) )$     has  the following properties:
\beg{enumerate} \item[ $(1)$] The following log-Sobolev inequality holds:
$$  \LL_\M(u^2\log u^2)\le \ff 2 {q_1} \EE_\M(u,u),\ \ u\in \D(\EE_\M),\ \LL_\M(u^2)=1.$$
Consequently,  the associated Markov semigroup $P_t$     converges exponentially to $\LL_\M$ in entropy:
$$   \LL_\M((P_tu)\log P_t u)\le \e^{-2q_1 t} \LL_\M (u\log u),\ \ t\ge 0,\ 0\le u, \ \LL_\M(u)=1.$$
Moreover, it is hypercontractive:
  \beg{align*}&\|P_t\|_{L^r(\LL_\M)\to L^{r_t}(\LL_\M)}:=\sup_{\|f\|_{L^r(\LL_\M)}\le 1} \|P_t f\|_{L^{r_t}(\LL_\M)}\le 1,\\
&\qquad \ \ t>0, \ r>1, \ r_t:= 1+ (r-1)\e^{2q_1 t}. \end{align*}
 \item[ $(2)$]    The generator  has purely discrete spectrum,  and the  Markov semigroup   $P_t$ has density $\{p_t\}_{t\geq 0}$ with respect to $\LL_\M$ satisfying
\beg{equation*} \int_{\mac\times \mac} p_t(\mu,\nu)^2 \,\d\LL_\M(\mu) \,\d\LL_\M(\nu)
\le  \prod_{n\in \mathbb N}  \Big(1+ \ff{2\e^{-2q_n t}}{(2q_n t)\land 1}\Big)<\infty,\ \ t>0.\end{equation*}
 \item[$(3)$]Correspondingly, if one can verify a functional inequality for the Dirichlet form $\EE_\scr P^{0, id}$ defined in \eqref{D0} for $A= id$ being the identity may, then
  $ (\EE_{\scr P},\D(\EE_{\scr P}) )$  shares the same functional inequality and consequent properties. We leave this open for now and close with a remark on the generator of $\EE_\scr P^{0, id}$.
  If $\{\varphi_i\}_{i\in\N}$ denotes an orthonormal basis of eigenvectors of $Q$,
  then the generators of the Gaussian forms $\EE_\M^{0, id}$, $\EE_\scr P^{0, id}$on $H$ are explicit on the linear space
  \begin{equation*}
  	\scr FC_b^2(H,Q):=\big\{F(\langle \varphi_1,\cdot\rangle_H,\dots,\langle \varphi_n,\cdot\rangle_H):n\in\N,F\in C^2_b(\R^n)\big\}
  \end{equation*}
  via the equations
  \begin{equation*}
  	\int_{H} \< \nn f(h), \nn g(h)\>_H\d\LL_0(h)=-\LL_0(fL^{\text{OU}}_0g)
  \end{equation*}	
  and	
  \begin{equation*}
  	\int_{H} \< \nn f(h), \nn g(h)\>_H\|h\|_H^2\d\LL_0(h)=-\int_H f(h)\big(\|h\|^2L^{\text{OU}}_0g-2\langle h,\nabla g(h)\rangle_H\big)\d\LL_0(h)
  \end{equation*}	
  with
  $$L^{\text{OU}}_0g(h):=\Delta g(h)-2\<Q\nabla g(h), h\>_H,\ \ f,g\in\scr FC_b^2(H,Q),$$  where  for $f=F(\langle \varphi_1,\cdot\rangle_H,\dots,\langle \varphi_n,\cdot\rangle_H),$
  \begin{align*}
  	\nabla g&=\sum_{i=1}^n\partial_i F(\langle \varphi_1,h\rangle_H,\dots,\langle \varphi_n,h\rangle_H)\varphi_i\\
  	\Delta g&=\sum_{i=1}^n\partial_i^2 F(\langle \varphi_1,h\rangle_H,\dots,\langle \varphi_n,h\rangle_H).
  \end{align*}
    \end{enumerate}
\end{exa}

\subsection{General type quasi-regular Dirichlet forms}

As a consequence    of Theorem \ref{thm:QR}, Theorem \ref{thm:Ju} and Theorem \ref{TA},  we have the following  result which  provides a general class of  the quasi-regular  Dirichlet forms on $\mac$ respectively $\pac$, which include the  jumping, killing and diffusion parts,   and hence generate a general class of Markov processes possibly with finite life times, according to    the theory of Dirichlet forms, see \cite{MR92}.

\begin{cor}\label{3.2}
	\begin{enumerate}
	 \item[$(1)$] In the situation of Theorem \ref{TA}(1), if $(\EE_\M^{0,A},C_b^1(H))$ is closable in $L^2(H,\LL_0)$,
	 then for any non-negative $V\in L^1(\mac,\LL_\M)$ and $J$ as in Theorem \ref{thm:Ju}(1),  the bilinear form
		\begin{multline*}
			\EE(u,v)=\int_{\mac\times\mac}(u(\gamma)-u(\eta))(v(\gamma)-v(\eta))\d J(\gamma,\eta)+\int_{\mac} uvV\d\LL_\M\\+\int_{\mac}  \big\<A(\mu)  D^E u(\mu),  D^Ev(\mu) \big\>_{L^2(\mu)} \d\LL_\M(\mu),
			\quad u,v\in C_{b,2}^{E,1}(\M),
		\end{multline*}
		is closable in $L^2(\mac,\LL_\M)$, and its  closure is a quasi-regular Dirichlet form.
		\item[$(2)$] In the situation of Theorem \ref{TA}(2), if $(\EE_{\scr P}^{0,A},C_{b,0}^1(H))$ is closable in $L^2(H,\LL_0)$, then for any non-negative $V\in L^1(\pac,\LL_\scr P)$ and $J$ as in Theorem \ref{thm:Ju}(2),  the bilinear form
		\begin{multline*}
			\EE(u,v)=\int_{\pac\times\pac}(u(\gamma)-u(\eta))(v(\gamma)-v(\eta))\d J(\gamma,\eta)+\int_{\pac} uvV\d\LL_\scr P\\+\int_{\pac }  \big\<A(\mu)  \tt D^E u(\mu),  \tt D^Ev(\mu) \big\>_{L^2(\mu)} \d\LL_{\scr P} (\mu),
			\quad u,v\in C_{b}^{E,1}(\scr P),
		\end{multline*}
		is closable in $L^2(\mac,\LL_\scr P)$, and its  closure is a quasi-regular Dirichlet form.
	\end{enumerate}
	\end{cor}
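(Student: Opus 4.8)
The bilinear form in (1) is the sum of three pieces on the common domain $C_{b,2}^{E,1}(\M)$: the jump form $\EE^J_\M$ of Theorem \ref{thm:Ju}(1), the killing form $\EE^V(u,v):=\int_\mac uvV\,\d\LL_\M$, and the diffusion form $\EE^A_\M$ of Theorem \ref{TA}(1). The plan is to obtain closability, the Dirichlet property, and quasi-regularity of the sum by combining these two theorems with an elementary treatment of the killing term. All three pieces are symmetric and non-negative on $C_{b,2}^{E,1}(\M)$: $\EE^J_\M$ is closable by Theorem \ref{thm:Ju}(1) (its closability on $C_b^{E,1}(\M)$ restricts to the subspace $C_{b,2}^{E,1}(\M)$), $\EE^A_\M$ is closable by Theorem \ref{TA}(1) under the assumed closability of $\EE^{0,A}_\M$, and $\EE^V(u,u)=\|\sqrt V\,u\|^2_{L^2(\LL_\M)}$ is closable by a routine almost-everywhere subsequence argument. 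Since a finite sum of closable, non-negative, symmetric forms on a common domain is again closable --- if $u_n\to 0$ in $L^2(\LL_\M)$ is $\EE$-Cauchy, then each piece is Cauchy by non-negativity and hence tends to $0$ --- the sum is closable; write $(\EE,\D(\EE))$ for its closure.

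I would then verify that $(\EE,\D(\EE))$ is a Dirichlet form and reduce everything to quasi-regularity. Coercivity holds with $K=1$ for any symmetric non-negative form, and the Markov property is checked piecewise (the jump and diffusion pieces are Markovian, while $(u^+\wedge 1)^2\le u^2$ gives $\LL_\M((u^+\wedge 1)^2V)\le\LL_\M(u^2V)$ for the killing piece) and survives the passage to the closure. For quasi-regularity I would apply Theorem \ref{thm:QR}(1), whose density requirement (Q2) is immediate: $C_{b,2}^{E,1}(\M)$ is $\EE_1$-dense in $\D(\EE)$ by construction and consists of functions continuous on $(\mac,\rho_{var})$, hence quasi-continuous.

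It then remains to produce the capacity bound of Theorem \ref{thm:QR}(1), i.e.~a constant $C$ with $\EE(u,u)\le C(\sup_{\mu\in\mac}\|\De u(\mu)\|^2_{L^1(\mu)+L^\infty(\mu)}+\|u\|^2_\infty)$ for $u\in\cyl$, and the idea is to add the three pieces' bounds. The diffusion bound is exactly the one established in the proof of Theorem \ref{TA}(1), with constant $\int_\mac\|A(\mu)\|_{L^2(\mu)}\,\d\LL_\M$; the killing bound is the elementary $\EE^V(u,u)\le\|V\|_{L^1(\LL_\M)}\|u\|^2_\infty$; and the jump bound is precisely the one used to establish quasi-regularity of $\EE^J_\M$ in Theorem \ref{thm:Ju}(1) through \eqref{AA1}. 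Summing yields the estimate with $C=\int_\mac\|A(\mu)\|_{L^2(\mu)}\,\d\LL_\M+\|V\|_{L^1(\LL_\M)}+C_J$, where $C_J$ is the jump constant, so Theorem \ref{thm:QR}(1) applies and the closure is quasi-regular.

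The delicate step, and the one I would treat most carefully, is this capacity bound, for two reasons. First, \eqref{AA1} controls the jump part through $\sup_\mu\|\De u(\mu)\|_{L^\infty(\mu)}$, whereas Theorem \ref{thm:QR}(1) is phrased with the smaller norm $\|\De u(\mu)\|_{L^1(\mu)+L^\infty(\mu)}$; this compatibility is exactly what is already needed in Theorem \ref{thm:Ju}(1), so it is inherited rather than re-proved. Second, a general $u\in\cyl$ need not lie in $\D(\EE)$. For a self-contained argument I would not assert a global estimate over all of $\cyl$ but instead re-run the nest construction of Theorem \ref{thm:QR}(1), applying the three bounds only to its truncated test functions $(1-w_l)u_{i,j,k}$: these lie in $C_{b,2}^{E,1}(\M)\subseteq\D(\EE)$, are uniformly bounded with uniformly bounded extrinsic derivative, and are supported on $\{\mu:\mu(M)\le\e^{l+2}\}$, so the diffusion part is controlled by the bounded mass, the killing part by $\|V\|_{L^1(\LL_\M)}$, and the jump part by \eqref{AA1}; the same min--max argument then produces an $\EE$-nest of compact sets. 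Part (2) follows verbatim, with $\Det$, $\LL_\scr P$, $C_b^{E,1}(\scr P)$, \eqref{AA2} and Theorem \ref{thm:QR}(2) replacing their $\mac$-counterparts, the probability normalization leaving $\|\Det u\|_{L^\infty(\mu)}$ as the only relevant derivative norm.
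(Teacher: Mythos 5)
Your proposal is correct and takes essentially the same route as the paper's (very condensed) proof: the paper likewise splits the form into the jump, killing and diffusion pieces, bounds the killing term by $\|u\|_\infty^2\|V\|_{L^1(\LL_\M)}$, and obtains closability and quasi-regularity by combining the proofs of Theorem \ref{thm:Ju} and Theorem \ref{TA} with Theorem \ref{thm:QR}. Your write-up simply makes explicit the details the paper leaves implicit (closability of a sum of closable non-negative forms, the piecewise Markov property, and the summed capacity estimate applied to the truncated test functions of the nest construction).
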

\beg{proof} Noting that the killing term in $\EE(u,u)$ is bounded  above by $\|u\|_\infty^1 \|V\|_{L^1}^2, $ according to Theorem   \ref{thm:QR}, the desired assertions follow by combining the proofs  of Theorem
\ref{thm:Ju} and  Theorem \ref{TA}.
\end{proof}

\section{Solving stochastic extrinsic derivative flows}\label{sec:DerFlow}

In this part, we introduce and solve stochastic extrinsic derivative flows driven by the O-U type processes introduced in Example \ref{3.6}.

 \subsection{Stochastic extrinsic derivative flows}

{\bf A. Stochastic extrinsic derivative flows  on $\mac$.}  Let $R_t^\M$ be the O-U type process on $\mac$ introduced in Example \ref{3.6}, which is associated with the Dirichlet form $(\EE_\M,\D(\EE_\M))$, the  closure of the bilinear form in \eqref{M}
  with $\LL_\M=G\circ\Psi_\M^{-1}$ for  a non-degenerate Gaussian measure $G$ on $H$.
  Let $(L_\M, \D(L_\M))$ be the generator of $(\EE_\M,\D(\EE_\M))$.

   For any $\bb\in \D(\EE_\M)$, we consider the stochastic process ${(\mu_t)}_{t\geq 0}$ on $\mac$ which solves  the stochastic differential equation
\begin{equation}\label{eq:gradFl}
	\d \mu_t= - \De \beta (\mu_t)+\d R_t^\M,\quad t\ge 0,
\end{equation}
where $(\De,\D(\EE_\M))$ is the closure of $(\De, C_b^{E,1}(\M))$ in $L^2(\mac,\LL_\M)$. The closability follows from that of $(\nn, C_b^1(H))$ in $L^2(H,\LL_0)$   for
 the non-degenerate Gaussian measure $\LL_0$ on $H$, together with $\LL_\M=\LL_0\circ\Psi_\M^{-1}$ and \eqref{*P0}.   Hence, for any $u\in \D(\EE_\M)$,
$$ \mu\mapsto (\De \beta)^\# u(\mu)  = \<\De \bb(\mu), \De u(\mu)\>_{L^2(\mu)}  $$
belongs to $L^1(\mac,\LL_\M)$,  and coincides with $\GG_\M(\bb,u)$, for
  $(\GG_\M,\D(\EE_\M))$  the square field of  $(\EE_\M,\D(\EE_\M))$.

To define the solution of \eqref{eq:gradFl}, let us recall the following SDE on $\R^d$:
$$\d X_t =-\nn V(X_t)\d t+\d R_t$$
where $V\in C^1(\R^d)$ such that $\mu_V(\d x):= \e^{-V(x)}N(\d x)$ is a probability measure,
$N$ is the standard Gaussian measure on $\R^d$, and $R_t$ is the O-U type process on $\R^d$ generated by
$L:= \DD- x\cdot\nn$.  Then the generator of $X_t$  is formulated as
$$L^V=L-(\nn V)\cdot\nn,$$ which is symmetric in $L^2(\R^d,\e^{-V}\d N)$ for $\LL$ being the
 invariant measure of the O-U process (i.e. the standard Gaussian measure on $\R^d$).
  So,
intuitively,  the generator of a solution to \eqref{eq:gradFl} is   defined as
 $$L_\M^\bb:= L_\M -(\De \beta)^\#,$$ which should be symmetric in $L^2(\mac,\LL_\M^\bb)$ for
 $$\LL_\M^\bb:= \ff{\e^{-\bb}\LL_\M}{\LL_M(\e^{-\bb})}.$$
 Here, we need to assume that $\e^{-\bb}\in L^1(\mac,\LL_\M).$
 Noting that when $\bb\in C_b^{E,1}(\M)$ we have $\e^{-\bb} v\in C_b^{E,1}(\M)$ for $v\in C_b^{E,1}(\M)$,
 so that by the chain rule of $\GG_\M$ and the integration by parts for $L_\M$, for any $u\in \D_e(L_\M)$ we have
\beq\label{KM} \beg{split} &\int_{\mac} \GG_\M(u,v) \e^{-\bb} \d \LL_\M =
\int_{\mac} \big[\GG_\M(u, v\e^{-\bb})- v\GG_\M(u, \e^{-\bb})\big]\d\LL_\M\\
&= -\int_{\mac} v \big[L_\M u -  \GG_\M(u,\bb)\big]\e^{-\bb} \d\LL_\M= -\int_{\mac} (v L_\M^\bb u)\e^{-\bb} \d \LL_\M.\end{split}\end{equation}
Therefore,   we define as follows the pre-domain $\D_0(L_\M^\bb)$ of the generator $L_\M^\bb$, as well as  the martingale solution of \eqref{eq:gradFl}.

\beg{defn}\label{defn:martingaleSol} Let $\bb\in \D(\EE_\M)$ with $\e^{-\bb}\in L^1(\mac,\LL_\M).$
\beg{enumerate}\item[$(1)$] We denote $u\in \D_0(L_\M^\bb),$ if $u\in C_b^{E,1}(\M)$ and there exists
$g\in L^1(\mac,\LL_\M^\bb)$ such that
$$\int_{\mac}\GG_\M(u,v) \d\LL_\M^\bb = -\int_{\mac} g v \d\LL_\M^\bb,\ \ v\in C_b^{E,1}(\M).$$
In this case we denote $g= L_\M^\bb u$.
\item[$(2)$]
A martingale solution of \eqref{eq:gradFl} is a family $\{\P_\mu\}_{\mu\in\mac}$ of probability measures on  the path space $C([0,\infty); \mac)$ such that the coordinate process
 $$\mu_t(\oo):=\oo_t,\ \ \oo\in \OO:=C([0,\infty); \mac),\ t\ge 0$$
 satisfies $\P^\mu(\mu_0=\mu)=1$  for every $\mu\in\mac$, and
 for any $u\in\D_0(L_\M^\bb)$,
 $$ u (\mu_t)- u(\mu_0)-\int_0^t L_\M^\bb u(\mu_s)\d s,\qquad t\geq 0$$
	is an $\{\scr F_t\}_{t}$-martingale under $\P_\mu$ for $\LL_\M$-a.e.~$\mu\in\mac$, where $\F_t$ is the filtration induced by   $ \mu_t$.
\end{enumerate}
\end{defn}
By \eqref{KM}, when $\bb\in C_b^{E,1}(\M)$ we have $C_b^{E,1}(\M)\cap  D_e(L_\M)\subset  \D_0(L_\M^\bb),$ and
$$L_\M^\bb u= \big(L_\M   - (D^E\bb)^\#\big) u,\ \ u\in C_b^{E,1}(\M)\cap D_e(L_\M).$$
The first assertion in the following result is a direct consequence of Corollary \ref{cor:C1}, since $\D_0(L_\M^\bb)\subset \D_e(L_\M^\bb)$ when $L_\M^\bb$ is a Dirichlet operator.

\beg{prp}\label{NP}  Let $\bb\in \D(\EE_\M)$ with $\e^{-\bb}\in L^1(\mac,\LL_\M).$
\beg{enumerate}
\item[$(1)$] If the bilinear form
\begin{equation}\label{eq:EEM}
	\EE_\M^\beta(u,v):=\int_{\mac}\langle \De u(\mu),\De v(\mu)\rangle_{L^2(\mu)}\d \LL^\beta_\M(\mu),\quad u,v\in \Ce_{b}(\M),
\end{equation} is closable in $L^2(\mac,\LL^\beta_\M)$, and its closure $(\EE_\M^\beta,\D(\EE_\M^\beta))$ is a local quasi-regular   Dirichlet form, then the associated diffusion process via Corollary \ref{cor:C1} is a martingale solution to \eqref{eq:gradFl}.
\item[$(2)$] If there exists an Hilbert space $H_G \hookrightarrow H$ (i.e. continuously embedded into $H$)  such that $G(H_G)=1$ and $\e^{-\bb\circ\Psi_\M}$ is lower semi-continuous on $H_G$, then
$(\EE_\M^\bb, C_b^{E,1}(\M))$ defined in $\eqref{eq:EEM} $ is closable, and hence provides  a martingale solution to \eqref{eq:gradFl}.\end{enumerate}
\end{prp}

\beg{proof}  It suffices to prove the second assertion.
Since $G$ is supported on $H_G$,  \begin{equation*}
\LL_0^\beta:=\ff {\e^{-\beta\circ\Psi_\M} G }{G(\e^{-\beta\circ\Psi_\M})}
\end{equation*} is a well defined   probability measure on $H_G$ such that
 $$\LL_\M^\beta=\LL_0^\beta\circ\Psi_\M^{-1}.$$  Consider the bilinear form
\begin{equation*}
\EE^0(u,v):=\ff 1 4\int_{H}\langle \nabla u(f),\nabla v(f)\rangle_{H}\d \LL^\beta_0(f),
\ \ u,v\in C_b^1(H),
\end{equation*}
in $L^2(H_{G},G)$. Since $\e^{-\bb\circ\Psi_M}$ is  lower semi-continuous on $H_G$, so is   $(0,\infty)\ni s\mapsto\e^{-\beta\circ\Psi_\M(f+sg)} $  for any $f,g\in H_{G}.$ Then the Hamza condition holds (see \cite[Sect.~2]{AR} and \cite[Chapt.~II]{MR92}).  Noting that $C_b^1(H)\subseteq C_b^1(H_{G})$ as $\|\cdot\|_H \le c\|\cdot\|_{H_G}$ holds for some constant $c>0$, this and \cite[Thm.~3.2 \& Cor.~3.6]{AR}  imply
the closability of $\EE^0$ in $L^2(H_{G},G)$. Since    $L^2(H_{G},G)=L^2(H,G)$ and the   closability doesn't depend on the particular choice of the topology in the state space,
we may apply Theorem \ref{TA}(1) together with Remark \ref{rem:predom} to conclude the proof
\end{proof}


{\bf B. Stochastic extrinsic derivative flows on $\pac$.}
 Let $\tt R_t^\scr P$ be the O-U type process on $\M^{ac}$ introduced in Example \ref{3.6}, but with
\beq\label{LLP} \LL_0:= \ff{\|\cdot\|_H^p G}{G(\|\cdot\|_H^p)}  \end{equation}
for a constant $p\ge 0$ in order to cancel possible singularities appeared in the study, where $G$ is a non-trivial Gaussian measure on $H$.
As explained in Example \ref{exa:Ksign},   $(\EE_\scr P^{0,id}, C_b^1(H))$ for $A=id$  defined in \eqref{D0} is closable, so that    Theorem \ref{TA} provides a
local quasi-regular   Dirichlet form    $(\EE_\scr P,\D(\EE_\scr P))$ in $L^2(\pac,\LL_\scr P)$, as the  closure of the bilinear form defined  in \eqref{P}.
   Let $(L_\scr P, \D(L_\scr P))$ be the generator.

As explained above for $\De$ that   $(\Det, \D(\EE_{\scr P}))$ is  the closure of $(\Det, C_b^{E,1}(\scr P))$ in $L^2(\scr P,\LL_{\scr P})$, and  for any
$\bb,u\in \D(\EE_{\scr P})$,
$$(\Det \bb)^\# u(\mu):= \<\Det\bb(\mu), \Det u(\mu)\>_{L^2(\mu)}=:\GG_\scr P(u,v)$$ gives an element   in $L^1(\pac,\LL_\scr P)$. For fixed
$\bb\in\D(\EE_{\scr P}),$ consider  the following stochastic differential equation on $\scr P^{ac}$:
\begin{equation}\label{4.2'}
	\d \mu_t= - \Det \beta (\mu_t)+\d \tt R_t^\scr P,\quad t\ge 0.
\end{equation}
 Similarly to \eqref{eq:gradFl}, the martingale solution of \eqref{4.2'} is defined as follows.

\beg{defn}\label{defn:martingaleSol2} Let $\bb\in \D(\Det)$ with $\e^{-\bb}\in L^1(\pac,\LL_\scr P).$ For $\LL_0$ in \eqref{LLP}, let
$$\LL_{\scr P}^\bb:=\ff{\e^{-\bb}\LL_{\scr P}^\bb}{\LL_{\scr P}^\bb(\e^{-\bb})},\ \ \LL_{\scr P}:=\LL_0\circ\Psi_{\scr P}^{-1}.$$
\beg{enumerate}\item[$(1)$] We denote $u\in \D_0(L_{\scr P}^\bb),$ if $u\in C_b^{E,1}(\pac)$ and there exists
$g\in L^1(\pac,\LL_{\scr P}^\bb)$ such that
$$\int_{\mac}\GG_{\scr P}(u,v) \d\LL_{\scr P}^\bb = -\int_{\pac} g v \d\LL_{\scr P}^\bb,\ \ v\in C_b^{E,1}(\scr P).$$
In this case we denote $g= L_{\scr P}^\bb u$.
\item[$(2)$]
A martingale solution of \eqref{4.2'} is a family $\{\P_\mu\}_{\mu\in\pac}$ of probability measures on  the path space $C([0,\infty); \pac)$ such that the coordinate process
 $$\mu_t(\oo):=\oo_t,\ \ \oo\in \OO:=C([0,\infty); \pac),\ t\ge 0$$
 satisfies $\P^\mu(\mu_0=\mu)=1$  for every $\mu\in\pac$, and
 for any $u\in\D_0(L_{\scr P}^\bb)$,
 $$ u (\mu_t)- u(\mu_0)-\int_0^t L_{\scr P}^\bb u(\mu_s)\d s,\qquad t\geq 0$$
	is an $\{\scr F_t\}_{t}$-martingale under $\P_\mu$ for $\LL_{\scr P}$-a.e.~$\mu\in\pac$, where $\F_t$ is the filtration induced by   $ \mu_t$.
\end{enumerate}
 \end{defn}

The following assertion can be proved similarly as Proposition \ref{NP}.

\beg{prp}\label{NP'}  Let $\bb\in \D(\Det)$ with $\e^{-\bb}\in L^1(\pac,\LL_\scr P).$
\beg{enumerate}
\item[$(1)$] If the bilinear form
$$
	\EE_{\scr P}^\beta(u,v):=\int_{\pac}\langle \Det u(\mu),\Det v(\mu)\rangle_{L^2(\mu)}\d \LL^\beta_{\scr P}(\mu),\quad u,v\in \Ce_{b}(\scr P),
$$ is closable in $L^2(\pac,\LL^\beta_{\scr P})$, and its closure $(\EE_{\scr P}^\beta,\D(\EE_{\scr P}^\beta))$ is a local quasi-regular  Dirichlet form, then the associated diffusion process via Corollary \ref{cor:C1} is a  martingale solution to \eqref{4.2'}.
\item[$(2)$] If there exists an Hilbert space $H_G$ continuously embedded into $H$ such that $G(H_G)=1$ and $\e^{-\bb\circ\Psi_\scr P}$ is lower semi-continuous on $H_G$, then
$(\EE_{\scr P}^\bb, C_b^{E,1}(\scr P))$   is closable, and hence provides  a martingale solution to \eqref{4.2'}.\end{enumerate}
\end{prp}

 \subsection{Stochastic extrinsic derivative flows on $\mac$ over $\R^d$}

In this part, we take $M=\R^d$ and  let $\lll(\d x)=\d x$ be the Lebesgue measure.
We will solve  \eqref{eq:gradFl} for a class of $\bb$ satisfying the following conditions, which  includes   the entropy functional
 $$\mu\mapsto \lll(h_\mu\log h_\mu)\ \text{for}\ \ h_\mu:=\ff{\d\mu}{\d\lll}$$ as a typical example,    see Example \ref{exa:entropy} below.
 \begin{enumerate}
	\item[$(\beta1)$] There exist constants  $k\in (0,\infty)$ and $\theta\in \big(1,\ff{d}{(d-2)^+}\big]\cap (1,\infty)$ such that for any $\mu\in \mac,$
	  \begin{equation*}
	  	\lam\big( (1+|\cdot|^{2k})h_\mu+ h_\mu^{\ff 1 2}+ h_\mu^{\theta}\big)<\infty
	  	\end{equation*}
	  implies  $\beta(\mu)<\infty$.
	\item[$(\beta2)$] There are constants $\aa_1,\aa_2\in[0,\infty)$ such that for $k$ in $(\beta1)$,
	\begin{equation*} -\beta(\mu)\leq \aa_1\mu\big(1+|\cdot|^{2k} \big)+\aa_2\lam\big(h_\mu^{\ff 12}\big),\ \ \mu\in \mac.\end{equation*}
	 	\item[$(\beta3)$] Let $k,\theta$ be as in $(\bb 1)$. If ${\{\mu_n,\mu \}}_{n\ge 1}\subset\mac$    such that  $ (1+|\cdot|^{2k})h_\mu+h_\mu^{\ff 12}+h_\mu^{\theta} \in L^1(\lll)$    and
	\begin{gather*}
		 \lim_{n\to\infty}\Big(\big\| (h_{\mu_n}-  h_\mu)(1+|\cdot|^{2k})\big\|_{L^1(\lam)}+\big\| h_{\mu_n}^{\ff 12}-  h_\mu^{\ff 12}\big\|_{L^1(\lam)}+\big\|h_{\mu_n}^{\theta}-h_\mu^\theta\big\|_{L^{1}(\lam)}\Big)=0,
	\end{gather*}
 then
	\begin{equation*}
		\e^{-\beta(\mu)}\leq\liminf_{n\to\infty}\e^{-\beta(\mu_n)}.
	\end{equation*}
 \end{enumerate}

Given  $\beta$ satisfying $(\beta1)$-$(\beta3)$, we intend to construct a Gaussian measure $\LL_0=G$ on   $H_G$ with $H_G\hookrightarrow H,$ such that
$\bb$ satisfies conditions in Proposition \ref{NP}.

On the Schwartz space of rapidly decaying functions we define the operator
\begin{equation*}
	T_\alpha f(x):=-\Delta f (x)+(|x|^2+\alpha)f(x),\quad x\in\R^d,\quad f\in \scr S(\R^d),
\end{equation*}
with parameter $\alpha\in(0,\infty)$, which corresponds to the Hamiltonian of the harmonic oscillator plus a multiple of the identity operator.
Then, $T_\alpha$ extends (uniquely) to a positive, self-adjoint operator $(T_\alpha,\D(T_\alpha))$ on $H:=L^2(\R^d,\lll)$ with eigenvalues
\begin{equation}\label{eq:eigenvalues}
	2(n_1+\dots+n_d)+d+\alpha,\quad n_1,\dots,n_d\in\N\cup\{0\}.
\end{equation}
The corresponding eigenfunctions $f_{n_1,\dots,n_d}\in\scr S(\R^d)$ for $n_1,\dots,n_d\in\N\cup\{0\}$ are given by
\begin{equation*}
	f_{n_1,\dots,n_d}(x)=Z_{n_1,\dots,n_d}^{-1}\,e^{-\ff {|x|^2}{2}}\prod_{i=1}^d\scr H_{n_i}(x_i),
\end{equation*}
where $\scr H_n$ denotes the $n$-th Hermite polynomial and $Z_{n_1,\dots,n_d}^{-1}$ the normalization constant in $H$.
This is a consequence of the fact that the Hermite functions
\begin{equation*}
	\xi_n:\R\ni s\mapsto {\big(\pi^{1/2}2^n n!\big)}^{-1/2}e^{-\ff {|s|^2}{2}}\scr H_{n}(s),\quad n\in\N\cup\{0\},
\end{equation*}
satisfy
\begin{equation*}
	-\xi_n''(s)+s^2\xi_n(s)=(2n+1)\xi_n(s),\quad s\in\R,
\end{equation*}
and form an orthonormal basis of $L^2(\R,\d s)$, see e.g.~\cite[Appendicies A.1 \& A.5]{HKPS}.

For $k\in (0,\infty),$
$(T_\alpha^{k/2 },\D(T_\alpha^{k/2 }))$ is a   positive, self-adjoint operator on $H$.
The Gaussian measure $\LL_0=G_{k,\aa}$ we choose  will be supported on  the following defined Hilbert space $H_{k,\aa}$.

\beg{defn} Let $\aa,k \in (0,\infty)$.
\beg{enumerate} \item[(1)] The linear space
$H_{k,\aa}:=\D(T_\alpha^{k/2 })$ is a Hilbert space with inner product
$$
	\langle g, f \rangle_{H_{k,\aa}}:=\big\langle T^{k/2}_\alpha g, T^{k/2}_\alpha f\big\rangle_H\quad \text{for }f,g\in H_{k,\aa}.$$
	\item[(2)] Let  $G_{k,\alpha}$ be the centered  Gaussian measure  on $H_{k,\aa}$ with covariance operator $T_\alpha^{-d'}$ for some constant $d'>d$:
	\begin{equation}\label{eq:covG}
	\int_{H_{k,\aa}}\langle f,h\rangle_{H_{k,\aa}}\langle g,h\rangle_{H_{k,\aa}}\d G_{k,\alpha}(h)=\langle f,T_\alpha^{-d'}g\rangle_{H_{k,\aa}},\quad f,g\in H_{k,\aa}.
	\end{equation} This is well-defined  since $d'>d$ implies
	\begin{equation*}
		\sum_{n_1,\dots,n_d\in\N\cup\{0\}}\bigg(\ff{1}{2(n_1+\dots+n_d)+d+\alpha}\bigg)^{d'}<\infty,
	\end{equation*} so that  $T_\alpha^{-d'}$ is a  symmetric trace class operator on $H_{k,\aa}$. \end{enumerate}

	 \end{defn}

\begin{rem}\label{re:estimate}
	 Obviously,  $H_{k,\aa}\hookrightarrow W^{k,2}(\R^d)$ continuously with
	\beq\label{re:estimate1}
		\int_{\R^d}f(x)^2(|x|^{2k}+\aa)\d\lam(x)  \leq \|f\|_{H_{k,\aa}}^2, \end{equation}
	\beq\label{re:estimate2} \int_{\R^d}|\nabla f|^2\d \lam(x) \leq\|f\|_{H_{k,\aa}}^2. \end{equation}
  \end{rem}


\begin{prp}\label{prp:Vqh}
	Let a measurable function $\beta:\mac\to\R\cup\{+\infty\}$  and constants $k\in (\ff d 2,\infty)$, $\alpha_1\in[0,\infty)$ be given such that $(\beta1)$-$(\beta3)$ hold.
    For constants $\aa \in [1,\infty)$, $d'\in (d,\infty)$ with
	\beq\label{**}  (d+\aa)^{d'}>2\aa_1,\end{equation}
	let $G_{k,\alpha}$ be the Gaussian measure on $H_{k,\aa}$ with covariance given in \eqref{eq:covG} and $\LL_\M:=G_{k,\alpha}\circ\Psi_\M^{-1}$. Then,
	  \begin{equation*}\LL_\M(\{\beta\in\R\})=1\quad\text{and}\quad 0<\LL_\M(\e^{-{\beta}})<\infty.\end{equation*}
	 Moreover,   the bilinear form $\EE_\M^{\beta}$ in \eqref{eq:EEM}  is well-defined and closable in $L^2(\mac,\LL_\M^{\beta})$, and its closure  is a local quasi-regular  Dirichlet form.
\end{prp}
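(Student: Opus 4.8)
The plan is to split the statement into three parts and to reduce the last one to Proposition \ref{NP}(2). Throughout I identify $\mu=\Psi_\M(h)=h^2\lam$ with its square root $h\in H_{k,\aa}$, so that $h_\mu=h^2$, $h_\mu^{1/2}=|h|$ and $h_\mu^{\theta}=|h|^{2\theta}$, and I use $G_{k,\alpha}(H_{k,\aa})=1$. First I would check, for every $h\in H_{k,\aa}$, the integrability hypothesis entering $(\beta1)$. The weighted term is handled by \eqref{re:estimate1} (with $\aa\ge 1$, so $1+|\cdot|^{2k}\le|\cdot|^{2k}+\aa$), giving $\lam((1+|\cdot|^{2k})h^2)\le\|h\|_{H_{k,\aa}}^2$. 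For $\lam(|h|)$ I would use Cauchy--Schwarz against the weight $(1+|\cdot|^{2k})^{-1}$, whose integral $C_k^2:=\lam((1+|\cdot|^{2k})^{-1})$ is finite precisely because $2k>d$ (this is where $k>d/2$ enters), so that $\lam(|h|)\le C_k\|h\|_{H_{k,\aa}}$. For $\lam(|h|^{2\theta})$ I would use the Sobolev chain $H_{k,\aa}\hookrightarrow W^{1,2}(\R^d)\hookrightarrow L^{2^*}$ coming from \eqref{re:estimate2} together with $H_{k,\aa}\hookrightarrow L^2$, and note that $2<2\theta\le 2^*$ by $\theta\in(1,d/(d-2)^+]$. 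Thus the hypothesis of $(\beta1)$ holds for all $h\in H_{k,\aa}$, giving $\beta<\infty$ there; combined with $(\beta2)$, whose right-hand side is finite by the same estimates, this yields $\beta\in\R$ on $H_{k,\aa}$ and hence $\LL_\M(\{\beta\in\R\})=1$.

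For the two-sided bound, the lower estimate $\LL_\M(\e^{-\beta})>0$ is immediate, since $\e^{-\beta}>0$ on the full-measure set $H_{k,\aa}$. For the upper estimate I would insert the above bounds into $(\beta2)$ to get
$$-\beta(\Psi_\M(h))\le\aa_1\|h\|_{H_{k,\aa}}^2+\aa_2 C_k\|h\|_{H_{k,\aa}},\qquad h\in H_{k,\aa},$$
and then integrate $\e^{-\beta\circ\Psi_\M}$ against $G_{k,\alpha}$. The crux is the Gaussian (Fernique-type) integrability of $\exp(\aa_1\|\cdot\|_{H_{k,\aa}}^2)$: since the covariance operator $T_\alpha^{-d'}$ on $H_{k,\aa}$ has largest eigenvalue $(d+\alpha)^{-d'}$ --- the smallest eigenvalue of $T_\alpha$ being $d+\alpha$ by \eqref{eq:eigenvalues} --- the integrability threshold is $\aa_1<\tfrac12(d+\alpha)^{d'}$, which is exactly the assumption \eqref{**}. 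The linear term is absorbed by Young's inequality into a slightly larger quadratic exponent still below the threshold, so $\LL_\M(\e^{-\beta})<\infty$. I expect this matching of the Fernique threshold with \eqref{**} to be the main point of the integrability part.

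For closability and quasi-regularity I would apply Proposition \ref{NP}(2) with $H_G=H_{k,\aa}$ and $G=G_{k,\alpha}$; by the previous step $\e^{-\beta}\in L^1(\mac,\LL_\M)$, so it only remains to prove that $\e^{-\beta\circ\Psi_\M}$ is lower semicontinuous on $H_{k,\aa}$. Given $h_n\to h$ in $H_{k,\aa}$, I would verify the three $L^1(\lam)$-convergences required by $(\beta3)$. For the weighted term I write $h_n^2-h^2=(h_n-h)(h_n+h)$ and apply Cauchy--Schwarz against $1+|\cdot|^{2k}$, using \eqref{re:estimate1} and the boundedness of $\|h_n+h\|_{H_{k,\aa}}$. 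For $\||h_n|-|h|\|_{L^1(\lam)}$ I use $\big||h_n|-|h|\big|\le|h_n-h|$ and $\lam(|\cdot|)\le C_k\|\cdot\|_{H_{k,\aa}}$. For $\||h_n|^{2\theta}-|h|^{2\theta}\|_{L^1(\lam)}$ I use the elementary bound $|a^{2\theta}-b^{2\theta}|\le 2\theta(a^{2\theta-1}+b^{2\theta-1})|a-b|$, H\"older with exponents $\tfrac{2\theta}{2\theta-1}$ and $2\theta$, and the $L^{2\theta}(\lam)$-convergence obtained by interpolating the $L^2$- and $L^{2^*}$-convergences that follow from $h_n\to h$ in $H_{k,\aa}$. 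Then $(\beta3)$ gives $\e^{-\beta(\Psi_\M(h))}\le\liminf_n\e^{-\beta(\Psi_\M(h_n))}$, i.e.\ the desired lower semicontinuity, and Proposition \ref{NP}(2) furnishes the closability of $(\EE_\M^\beta,C_b^{E,1}(\M))$ in $L^2(\mac,\LL_\M^\beta)$ together with the local quasi-regular Dirichlet form. The main obstacle is this $L^{2\theta}$-convergence for the third term, which is exactly why the restriction $\theta\le d/(d-2)^+$ (equivalently $2\theta\le 2^*$) is imposed.
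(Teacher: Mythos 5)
Your proposal is correct and follows essentially the same route as the paper's proof: the same three embedding estimates (weighted $L^1$ control via \eqref{re:estimate1} with $\aa\ge 1$, $\lam(|h|)\le C_k\|h\|_{H_{k,\aa}}$ from $k>d/2$, and $L^{2\theta}$ via Sobolev interpolation), the same Fernique-threshold computation identifying \eqref{**} as exactly the integrability condition for $\e^{-\beta}$, and the same three $L^1(\lam)$-convergences feeding $(\beta3)$ to obtain lower semicontinuity of $\e^{-\beta\circ\Psi_\M}$ on $H_{k,\aa}$ and hence closability. The only caveat is that the preamble of Proposition \ref{NP} formally assumes $\beta\in\D(\EE_\M)$, which is not available under $(\beta1)$--$(\beta3)$ alone, so you should invoke the closability argument inside the proof of Proposition \ref{NP}(2) (Hamza condition, the closability result of \cite{AR}, then Theorem \ref{TA}(1) with Remark \ref{rem:predom}) rather than the proposition as stated --- which is precisely what the paper does, and that argument uses only the facts you verified.
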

\begin{proof}	
	 Noting that  $k> \ff  {d}{2}$ implies
	$$c_0:= \lll\big((1+|\cdot|^2)^{- k }\big)^{\ff 1 2}<\infty,$$
	   \eqref{**} and  H\"older's inequality  imply
	\beq \label{eq:H0L1}
		 \lll(|f|)  \leq \lll\big(f^2(1+|\cdot|^{2})^k \big)^{\ff 1 2} \lll\big((1+|\cdot|^2)^{- k }\big)^{\ff 1 2}
		  \leq c_0 \|f\|_{H_{k,\aa}},\ \ f\in H_{k,\aa}.
	 \end{equation}
	Next, by \eqref{re:estimate1},  \eqref{re:estimate2}   and  the Sobolev embedding theorem (see for instance \cite[Section 1.1.4]{Maz}), for $\theta\in (1,\ff d{(d-2)^+}]\cap (1,\infty)$, there exist    constants $\vv\in (0,1]$ and  $c>0$   such that
	 \beq\label{eq:SobEmb}\beg{split}
		&\|f\|_{L^{2\theta}(\lll)} \leq c \|\nabla f\|_{L^2(\lll)}^\vv \|f\|_{L^2(\lll)}^{1-\vv} \leq c \|f\|_{H_{k,\aa}}, \ \ f\in H_{k,\aa}.
	\end{split} \end{equation}
Combining this with  \eqref{re:estimate1}, \eqref{eq:H0L1} and $(\bb 1)$, we conclude
	\begin{equation*}
	\LL_\M(\{\beta\in\R\})\geq \LL_\M(\{\mu=\Psi_\M(f):f\in H_{k,\aa}\})=G_{k,\alpha}(H_{k,\aa}) =1.
	\end{equation*}
	In particular, $\LL_\M(\{\e^{-\beta}>0\})=1$.

	Next, we prepare to show $\e^{-\bb}\in L^1(\mac,\LL_\M)$ by recalling that $\sigma_{\text{max}}:= (d+\alpha)^{-d'}$ is the largest eigenvalue of the covariance operator
 $T_\alpha^{-d'}$ of $G_{k,\aa}$, since the spectrum of $T_\alpha$ is given by \eqref{eq:eigenvalues}. As a consequence of a general result on Gaussian measures on Hilbert spaces (see e.g.~\cite[Prop.~1.2.8]{DZ}), the function $\exp(\gamma\|\cdot\|_{H_{k,\aa}}^2)$ is $G_{k,\aa}$-integrable for any $\gamma<\sigma_{\text{max}}/2$. Then, it is easy
to conclude that also $\exp(\gamma\|\cdot\|_{H_{k,\aa}}^2+r\|\cdot\|_{H_{k,\aa}})$ is $G_{k,\aa}$-integrable for any $\gamma<\sigma_{\text{max}}/2$, $r\in\R$.
By \eqref{re:estimate1} and $(\bb 2)$,   we find a constant $c_1>0$ such that
	 \beg{align*}
		&\int_{\mac} \e^{\delta \mu(\R^d)-\bb(\mu)} \d\LL_\M(\mu) = \int_H \e^{\delta\|f\|_H^2-(\beta\circ\Psi_\M)(f) } \d G_{k,\aa}( f) \\
		&\le \int_H \e^{(\aa^{-1}\delta+\aa_1)\|f\|_{H_{k,\aa}}^2+   c_1\|f\|_{H_{k,\aa}} } \d G_{k,\aa}(f),\ \ \delta\ge 0.\end{align*}
The integral  is finite if we choose $\delta$ small enough such that		
\beq\label{CDX} \ff\delta\aa +\aa_1 <\ff {1} {2 \sigma_{\text{max}}}= \ff 1 2(d+\aa)^{d'}.\end{equation}
Such $\delta>0$ exists due to \eqref{**}.

So,  \begin{equation*}
\LL_0^\beta:=\ff {\e^{-\beta\circ\Psi_\M} G_{k,\alpha}}{G_{k,\alpha}(\e^{-\beta\circ\Psi_\M})}
\end{equation*} is a well defined   probability measure on $H$, with support on $H_{k,\aa}$,   which satisfies
 $$\LL_\M^\beta=\LL_0^\beta\circ\Psi_\M^{-1}$$  and
\beq\label{eq:integrability} \int_H \e^{\delta \|\cdot \|_H^2} \d \LL_0^\bb=  \int_{\mac}\e^{\delta \mu(\R^d)} \d\LL_\M^\bb (\mu)<\infty\ \text{for \ some \ } \delta>0.\end{equation}

 We now consider the bilinear form
\begin{equation*}
\EE^0(u,v):=\ff 1 4\int_{H}\langle \nabla u(f),\nabla v(f)\rangle_{H}\d \LL^\beta_0(f),
\ \ u,v\in C_b^1(H),
\end{equation*}
in $L^2(H_{k,\aa},G_{k,a})$. By \eqref{re:estimate1},  \eqref{eq:H0L1} and \eqref{eq:SobEmb},
if $f_n\to f$ in $H_{k,\aa}$, then  $\mu_n:=f_n^2\lll$ and $\mu:= f^2\lll$ satisfy
\beg{align*}&\lll\big(|h_{\mu_n}-h_\mu|(1+|\cdot|^{2k})\big)=\lll\big(|f_n^2-f^2|(1+|\cdot|^{2k})\big)\le \|f_n-f\|_{H_{k,\aa}} \big(\|f_n\|_{H_{k,\aa}}+\|f\|_{H_{k,\aa}}\big)\to 0,\\
& \|h_{\mu_n}^{\ff 1 2}- h_{\mu}^{\ff 1 2}\|_{L^1(\lll)} =\lll(|f_n-f|)\to 0,\\
&\|h_{\mu_n}^{\theta}- h_{\mu}^{\theta}\|_{L^1(\lll)} = \lll(|f_n^{2\theta}- f^{2\theta} |)
  \le 2\theta \lll\big(|f_n-f|(|f_n|^{2\theta-1}+|f|^{2\theta-1})\big)]\\
  &\quad \le 2\theta \|f_n-f\|_{L^{2\theta}(\lll)} \big(\|f_n\|_{L^{2\theta}(\lll)}^{2\theta-1}+ \|f\|_{L^{2\theta}(\lll)}^{2\theta-1}\big)\to 0.\end{align*}
Therefore,  $(\beta3)$ implies
$$  (\beta\circ\Psi_\M)(f_n)= \bb(\mu_n)\to \bb(\mu)=  (\beta\circ\Psi_\M)(f_n),$$  so that
$\beta\circ\Psi_\M$ is continuous   on $H_{k,\aa}$.
Then, as explained in the proof of Proposition \ref{NP}(2),  $(\EE^0,C_b^1(H))$ is closable in $L^2(H_{k,\aa},G_{k,\alpha})$, and  we can apply Theorem \ref{TA}(1) together with Remark \ref{rem:predom} to conclude the proof.
\end{proof}

By  Propositions \ref{NP} and \ref{prp:Vqh}, to show that the diffusion process associated with $(\EE_\M^\bb, \D(\EE_\M^\bb))$ is a martingale solution to \eqref{eq:gradFl}, it remains to verify
$\bb\in \D(\EE_\M). $    This is confirmed in the next result for $\bb$ of type
 \begin{equation}\label{eq:betaq}
		\beta_{q,\phi}(\mu):=\beg{cases} \int_{\R^d}\big(q\circ h_\mu+\phi h_\mu\big)\d\lam,\ &\text{if}\ q\circ h_\mu+\phi h_\mu \in L^1(\lll),\\
		\infty,  &\text{otherwise,} \end{cases}
	\end{equation}
for $ \mu\in \mac,$ where   $q\in C^1((0,\infty))\cap C_b([0,\infty))$, and $\phi\in C(\R^d)$   satisfies
\beq\label{PH} |\phi|\le \min\big\{\aa_1 (1+|\cdot|^{2k}),\ K (1+|\cdot|^{k})\big\}\end{equation}
for some constants  $K,\aa_1>0$. Here, $K$ can be chosen freely, $\aa_1$ will be used to verify condition $(\bb_2)$ and $k>0$ is the same constant as in $H_{k,\aa}$ with $\alpha$ depending on $\alpha_1$ as in \eqref{**}.

\begin{prp}\label{thm:beta} Let $\beta_{q,\phi}$ be   in \eqref{eq:betaq} for some $q\in C^1((0,\infty))\cap C_b([0,\infty))$ and   $\phi\in C(\R^d)$ satisfying \eqref{PH} for some constants
$K,k,\aa_1\in (0,\infty).$
Let  $\theta\in \big(1,\ff d{(d-2)^+}]\cap (1,\infty),$   and let  $ \aa \in (0,\infty)$   such that    \eqref{**} holds.
	 \begin{enumerate}
		\item[$(1)$] $\bb=\bb_{q,\phi}$ satisfies $(\bb 1)$-$(\bb 3)$ for the given constants $k,\theta,\aa_1$ together with some other constant $\aa_2\in(0,\infty)$, if  there exists $c \in (0,\infty)$ such that
	\beq\label{eq:qphi}\beg{split}
		&-c  s^{\ff 1 2}  \leq q(s)\leq c\big(1  +s^{\theta} \big),\quad s\in[0,\infty),\ \\
		&\sup_{s>0 } \ff {|q'(s)|}{ s^{-\ff 1 2}+ s^{\theta-1} }<\infty. \end{split}\end{equation}
	 \item[$(2)$] If, in addition to \eqref{**} and \eqref{eq:qphi},
	\begin{equation}\label{eq:qphi2}
		  \sup_{s>0 } \ff {|q'(s^2)|}{ s^{-\ff 1 2}+ s^{\theta-1} }<\infty,
		 \end{equation}
	  then $\beta=\bb_{q,\phi}\in\D(\EE_\M)$ and
	\begin{equation}\label{eq:Dbeta}
		 \De\beta (\mu) =  q'\circ h_\mu+\phi,\quad \,\LL_\M\text{-a.e.~}\mu\in\mac.
	\end{equation}
	\end{enumerate}
\end{prp}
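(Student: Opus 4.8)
The plan is to transfer everything through the map $\Psi_\M$ of \eqref{MP1}. Although $\beta=\beta_{q,\phi}$ is not extrinsically differentiable in the classical sense — adding a Dirac mass leaves the Lebesgue density $h_\mu$ unchanged, so the na\"ive limit defining the extrinsic derivative of $\mu\mapsto\int q\circ h_\mu\,\d\lll$ vanishes — its \emph{weak} extrinsic derivative is captured by the genuine Fr\'echet derivative of $F:=\beta\circ\Psi_\M$ on $H$, read off through the chain rule \eqref{*P0}. Accordingly, Part~(1) is a matter of verifying the three structural conditions from the explicit formula \eqref{eq:betaq}, and Part~(2) amounts to showing $F\in\D(\EE_\M^{0,id})$ with $\nn F(f)=2f\,(q'(f^2)+\phi)$ and then transferring back via Theorem~\ref{TA}(1) and \eqref{*P0} to obtain \eqref{eq:Dbeta}.

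For Part~(1), I would first record that the derivative bound in \eqref{eq:qphi} makes $q'$ integrable near $0$ (since $s^{-1/2}$ is), so $q$ extends continuously to the origin; normalizing $q(0)=0$ — necessary for $(\beta1)$ to be non-vacuous, as $h_\mu$ decays at spatial infinity — and integrating gives $|q(s)|\le C(s^{1/2}+s^\theta)$. Then $(\beta1)$ follows because $|q\circ h_\mu|\le C(h_\mu^{1/2}+h_\mu^\theta)$ and $|\phi h_\mu|\le\aa_1(1+|\cdot|^{2k})h_\mu$ by \eqref{PH}, both integrable under the hypothesis of $(\beta1)$; and $(\beta2)$ holds with $\aa_2=c$ from the lower bound $-q(s)\le c\,s^{1/2}$ together with \eqref{PH}. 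The only genuine work is $(\beta3)$: from the assumed $L^1(\lll)$-convergence of $h_{\mu_n}^{1/2}$ I would extract a subsequence converging $\lll$-a.e., whence $q\circ h_{\mu_n}\to q\circ h_\mu$ a.e.\ by continuity, and then apply a generalized dominated convergence (Pratt-type) argument with the dominating functions $C(h_{\mu_n}^{1/2}+h_{\mu_n}^\theta)$, which converge in $L^1(\lll)$ by hypothesis; the $\phi$-term converges directly since $|\phi|\le\aa_1(1+|\cdot|^{2k})$. A subsequence-of-subsequence argument then yields $\beta(\mu_n)\to\beta(\mu)$, i.e.\ continuity of $\beta$, which is stronger than the lower semicontinuity of $\e^{-\beta}$ demanded by $(\beta3)$.

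For Part~(2), I would differentiate $F(f)=\int_{\R^d}\big(q(f^2)+\phi f^2\big)\,\d\lll$ under the integral sign. The difference quotients along a direction $g$ are dominated, uniformly for small $\varepsilon$, using \eqref{eq:qphi2} in the form $|q'((f+\varepsilon g)^2)(f+\varepsilon g)|\le C(|f+\varepsilon g|^{1/2}+|f+\varepsilon g|^\theta)$, giving $\nn F(f)=2f\,(q'(f^2)+\phi)\in H$. The crucial estimate is finiteness of the energy: by \eqref{eq:qphi2}, $q'(f^2)^2 f^2\le 2C^2(|f|+|f|^{2\theta})$, while \eqref{PH} yields $\phi^2 f^2\le 2K^2(1+|\cdot|^{2k})f^2$; integrating over $\R^d$ via \eqref{eq:H0L1}, \eqref{eq:SobEmb} and \eqref{re:estimate1} bounds $\|\nn F(f)\|_H^2$ by a polynomial in $\|f\|_{H_{k,\aa}}$, which is $G_{k,\aa}$-integrable since Gaussian measures have all polynomial moments; the same bounds give $\beta\in L^2(\mac,\LL_\M)$. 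Having $F\in L^2(H,G_{k,\aa})$ Fr\'echet differentiable with $\nn F\in L^2(H,G_{k,\aa};H)$, I would conclude $F\in\D(\EE_\M^{0,id})$ by the standard characterization of the Gaussian Sobolev space, approximating $F$ by even cylinder functions — i.e.\ by images $u_n\circ\Psi_\M$ of $u_n\in C_{b,2}^{E,1}(\M)$ — with $L^2$-convergence of both functions and gradients. Theorem~\ref{TA}(1) and \eqref{*P0} then transfer this to $\beta\in\D(\EE_\M)$ with $\De\beta(\mu)=q'(h_\mu)+\phi$.

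The main obstacle is the singularity of $q'$ at the origin. Condition \eqref{eq:qphi2} — phrased in terms of $q'(s^2)$ rather than $q'(s)$ as in \eqref{eq:qphi} — is exactly what Part~(2) needs: it converts the factor $|f|^{-1}$ arising from $q'(f^2)$ into $|f|^{1/2}$ after multiplication by the $|f|$ produced by the $L^2(\mu)$-norm (equivalently by $\nn\Psi_\M$), replacing a non-integrable constant $1$ over $\R^d$ by the integrable $|f|$. A secondary technical point is to place $F$ \emph{genuinely} in the closed form domain $\D(\EE_\M^{0,id})$, rather than merely exhibiting a formal gradient, and to do so through approximants respecting the evenness $F(f)=F(|f|)$ inherited from $\Psi_\M$, so that membership transfers to the extrinsic-derivative domain $\D(\EE_\M)$ and not only to a possibly larger Gaussian–Sobolev class.
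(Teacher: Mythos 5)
Your Part (1) is correct and essentially the paper's argument: $(\beta1)$, $(\beta2)$ follow from the pointwise bounds, and $(\beta3)$ from continuity of $\beta$ along the stated convergence. The paper gets $(\beta3)$ from the quantitative estimate $|q(r)-q(t)|\le c_1\big[2(t^{1/2}-r^{1/2})+\theta^{-1}(t^\theta-r^\theta)\big]$, $0\le r\le t$, while you use a Pratt-type dominated convergence argument; both work, and your explicit observation that one must have $q(0)=0$ for $(\beta1)$ to hold (the paper leaves this implicit in \eqref{eq:qphi}) is a fair point.

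In Part (2), however, there is a genuine gap at the decisive step. You correctly identify the candidate gradient $\nn F(f)=2f\big(q'(f^2)+\phi\big)$ of $F=\beta\circ\Psi_\M$ and you establish the right integrability (your estimates parallel \eqref{BBN1}--\eqref{BBN2} in the paper), but you then conclude $\beta\in\D(\EE_\M)$ by invoking ``the standard characterization of the Gaussian Sobolev space, approximating $F$ by even cylinder functions, i.e.\ by images $u_n\circ\Psi_\M$ of $u_n\in C_{b,2}^{E,1}(\M)$.'' No such characterization is available in the form you need. The standard Gaussian arguments (finite-dimensional projections $E[F\,|\,\F_n]$, or mollification along Cameron--Martin directions) produce approximants that are \emph{arbitrary} cylinder functions; they do not respect the property of being a function of $f^2\lll$, so at best they give $F\in\D(\EE_\M^{0,id})$. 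That is strictly weaker than what \eqref{eq:Dbeta} requires: by \eqref{eq:EEmageDomain} the transfer goes only one way, $\D(\EE_\M)\subseteq\{u:\,u\circ\Psi_\M\in\D(\EE_\M^{0,id})\}$, since $\D(\EE_\M)$ is \emph{by construction} the $\EE_1$-closure of the image class $\{u\circ\Psi_\M:\,u\in C_{b,2}^{E,1}(\M)\}$. Note also that the relevant invariance of $F$ is not evenness but invariance under \emph{every} measurable sign flip $f\mapsto\epsilon f$, $\epsilon:\R^d\to\{\pm 1\}$; such flips preserve neither $H_{k,\aa}$ nor any regularity structure of $G_{k,\aa}$, so no averaging or projection argument can upgrade a generic Sobolev approximation to one inside the image class. (A secondary imprecision: $\nn F(f)$ contains the term $|f|^{1/2}$-type contributions and $\phi f$, which need not lie in $H$ for general $f\in H$, so $F$ is Fr\'echet differentiable only at points of $H_{k,\aa}$, not ``on $H$''.)

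Closing precisely this gap is the content -- and the bulk -- of the paper's proof, which never leaves the extrinsic class: (a) the ``cube'' functionals $\beta_{abc}(\mu)=q\big(\mu([a,b))/\lam([a,b))\big)\lam([a,b))+c\,\mu([a,b))$ are shown to lie in $\D(\EE_\M)$ by smoothing $\eins_{[a,b)}$ with bump functions $\tau_\vv$, staying in $C_b^{E,1}(\M)$; (b) for $q\in C_b^1$, $\phi\in C_b$ on a bounded cube, the piecewise-averaging operators $S_n$ of \eqref{eq:LMV} reduce the functional $\int_{[a,b)}q\circ h_\mu\,\d\lam+\int_{[a,b)}\phi\,\d\mu$ to finite sums of type (a); (c) the general case follows by the truncations $q_n(s)=q\big(\tfrac{1+ns}{n+s}\big)$, $\phi_n=(-n)\lor(n\land\phi)$ and exhaustion by cubes $[-n,n)^d$, with dominated convergence in the $\EE_1$-graph norm (using exactly the moment estimates you derived) and closedness of $(\De,\D(\EE_\M))$. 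Your proposal omits this construction entirely, so as written it proves only membership of $F$ in the (possibly strictly larger) Gaussian Sobolev class, not $\beta\in\D(\EE_\M)$.
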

\begin{proof} By \eqref{re:estimate1}, \eqref{PH} and \eqref{eq:qphi},
	  Properties $(\beta1)$ and $(\beta2)$ are obvious. To verify $(\beta3)$, we find some constant $c_1\in(0,\infty)$ such that
	  	\begin{align*}
	 |q(r)-q(t)|\leq C\int_{r}^{t}\big(s^{-\ff 1 2}+ s^{\theta-1}\big)\d s\leq c_1  \Big[2 (t^{\ff 1 2}-  r^{\ff 1 2})+\theta^{-1} (t^{\theta}-r^{\theta} )\Big],\ \ 0\le r\le t<\infty.\end{align*}
	  Hence, for any $\mu,\nu\in\mac$ with $h_\mu,h_\nu\in L^{\theta}(\lam)$ and
	 $   h_\mu^{\ff 1 2},  h_\nu^{\ff 1 2} \in L^{1}(\lam)$,
	there exists a constant $c_2 \in(0,\infty)$ such that
	\begin{equation*}
		|\beta(\mu)-\beta(\nu)|\leq c_2\big(\big\|  (h_\mu-  h_\nu)(1+|\,\cdot\|^{2k})\big\|_{L^{1}(\lam)}+\big\|  h_\mu^{\ff 12}-  h_\nu^{\ff 12}\big\|_{L^{1}(\lam)}+\|h_\mu^\theta-h_\nu^\theta\|_{L^{1}(\lam)} \big),
	\end{equation*}
	so that  $(\beta3)$ holds.  Therefore, assertion (1) is proved.
	
	Below,  we split the  proof of assertion (2) into three steps by approximations.
	
	(a) Let $a< b\in\R^d$ (i.e.~$a_i<b_i$) and let  $c\in \R$. Consider
	\begin{equation*}
		\beta_{abc}(\mu):=q\Big(\ff{\mu([a,b))}{\lam([a,b))}\Big)\lam([a,b))+c \mu([a,b)),\quad \mu\in\mac.
	\end{equation*}
	Then $\bb_{abc}$ is extrinsically differentiable with
	$$\De \bb_{abc}(\mu)= 1_{[a,b)}\Big[ q'\Big(\ff{\mu([a,b))}{\lam([a,b))}\Big)+ c\Big].$$
	We prove   $\bb\in \D(\EE_\M)$,   we approximate $\bb_{abc}$ by functions in $C_b^{E,1}(\M)$ as follows.
	
For small $\vv>0$ we choose ${\{\tau_\varepsilon\}}_{\varepsilon> 0}\subset C_c(\R^d)$ such that $\eins_{[a,b]}\leq \tau_\varepsilon\leq {\eins_{[a-\varepsilon,b+\varepsilon]}}$, where $a-\varepsilon:={(a_i-\varepsilon)}_{i=1}^d$ and $b-\varepsilon:={(b_i-\varepsilon)}_{i=1}^d$. Obviously,
\begin{equation*}
	\beta_\varepsilon(\mu):=q\Big(\ff{\mu(\tau_\varepsilon)}{\lam([a,b))}\Big)\lam([a,b))+\ff{c \mu( \tau_\varepsilon)}{1+\vv  \mu( \tau_\varepsilon)},\quad \mu\in\mac,
\end{equation*}
is a member of $\Ce_{b}(\M)\subset \D(\EE_\M)$ with
$$D^E \bb_\vv(\mu)= q'\Big(\ff{\mu(\tau_\varepsilon)}{\lam([a,b))}\Big)\tau_\vv + \ff{c\tau_\vv}{(1+\vv   \mu(\phi \tau_\varepsilon))^2}.$$
Then
\begin{equation}\label{eq:tauE}
	\beta_\varepsilon(\mu)\overset{\varepsilon\downarrow 0}{\rightarrow}\beta(\mu),\quad
	\Big\|\eins_{[a,b)}(x)\Big[q'\Big(\ff{\mu([a,b))}{\lam([a,b))}\Big)+c\Big]-\De\beta_\varepsilon(\mu,x)\Big\|_{L^2(\mu)}\overset{\varepsilon\downarrow 0}{\rightarrow}\beta(\mu),\quad\mu\in\mac,
\end{equation} and by \eqref{eq:H0L1} and \eqref{eq:SobEmb}, we find a constant  $c_1 >0$ such that
\beq\label{0*} \beg{split} &|\bb_\vv(\mu)|\le c_1\big(\big\|\ss{h_\mu}\big\|_{H_{k,\aa}}^{2\theta} +1\big),\\
&\|D^E\bb_\vv(\mu)\|_{L^2(\mu)} \le \big(\|q'\|_\infty+\|\phi\|_\infty\big)\ss{\mu(\R^d)}.\end{split} \end{equation}
By \eqref{re:estimate1} and  the fact that the Gaussian measure $G_{k,\aa}$ has any finite moment in $H_{k,\aa}$, we conclude
\beq\label{*0} \int_{\M^{ac}} \mu(\R^d)^n \LL_\M(\d\mu)= G_{k,\aa}(\|\cdot\|_H^{2n})  \le \aa^{-2n} G_{k,\aa}(\|\cdot\|_{H_{k,\aa}}^{2n})  <\infty,\ \ n\in \mathbb N.\end{equation}
Thus, we may apply the dominated convergence theorem to conclude that the asymptotic for the functions in \eqref{eq:tauE} holds also in $L^2(\mac,\LL_\M).$ So, the claimed assertion is confirmed.

(b) Let $a< b\in\R^d$, $q\in C_b^1([0,\infty))$ and $\phi\in C_b(\R^d).$  Then we claim that
	\begin{equation*}
		\beta(\mu):=\int_{[a,b)}q\circ h_\mu\d\lam+\int_{[a,b)}\phi\d\mu,\quad \mu\in\mac,
	\end{equation*}
	is a member of $\D(\EE_\M)$ and
	\begin{equation*}
		 \De\beta (\mu)=1_{[a,b)}\big[q'(h_\mu)+\phi \big].	\end{equation*}

To prove this assertion, for any $n\in\mathbb N$ we denote by ${\{Q^n_j\}}_{j=1,\dots,n^d}$ the partition of $[a,b)$ into the collection of all subcubes of the type $l_1\times\dots \times l_d$
with
$$l_i=a_i+\tfrac{1}{n}[(k_i-1)(b_i-a_i),k_i(b_i-a_i)),\ \ 1\le i \le d,\ 1\le k_i\le  n.$$
Then for any $\eta\in L^1([a,b),\lam)$, it holds
\begin{equation}\label{eq:LMV}
	S_n\eta:=\sum_{j=1}^{n^d}\ff{\lam(\eins_{Q^n_j}\eta)}{\lam(Q^n_j)}\eins_{Q^n_j}\overset{n\to\infty}{\rightarrow}\eta\quad\text{in }L^1([a,b),\lam).
\end{equation}
Define for each $n\ge 1$
 \beq\label{BN1}
	\beta_n(\mu):=\int_{[a,b)}q\circ (S_n h_\mu)\d\lam+\int_{[a,b)} S_n\phi\d\mu,\quad \mu\in\mac.
\end{equation}
By step  (a) and the linearity of $\De$,    we conclude that $\beta_n\in\D(\EE_\M)$ with
\begin{equation*}
	 \De\beta_n (\mu)=1_{[a,b)}\Big[q'(S_n h_\mu(x))+ S_n\phi(x)\big],\ \ \LL_\M\text{-a.e.}\ \mu\in \M^{ac}.
\end{equation*}
  Let $\beta$ be as in the claim of (b). By \eqref{eq:LMV}, $\|q'\|_\infty<\infty$ and $ \|\eins_{[a,b)}S_n\phi\|_{\infty}\le \|\phi\|_\infty<\infty$,
we may apply the dominated convergence theorem to get
 \begin{equation}\label{eq:Sn}
	\lim_{n\to\infty}\bb_n(\mu)= \bb(\mu),\ \ \lim_{n\to\infty} \Big\|\eins_{[a,b)}\big[q'(S_{n} h_\mu)+ S_{n}\phi\big]-\eins_{[a,b)}\big[q'(h_\mu)+ \phi\big]\Big\|_{L^2(\mu)}=0,\ \ \mu\in \M^{ac}.\end{equation}
	Then by the same reason used in the end of step (a), we may further apply the dominated convergence theorem to conclude that
	\eqref{eq:Sn} holds also in   $L^2(\mac,\LL_\M)$. Then the desired assertion holds.

(c) Let $q$, $\phi$ and $\beta$ be as in the assumptions of this proposition.
For each $n\in\N$, let
$$q_n(s):=q\Big( \ff{1+ns}{n+s}\Big),\ \ \ \ \phi_n:= (-n)\lor \big[ n\land \phi \big].$$
Then $q_n\in C_b^1([0,\infty)), \phi_n\in C_b(\R^d)$, $q_n\to q$ and $\phi_n\to\phi$  as $n\to\infty$.
  By step (b), the potential
 \begin{equation}\label{BBN0}
 	\beta_n(\mu):=\int_{[-n,n)^d}\big(q_n\circ h_\mu + \phi_n h_\mu\big)\d\lll,\quad \mu\in\mac
 \end{equation}
 is a member of $\D(\EE_\M)$ and
\beq\label{DBN} \De \bb_n(\mu)= 1_{[-n,n)^d} \big[q_n'(h_\mu) +\phi_n\big].\end{equation}
By $|\phi_n|\le K(1+|\cdot|^{k})$ due to \eqref{PH},
 and  combining \eqref{BBN0}  with the first condition in  \eqref{eq:qphi}, we find a constant $c_0>0$ such that
\beg{align*} &|\bb(\mu)|\lor \sup_{n\ge 1}  |\bb_n(\mu)|\le
c_0 \lll\big(h_\mu^{\ff 1 2} + h_\mu^\theta + (1+|\cdot|^{k})h_\mu\big).\end{align*}
Thus, by
\eqref{re:estimate1},  \eqref{eq:H0L1} and \eqref{eq:SobEmb}, we find a constant $c_1>0$ such that
\beq\label{BBN1} |\bb(\mu)|\lor \sup_{n\ge 1} |\bb_n(\mu)|\le c_1\big(\big\|\ss{h_\mu}\big\|_{H_{k,\aa}}^{2\theta}+1\big).\end{equation}
 Moreover,  noting that  \eqref{eq:qphi2} implies
 $$\sup_{n\ge 1} |q_n(s)|\le c_2 (s^{-\ff 1 4}+s^{\ff{\theta-1}2}),\ \ s>0$$
 for some constant $c_2>0$,
 by \eqref{DBN} and using  this estimate in place of $\|q'\|_\infty<\infty$ which is not available in the present situation,
 the same argument enables us to   find  constants   $ c_3,c_4 >0$ such that
\beq\label{BBN2}\beg{split}&\|D^E\bb_n(\mu)\|_{L^2(\mu)}^2\lor \big\|q'(h_\mu)+ \phi\big\|_{L^2(\mu)}^2 \\
 & \le c_3 \big(\| h_\mu^{\ff 12}\|_{L^1(\lll)}^2+  \| h_\mu^{\theta}\|_{L^1(\lll)}^2+ \|(1+|\cdot|^{2k})h_\mu\|_{L^1(\lll)}\big)
 \le c_4 \big(1 +   \big\|\ss{h_\mu}\big\|_{H_{k,\aa}}^{4\theta} \big).\end{split} \end{equation}
Since
$$\int_{\mac}  \big\|\ss{\rr_\mu}\big\|_{H_{k,\aa}}^l\LL_\M(\d\mu) = \int_{H_{k,\aa}} \big\|f\big\|_{H_{k,\aa}}^lG_{k,\aa}(\d f)<\infty,\ \  l\in (1,\infty),$$   by combining \eqref{BBN0}-\eqref{BBN2}  with  $q_n(s)\to q(s)$ and $q_n'(s)\to q'(s)$, we may apply the dominated convergence theorem twice for $(\LL_\M$-a.e) $ \mu $    and  $\LL_\M$ respectively,  to conclude that
$$\lim_{n\to\infty} \Big(|\bb_n(\mu)-\bb(\mu)|^2+ 	\Big\|\De\bb_n(\mu)-\big[q'(h_\mu)+ \phi\big]\Big\|_{L^2(\mu)}^2\Big)=0,\ \ \LL_\M\text{-a.e.},$$
and furthermore,
  \begin{equation*}
 \int_{\mac} \Big(|\bb_n(\mu)-\bb(\mu)|^2+ 	\Big\|\De\bb_n(\mu)-\big[q'(h_\mu)+ \phi\big]\Big\|_{L^2(\mu)}^2\Big)\LL_\M(\d\mu)=0.
 \end{equation*}
 Then $\bb\in \D(\EE_\M)$ with
 $$\De\bb (\mu)=\big[q'(h_\mu)+ \phi\big],$$ so that
  \eqref{eq:Dbeta}  holds by definition.

 \end{proof}

By combing Propositions \ref{prp:Vqh} and \ref{thm:beta}, we have the following result.

\begin{cor}\label{cor:C2}  Let   $\theta\in \big(1,\ff d{(d-2)^+}\big]\cap (1,\infty)$ and $c,\aa_1, \aa_2\in (0,\infty)$ be   constants, and let $k,\aa \in (0,\infty) $ satisfy
\beq\label{**'} k>\ff d 2,\  \ \  \ (d+\aa)^{d'}>2\aa_1.\end{equation}
Let $\beta_{q,\phi}$ be defined in \eqref{eq:betaq} for some $q\in C^1((0,\infty))\cap C_b([0,\infty))$ and   $\phi \in C(\R^d)$ satisfying \eqref{PH} and
\beq\label{BBN} \beg{split} &-\aa_2 s^{\ff 1 2} \leq q(s)\leq c\big(s^{\ff 1 2} +s^{\theta} \big),\quad s\in[0,\infty),\\
&  \sup_{s>0 } \ff {|q'(s^2)|}{ s^{-\ff 1 2}+ s^{\theta-1} }<\infty.\end{split} \end{equation}
Let $\LL_\M=G_{k,\aa}\circ\Psi_\M^{-1}$ for $\Psi_\M(f):=f^2\lll$ and $\LL_\M^\beta:=\ff{\e^{-\beta}\LL_\M}{\LL_\M(\e^{-\beta})}.$
Then
	the diffusion process associated with the closure (see Proposition \ref{prp:Vqh} \& Corollary \ref{cor:C1}) of
	\begin{equation*}
		\EE_\M^\beta(u,v):=\int_{\mac}\langle \De u(\mu),\De v(\mu)\rangle_{L^2(\mu)}\d \LL^\beta_\M(\mu),\quad u,v\in \Ce_{b}(\M),
	\end{equation*}
	is a solution to the following SDE on $H=L^2(\R^d,\d x)$
	\begin{equation*}
		\d\mu_t= - \big[q'( h_{\mu_t})+\phi\big]\d t+\d R_t^\M,\quad t\ge 0,
	\end{equation*}
	in the  sense of Definition \ref{defn:martingaleSol}, where  $q'( h_{\mu_t})+\phi$ is well-defined as an element in $L^2(\mu)$ for $ \LL_\M$-a.e.~$\mu\in\mac$.
\end{cor}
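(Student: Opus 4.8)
The plan is to assemble the conclusion from the three preceding results: Proposition~\ref{thm:beta} (which identifies $\beta_{q,\phi}$ as a member of $\D(\EE_\M)$, verifies the structural hypotheses $(\beta1)$--$(\beta3)$, and computes its extrinsic derivative), Proposition~\ref{prp:Vqh} (which upgrades these hypotheses to the integrability $\e^{-\beta}\in L^1(\mac,\LL_\M)$ together with the closability and local quasi-regularity of $\EE_\M^\beta$), and Proposition~\ref{NP}(1) (which turns a local quasi-regular $\EE_\M^\beta$ into a martingale solution of \eqref{eq:gradFl}). The only genuine work is to check that the single pair of hypotheses \eqref{PH} and \eqref{BBN} in the corollary feeds correctly into the differently phrased hypotheses of these propositions.

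First I would reconcile the growth conditions. The lower bound in \eqref{BBN} is literally the lower bound in \eqref{eq:qphi} (with $c=\aa_2$), and the upper bound $q(s)\le c(s^{\ff12}+s^\theta)$ implies $q(s)\le c'(1+s^\theta)$, since $s^{\ff12}\le 1+s^\theta$ for all $s\ge 0$ when $\theta>1$ (compare $s\in[0,1]$ and $s>1$ separately); thus the $q$-bounds of \eqref{eq:qphi} hold. For the derivative bounds I would observe that the second line of \eqref{BBN} is exactly \eqref{eq:qphi2}, and that substituting $t=s^2$ there yields $|q'(t)|\le C(t^{-1/4}+t^{(\theta-1)/2})$, which is dominated by $C'(t^{-1/2}+t^{\theta-1})$ on $(0,\infty)$ (again comparing exponents on $(0,1]$ and on $[1,\infty)$). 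Hence the derivative bound of \eqref{eq:qphi} also holds, so \eqref{PH} and \eqref{BBN} together furnish both \eqref{eq:qphi} and \eqref{eq:qphi2}.

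Given this, I would invoke Proposition~\ref{thm:beta}(1) to conclude that $\beta=\beta_{q,\phi}$ satisfies $(\beta1)$--$(\beta3)$ for the stated $k,\theta,\aa_1,\aa_2$; since \eqref{**'} supplies $k>d/2$ and $(d+\aa)^{d'}>2\aa_1$, Proposition~\ref{prp:Vqh} then gives $\LL_\M(\{\beta\in\R\})=1$, $0<\LL_\M(\e^{-\beta})<\infty$ (so $\e^{-\beta}\in L^1(\mac,\LL_\M)$), and the closability of $\EE_\M^\beta$ from \eqref{eq:EEM} in $L^2(\mac,\LL_\M^\beta)$ with local quasi-regular closure. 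Next, Proposition~\ref{thm:beta}(2), which uses \eqref{eq:qphi2}, gives $\beta\in\D(\EE_\M)$ with $\De\beta(\mu)=q'\circ h_\mu+\phi$ for $\LL_\M$-a.e.~$\mu$; in particular, since $\beta\in\D(\EE_\M)$ forces $\int_\mac\|\De\beta(\mu)\|_{L^2(\mu)}^2\,\d\LL_\M<\infty$, the drift $q'(h_\mu)+\phi$ is a well-defined element of $L^2(\mu)$ for $\LL_\M$-a.e.~$\mu$, as asserted.

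Finally, with $\beta\in\D(\EE_\M)$, $\e^{-\beta}\in L^1(\mac,\LL_\M)$, and $\EE_\M^\beta$ local quasi-regular, the hypotheses of Proposition~\ref{NP}(1) are met, so the diffusion associated with the closure of $\EE_\M^\beta$ via Corollary~\ref{cor:C1} is a martingale solution of \eqref{eq:gradFl}; substituting the identification $\De\beta(\mu)=q'(h_\mu)+\phi$ turns \eqref{eq:gradFl} into the displayed SDE. The step I expect to require the most care is the reconciliation of the two forms of the derivative bound---tracking how $q'$ evaluated at $h_\mu=f^2$ (hence the natural appearance of $q'(s^2)$ in \eqref{eq:qphi2}) controls the $L^2(\mu)$-norm of the drift---together with verifying that the chosen parameters $k,\aa,d',\theta$ simultaneously meet the constraints imposed by both Proposition~\ref{prp:Vqh} and Proposition~\ref{thm:beta}; everything else is a direct citation.
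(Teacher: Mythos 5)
Your proposal is correct and follows essentially the same route as the paper, whose entire proof is the remark that Corollary \ref{cor:C2} follows by combining Propositions \ref{NP}, \ref{prp:Vqh} and \ref{thm:beta}. Your extra work reconciling \eqref{BBN} with \eqref{eq:qphi} and \eqref{eq:qphi2} (in particular noting that the bound on $q'(s^2)$ implies the bound on $q'(s)$ after the substitution $t=s^2$ and comparison of exponents on $(0,1]$ and $[1,\infty)$) is a valid and worthwhile elaboration of a step the paper leaves implicit.
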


\begin{exa}[{\bf Stochastic Entropy Extrinsic Derivative Flow}] \label{exa:entropy}   It is easy to see
	\begin{equation*}
		\lim_{s\downarrow 0}s^2\ln(s^2)=0,\quad\Big|\ff{\d}{\d  s}\big(s^2\ln(s^2)\big)\big|_{s=s_0}\Big| \leq 2\quad\text{for } s_0\in(0,1),
	\end{equation*}
	and hence $-s\ln(s)\leq 2\sqrt{s}$, $s\in(0,1)$. Defining $q(s):=s\ln(s)$, $s\in(0,\infty)$, and $q(0):=0$, it follows
	$-2\sqrt{s}\leq q(s)\leq s\ln(s\lor 1)$, $s\in[0,\infty)$ and by $q'=\ln(\cdot)+1$,
	\begin{equation*}
		\sup_{s>0 } \ff {|q'(s)|+|q'(s^2)|}{\big(s^{-\ff 1 2 }+\ln(s\lor 1)\big)}<\infty.
	\end{equation*}
	So, by Proposition \ref{thm:beta}, the entropy functional
	\begin{equation*}
		 \en(\mu):=\beg{cases} \int_{\R^d}\big(\ln(h_\mu)-1\big)\d\mu,\ &\text{if}\ \ln h_\mu\in L^1(\mu),\\
	\infty,\ &\text{otherwise}\end{cases}  \end{equation*}
	satisfies $(\beta1)$-$(\beta3), \e^{-\bb}\in L^1(\mac,\LL_\M)$ and $\en \in \D(\EE_\M)$
	with
	$$ \De (\en) (\mu)= \ln (h_\mu).$$
	Moreover,  Corollary \ref{cor:C2} provides a solution to the  following SDE on $H=L^2(\R^d,\d x)$:
	\begin{equation*}
		\d\mu_t= - \De(\en ) (\mu_t) \d t +\d R_t^\M=-  \ln (h_{\mu_t})\d t+\d R_t^\M,\quad t\ge 0,
	\end{equation*}
	in the  sense of Definition \ref{defn:martingaleSol}.
\end{exa}

\subsection{Stochastic extrinsic derivative  flows on $\pac$ over $\R^d$}

 Again  let $M=\R^d$ and $\lll $ be   the Lebesgue measure.  Let  $H_{k,\aa}$ and $G_{k,\aa}$ for constants $k,\aa>0$ are introduced in the last subsection.

  We intend to solve the following stochastic extrinsic derivative flow on $\pac$:
 \beq\label{N4} \d\mu_t = - \Det \bb(\mu_t) \d t +\d\tt R_t^\scr P,\ \ t\ge 0,\end{equation}
 where $\tt R_t$ is introduced in the second part of  Subsection 4.1 for $G= G_{k,\aa}$, so that \eqref{LLP} becomes
 \beq\label{LL0} \LL_0:= \ff{\|\cdot\|^pG_{k,\aa}}{G_{k,\aa}(\|\cdot\|^p)}.\end{equation}

 We consider the class of potentials $\bb_{q,\phi}$ defined  in \eqref{eq:betaq} for $\mu\in \pac.$ To solve \eqref{N4}, we make the following assumption.

 \beg{enumerate} \item[{\bf (A)}] Let $q\in C^1((0,\infty))\cap C([0,\infty)), \phi\in C(\R^d),$  $k>\ff d 2$ and   $\theta \in (1,\ff d{(d-2)^+}]\cap (1,\infty)$. Take $p\ge 4\theta$ in \eqref{LL0}.
  There exists a constant $c>0$ such that
  \beg{align*} &|\phi|\le c (1+|\cdot|^k),\\
		&-c\big(1+  s^{\ff 1 2}\big) \leq q(s)\leq c\big(1+ s^{\theta} \big),\quad s\in[0,\infty),\\
 & \sup_{s>0 } \ff {|q'(s^2)|}{ s^{-\ff 1 2}+ s^{\theta-1} }<\infty.
 \end{align*}
 \end{enumerate}

  \beg{thm}\label{TNN} Assume {\bf (A)}. Let $\LL_0$ be in $\eqref{LL0}.$ If $\e^{-\bb}$ is bounded on $\pac$, then the bilinear form  $(\EE_\scr P^\bb, C_b^{E,1}(\scr P))$ defined in Proposition \ref{NP'}
  for
  $$\LL_{\scr P}^\bb:= \ff{\e^{-\bb} \LL_0}{\LL_0(\e^{-\bb})}$$
  is closable in $L^2(\pac, \LL_\scr P^\bb)$, and its closure
is a local quasi-regular   Dirichlet form.  Moreover, the associated diffusion process via Corollary \ref{cor:C1} provides a martingale solution to \eqref{N4}.\end{thm}

\beg{proof}
By Proposition \ref{thm:beta}(1), {\bf (A)} implies that $\bb=\bb_{q,\phi}$ satisfies $(\bb 1)$-$(\bb 3)$. Noting that the condition on $\aa$ in \eqref{**} is  only used to prove the integrability of $\e^{-\bb}$.
So, if we further assume  that $\e^{-\bb}$ is bounded,  then by the same argument as in the proof of Proposition \ref{prp:Vqh}, but use   Proposition \ref{NP'}  in place of  Proposition \ref{NP},
we conclude  that $(\EE_\scr P^\bb, C_b^{E,1}(\scr P))$ is closable in $L^2(\pac, \LL_\scr P^\bb)$, and its closure
is a local quasi-regular  Dirichlet form.

So, by Proposition \ref{NP'}, it suffices to verify that $\bb\in \D(\EE_\scr P)$, since this  together with the boundedness of $\e^{-\bb} $ yields $\e^{-\bb}\in \D(\EE_\scr P).$
This can be done by using the approximation argument   in the proof of Proposition \ref{thm:beta},  explained as follows.

(a) Let $a< b\in\R^d$ and  $c\in \R$. It is easy to see that $\bb_{abc}$ defined for $\mu\in \scr P$ is extrinsically differentiable with
$$\Det\bb_{abc}(\mu)= \Big(1_{[a,b)} -\mu\big([a,b)\big)\Big) \Big[q'\Big(\ff{\mu([a,b))}{\lam([a,b))}\Big)+c\Big].$$
By approximating $1_{[a,b)}$ with  $\tau_\vv$ as in the proof of Proposition \ref{thm:beta}, we see that $\bb_{abc}\in \D(\EE_\scr P)$.

(b) Let $a< b\in\R^d$, $q\in C_b^1(\R)$ and $\phi\in C_b(\R^d).$  Then we claim that
	\begin{equation*}
		\beta(\mu):=\int_{[a,b)}q\circ h_\mu\d\lam+\int_{[a,b)}\phi\d\mu,\quad \mu\in\pac,
	\end{equation*}
	is a member of $\D(\EE_{\scr P})$ and
	\begin{equation*}
		 \Det\beta (\mu)=1_{[a,b)}\big[q'(h_\mu)+\phi \big]- \mu\Big(1_{[a,b)}\big[q'(h_\mu)+\phi \big]\Big).	\end{equation*}
  Indeed,    the function  $\eta_n$ defined in \eqref{BN1} can be formulated as
$$\bb_n(\mu)= \sum_{j=1}^{n^d} \bigg[\int_{Q_j^n} q\Big(\ff{\lll(1_{Q_j^n}h_\mu)}{\lll(Q_j^n)}\Big) \d\lll +\lll\big(1_{Q_j^n}\phi\big)\bigg].$$
So, by step (a),  we have $\bb_n\in  \D(\EE_{\scr P})$ and
	\begin{align*}
		& \Det\beta_n (\mu)=\De \bb_n(\mu)- \mu\big(\De \bb_n(\mu)\big),\\
		&\De \bb_n(\mu)= \sum_{j=1}^{n^d} 1_{Q_j^n} \bigg[q'\Big(\ff{\lll(1_{Q_j^n}h_\mu)}{\lll(Q_j^n)} \Big)  +\ff{\lll(1_{Q_j^n}\phi)}{\lll(Q_j^n)}  \bigg].
		 	\end{align*}
Since $\|q'\|+|\phi|$ is bounded, we have
$$ \sup_{n\ge 1, \mu\in \pac} \|\Det\beta_n (\mu)\|_\infty<\infty.$$
So, by the dominated convergence theorem, we conclude   that
 $$
	\lim_{n\to\infty}\int_{\pac} \Big(|\bb_n(\mu)- \bb(\mu)|^2+   |\Det\bb_n (\mu)-\Det \bb (\mu)\|_{L^2(\mu)}^2\Big) \d\LL_{\scr P}(\mu)=0.$$
Thus, $\bb\in \D(\EE_{\scr P}).$ 	

(c) Let $q$, $\phi$ and $\beta$ be as in the assumptions of this theorem.   Let $\bb_n$ be defined in \eqref{BBN0}.   By step (b),  we have $\beta_n\in  \D(\EE_\scr P)$ and
\beg{align*}  \Det \bb_n(\mu)= 1_{[-n,n)^d} \big[q_n'(h_\mu) +\phi_n\big]- \mu\Big(1_{[-n,n)^d} \big[q_n'(h_\mu) +\phi_n\big]\Big).\end{align*}
By {\bf (A)},  we find a constant $c_1>0$ such that
\beg{align*} & \|q_n'(h_\mu)\|_{L^2(\mu)}+ \|\phi_n\|_{L^2(\mu)}^2+ |\bb_n(\mu)|^2\\
&\le c_1 \mu\big(h_\mu^{-\ff 1 2}+ 1+|\cdot|^{2k}\big)+ c_1 \big(\mu(h_\mu^{\theta-1})\big)^2=:F(\mu).\end{align*}
Combining this with \eqref{re:estimate1}, \eqref{eq:H0L1}, \eqref{eq:SobEmb}  and $p\ge 4\theta$ in \eqref{LL0},  we find a constant $c_2>0$ such that
\beg{align*} &\LL_\scr P\big(F\big) \\
&= c_1 \int_H \bigg[\lll\bigg(\ff{f^2}{\lll(f^2)} \cdot \Big(\ff{f^2}{\lll(f^2)}\Big)^{-\ff 1 2} + \ff{f^2}{\lll(f^2)}\big(1+|\cdot|^{2k}\big)\bigg)
+\Big\{\lll\Big(\ff{f^{2\theta}}{\lll(f^2)^{\theta}}\Big\}^2\bigg]\d\LL_0(f)\\
&=\ff{c_1}{G_{k,\aa}(\|\cdot\|_H^p)}  \int_H \bigg[\ff{\lll(|f|)+\lll(f^2(1+|\cdot|^{2k}))}{\|f\|_H} + \ff{[\lll(f^{2\theta})]^2}{\|f\|_H^{4\theta}}\bigg]\|f\|_H^p\d G_{k,\aa}(f)\\
&\le c_2  \int_H \Big(\|f\|_{k,\aa}^p+\|f\|_{k,\aa}^{p+1}\Big) \d G_{k,\aa}(f)<\infty.\end{align*}
 Then, as in the proof of Proposition  \ref{thm:beta}, we may apply the dominated convergence theorem
 to get
$$\lim_{n\to\infty}  \int_{\pac} \Big(|\bb_n(\mu)-\bb(\mu)|^2+ 	\Big\|\Det \bb_n(\mu)-\big[q'(h_\mu)+ \phi- \mu(q'(h_\mu)+ \phi)\big]\Big\|_{L^2(\mu)}^2\Big)\d\LL_{\scr P}(\mu)=0,
 $$
so that  $\bb\in \D(\EE_\scr P)$ with
 $$\Det \bb (\mu)= q'(h_\mu)+ \phi - \mu\big(q'(h_\mu)+ \phi\big).$$ \end{proof}

\begin{exa} [ {\bf Stochastic Entropy-Extrinsic-Derivative Flow}] \label{4.9} Let $\phi\in C(\R^d)$ such that $|\phi|\le c(1+|\cdot|^k)$ holds for some constant $c>0$ and
$$c_\phi:= \int_{\R^d}\e^{-\phi(x)}\d x<\infty.$$
We consider the entropy functional with respect to $\mu_\phi(\d x):=\e^{-\phi(x)}\d x$:
$$\bb(\mu)=\en_{\mu_\phi}(\mu):= \int_{\R^d} \ln \big(h_\mu\e^{\phi}\big)\d\mu,\ \ \mu\in \pac.$$
Then $\bb=\bb_{q,\phi} $ for $q(s)=s\ln(s),$ and {\bf (A)} holds for any $\theta \in \big(1,\ff{d}{(d-2)^+}\big]\cap (1,\infty).$

Moreover,  by Jensen's inequality,
\beg{align*}&\bb(\mu)= c_\phi \int_{\R^d} \Big[h_\mu\e^{\phi} \ln \big(h_\mu\e^{\phi}\big)\Big]\ff{\d\mu_\phi}{c_\phi}  \\
&\ge c_\phi \bigg(\int_{\R^d} (h_\mu\e^{\phi})\ff{\d\mu_\phi}{c_\phi} \bigg)\ln \int_{\R^d} (h_\mu\e^{\phi})\ff{\d\mu_\phi}{c_\phi}\\
&= - \ln c_\phi >-\infty.\end{align*}
Thus, $\e^{-\bb}$ is bounded, so that by Theorem \ref{TNN},  the following SDE on $\pac$ has a martingale solution:
$$\d\mu_t = -\big(\Det\en_{\mu_\phi}\big) (\mu_t) \d t+ \d\tt R_t^\scr P,\ \ t\ge 0.$$
 \end{exa}


\begin{thebibliography}{999}



\bibitem{AKR}Albeverio, S., Kondratiev, Y., R\"ockner, M. Differential geometry of Poisson spaces. {\em C. R. Acad. Sci. Paris S\'er. I Math.} \textbf{323}, 1129-1134 (1996)
\bibitem{AM}Albeverio, S., Ma, Z. A general correspondence between Dirichlet forms and right processes. {\em Bull. Amer. Math. Soc.}  \textbf{26}, 245-252 (1992)
\bibitem{AMR15}Albeverio, S., Ma, Z., R\"ockner, M.   Quasi regular Dirichlet forms and the stochastic quantization problem. In: Festschrift Masatoshi Fukushima, Chapter 3,
27-58, World Scientific, 2015
\bibitem{AR}Albeverio, S., R\"ockner, M. Classical Dirichlet forms on topological vector spaces $-$ Closability and a Cameron-Martin formula. {\em J. Funct. Anal.}  \textbf{88}, 395-436 (1990)
\bibitem{BRW}Bao, J., Ren, P., Wang, F. Bismut formula for Lions derivative of distribution-path dependent SDEs. {\em J. Differ. Equ.} \textbf{282}, 285-329 (2021)
\bibitem{BK10}Blount, D., Kouritzin, M. On convergence determining and separating classes of functions. {\em Stoch. Proc. Appl.} \textbf{120}, 1898-1907 (2010)
\bibitem{BH91}Bouleau, N., Hirsch, F. Dirichlet Forms and Analysis on Wiener Space. Walter de Gruyter, Berlin, 1991
\bibitem{CMR}Chen, Z., Ma, Z., R\"ockner, M. Quasi-homeomorphisms of Dirichlet forms. {\em Nagoya Math. J.} \textbf{136}, 1-15 (1994)
\bibitem{DZ} Da Prato, G., Zabczyk, J. Second Order Partial Differential Equations in Hilbert Spaces. Cambridge University Press, 2002
\bibitem{Sch}Dello Schiavo, L. The Dirichlet-Ferguson diffusion on the space of probability measures over a closed Riemannian manifold. {\em Ann. Probab.} \textbf{50}, 591-648 (2022)
\bibitem{Eb} Eberle, A. Girsanov-type transformations of local Dirichlet forms: an analytic approach. {\em Osaka J. Math.} \textbf{33}, 497-531 (1996)
\bibitem{Fu}Fukushima, M. Dirichlet spaces and strong Markov processes. {\em Trans. Amer. Math. Soc.}  \textbf{162}, 185-224 (1971)
\bibitem{FOT11}Fukushima, M., Oshima, Y.~Takeda, M. Dirichlet forms and symmetric Markov processes. Walter de Gruyter \& Co., Berlin, 2011
\bibitem{HKPS}Hida, T., Kuo, H., Potthoff, J., Streit, L. White Noise. Springer Science+Business Media, Dordrecht, 1993
\bibitem{KLV}Kondratiev, Y., Lytvynov, E.~Vershik, A. Laplace operators on the cone of Radon measures. {\em J. Funct. Anal.} \textbf{269}, 2947-2976 (2015)
\bibitem{MR92}Ma, Z., R\"ockner, M. Introduction to the theory of (nonsymmetric) Dirichlet forms. Springer-Verlag, Berlin, 1992
\bibitem{Maz}Maz'ya, V. Sobolev Spaces. 2nd edition. Springer, Heidelberg, 2011.
\bibitem{ORS}Overbeck, L., R\"ockner, M., Schmuland, B. An analytic approach to Fleming-Viot processes with interactive selection. {\em Ann. Probab.}  \textbf{23}, 1-36 (1995)
\bibitem{RW20}Ren, P., Wang, F.-Y. Spectral gap for measure-valued diffusion processes. {\em J. Math. Anal. Appl.} \textbf{483}, 123624 (2020)
\bibitem{RW21}Ren, P.,   Wang, F.-Y., Derivative formulas in measure on Riemannian manifolds. {\em Bull. London Math. Soc.} \textbf{53}, 17861800 (2021)
\bibitem{RW22}Ren, P., Wang, F.-Y. Ornstein-Uhlenbeck type processes on Wasserstein spaces. {\em Stoch. Proc. Appl.} \textbf{172}, 104339 (2024)
\bibitem{RWW24} Ren, P.,   Wang, F.-Y., Wittmann, S., Diffusion Processes  on  $p$-Wasserstein Space over Banach Space, Preprint,    arXiv:2402.15130.
\bibitem{Sturm}Renesse, M., Sturm, K.,  Entropic measure and Wasserstein diffusion. {\em Ann. Probab.}\textbf{37}, 1114-1191 (2009)
\bibitem{ST} Sturm, K., Wasserstein diffusion on multidimensional spaces, {\em arXiv:2401.12721}
\bibitem{Shao}Shao, J. A new probability measure-valued stochastic process with Ferguson-Dirichlet process as reversible measure. {\em Electron. J. Probab.} \textbf{16} pp. no. 9, 271-292 (2011)

\end{thebibliography}
\end{document}